\newtheorem{theorem}{Theorem}
\newtheorem{lemma}{Lemma}
\newtheorem{proposition}{Proposition}
\newtheorem{remark}{Remark}
\numberwithin{equation}{section}
\numberwithin{theorem}{section}
\numberwithin{lemma}{section}
\numberwithin{proposition}{section}
\numberwithin{corollary}{section}
\numberwithin{remark}{section}
\begin{document}
\title{Asymptotic structure of steady Stokes flow \\
around a rotating obstacle in two dimensions}
\author{Toshiaki Hishida\thanks
{Dedicated to Professor Reinhard Farwig on his 60th birthday}}
\date{Graduate School of Mathematics, Nagoya University \\
Nagoya 464-8602 Japan \\
{\tt hishida@math.nagoya-u.ac.jp}}
\maketitle
\begin{abstract}
This paper provides asymptotic structure at spatial infinity of 
plane steady Stokes flow in exterior domains when the obstacle is rotating
with constant angular velocity.
The result shows that there is no longer Stokes' paradox due to the
rotating effect.
\end{abstract}

\section{Introduction}\label{intro}

Let $\Omega$ be an exterior domain in the plane $\mathbb R^2$ with
smooth boundary $\partial\Omega$, and consider the motion of
a viscous incompressible fluid around an obstacle (rigid body)
$\mathbb R^2\setminus\Omega$.
As compared with 3D problem,
we have less knowledge about exterior steady flows in 2D
despite efforts of several authors mentioned below.
The difficulty is to analyze the asymptotic behavior of the flow 
at infinity.
This is related to the following hydrodynamical paradox found 
by Stokes (1851):
The problem
\begin{equation}
-\Delta u+\nabla p=0, \qquad\mbox{div $u$}=0 \qquad\mbox{in $\Omega$},
\label{stokes}
\end{equation}
\begin{equation}
u|_{\partial\Omega}=0, \qquad u\to u_\infty\quad\mbox{as $|x|\to\infty$}
\label{bc}
\end{equation}
admits no solution unless $u_\infty=0$, where
$u(x)=(u_1,u_2)^T$ and $p(x)$ denote the velocity and pressure,
respectively, of the fluid.
Throughout this paper, all vectors are column ones and
$(\cdot)^T$ denotes the transpose of vectors or matrices.
Later on, Chang and Finn \cite{CF} made it clear that the Stokes paradox
is interpreted in terms of the total net force exerted by the fluid
to the obstacle
\begin{equation}
N:=\int_{\partial\Omega}T(u,p)\nu\,d\sigma,
\label{net}
\end{equation}
where $T(u,p)$ is the Cauchy stress tensor given by
\begin{equation}
T(u,p):=\big(T_{jk}(u,p)\big)=Du-p\mathbb I, \quad
Du:=\nabla u+(\nabla u)^T, \quad \mathbb I=\left(\delta_{jk}\right),
\label{stress}
\end{equation}
and $\nu$ denotes the outward unit normal to $\partial\Omega$;
in fact,
they proved that the flow satisfying \eqref{stokes} can be bounded
at infinity only if the net force \eqref{net} vanishes.
This is an immediate consequence of asymptotic representation
at infinity of solutions to \eqref{stokes} due to themselves \cite{CF},
see \eqref{stokes-expan}.
The original form of the Stokes paradox mentioned above follows from
the result of Chang and Finn as a corollary
because the net force \eqref{net} never vanishes provided that
$\{u,p\}$ is nontrivial and satisfies \eqref{stokes} together with
$u|_{\partial\Omega}=0$.
There are some other forms of the Stokes paradox, see
Galdi \cite[V.7]{Ga-b}, Kozono and Sohr \cite[Theorem A]{KS}.

For the case in which a constant velocity 
$u_\infty\in\mathbb R^2\setminus\{0\}$ is prescribed at infinity
or, equivalently, the obstacle is translating with velocity $-u_\infty$
(while the flow is at rest at infinity),
Oseen (1910) proposed his linearization of 
the Navier-Stokes system around $u_\infty$ to get around the Stokes paradox.
This works well because the fundamental solution
of the Oseen operator
$-\Delta u+u_\infty\cdot\nabla u+\nabla p$
possesses some decay structure with wake,
while the Stokes fundamental solution grows logarithmically at infinity,
see \eqref{stokes-funda}.
Finn and Smith \cite{FS1}, \cite{FS2}, \cite{Sm} actually adopted 
the Oseen linearization to succeed in construction of the Navier-Stokes flow
when $u_\infty$ is not zero but small enough (and the external force is small,
too, unless it is absent).
Later on, Galdi \cite{Ga93} refined the result
by means of $L^q$-estimates, see also \cite[Section XII.5]{Ga-b}.
The similar existence theorem for the case 
$u_\infty=0$ is still an open question
even for small external force.
Even before the results mentioned above,
Leray \cite{L} constructed at least one Navier-Stokes flow
with finite Dirichlet integral
without any smallness condition, however,
the asymptotic behavior at infinity of his solution
is still unclear
and all the related results obtained so far are partial answers
(Gilbarg and Weinberger \cite{GW1}, \cite{GW2} and Amick \cite{A}, \cite{A2}).
For details, see Galdi \cite{Ga}, \cite{Ga-3} and \cite{Ga-b}.
It should be noted that symmetry helps to attain
the boundary condition
$u\to 0$ at infinity,
see \cite{Ga-3}, \cite{Y}, \cite{PiRu} and the references therein.  
Among them, Yamazaki \cite{Y} employed a linearization method to
construct a small Navier-Stokes flow
decaying like $|x|^{-1}$ at infinity under a sort of symmetry;
indeed, the symmetry he adopted enables us to avoid the Stokes paradox
since the net force vanishes.

In this paper it is shown that, instead of the translation
mentioned above,
the rotation of the obstacle leads to the resolution of the
Stokes paradox in the sense that:
(i) The flow can be bounded 
(and even goes to a constant vector at the rate $|x|^{-1}$)
at infinity even if the net force
\eqref{net} does not vanish (Theorem \ref{decay});
(ii) Given external force decaying sufficiently fast,
there exists a linear flow
which enjoys $u(x)=O(|x|^{-1})$ as $|x|\to\infty$ (Theorem \ref{existence}).
We also provide a remarkable asymptotic representation of the flow
at infinity,
in which the leading term involves the rotational profile
$x^\perp/|x|^2$ whose coefficient is given by the torque,
where $x^\perp=(-x_2,x_1)^T$,
see \eqref{asym-rep} and \eqref{rot-u-expan}.
Here, the linear system arising from the flow around a rotating
obstacle with constant angular velocity 
$a\in\mathbb R\setminus\{0\}$
is described as
\begin{equation}
-\Delta u-a\left(x^\perp\cdot\nabla u-u^\perp\right)+\nabla p=f, \qquad
\mbox{div $u$}=0 \qquad\mbox{in $\Omega$}
\label{rot-stokes}
\end{equation}
in the reference frame attached to the obstacle.
We recall the derivation of \eqref{rot-stokes} in the next section.

The essential reason why there is no longer Stokes' paradox is 
asymptotic structure of the fundamental solution of the system
\eqref{rot-stokes} in the whole plane $\mathbb R^2$.
Roughly speaking, the oscillation caused by rotation yields better
decay structure of the fundamental solution, from which
combined with some cut-off techniques we obtain the main results.
It is worth while comparing with the result \cite{FH1} 
by Farwig and the present author on the 3D Stokes flow around
a rotating obstacle, in which the axis of rotation
($e_3$-axis without loss, where $e_3=(0,0,1)^T$)
plays an important role;
in fact, $e_3\cdot N$ controls the rate of decay.
The result would suggest better decay studied in this paper
since we do not have the axis of rotation in 2D, however,
there are some difficulties compared with 3D case.
Look at \eqref{fundamental} below,
which would be heuristically a fundamental solution,
but this is by no means trivial because of lack of absolute convergence
unlike 3D case.
We thus employ the centering technique due to Guenther and Thomann \cite{GT},
that is, we subtract the worst part, 
whose time-integral converges on account of
oscillation, from the integrand of \eqref{fundamental} such that the remaining part
converges absolutely and can be treated in a usual way.
This technique is also needed to justify some estimates of 
the fundamental solution, see Lemma \ref{loc-sum}.
Asymptotic analysis of the fundamental solution
to find the asymptotic representation \eqref{asym-funda}
is similar to the argumant adopted for 3D (\cite{FH1}),
in which, however, the external force
$f$ is assumed to have a compact support.
In this paper we will derive further properties of the fundamental solution
and the corresponding volume potential \eqref{vol}
to deal with the external force decaying sufficiently fast for
$|x|\to\infty$, see \eqref{f-cond} and \eqref{f-cond-2}.

This paper is a step toward analysis of
the Navier-Stokes flow around a rotating obstacle in the plane.
To proceed to the nonlinear case, 
it is reasonable to consider the external force
$f=\mbox{div $F$}$ with $F(x)=O(|x|^{-2})$
in view of the nonlinear structure
$u\cdot\nabla u=\mbox{div $(u\otimes u)$}$, see Remark \ref{rem-grad}.
This will be discussed in a forthcoming paper.
As for asymptotic structure of the Navier-Stokes flow
around a rotating obstacle in 3D, the leading term
at infinity was found first by \cite{FH2} and then 
the estimate of the remainder was refined by \cite{FGK}.

When the rotating obstacle is the two-dimensional disk 
and the external force is absent,
the Navier-Stokes system subject to the no-slip boundary condition
\eqref{noslip} admits an explicit solution
\eqref{yamazaki-ns} in the original frame, see Galdi \cite[p.302]{Ga-b}.
Recently, Hillairet and Wittwer \cite{HW} considered small perturbation
from this solution with large angular velocity $|a|$ to find the Navier-Stokes
flow decaying like $|y|^{-1}$ at infinity,
whose leading profile is given by $y^\perp/|y|^2$.
See also Guillod and Wittwer \cite[Section 4]{GW},
who provided among others numerical simulations of the related issue.

This paper is organized as follows.
In the next section, after recalling the equations in the reference frame,
we present the main theorems.
Section 3 is essentially the central part of this paper and
we carry out a detailed analysis of several asymptotic properties of the 
fundamental solution of \eqref{rot-stokes} in the whole plane $\mathbb R^2$.
Section 4 is devoted to decay structure of the system \eqref{rot-stokes}
to prove Theorem \ref{decay}.
In the final section we show the existence of a unique linear flow 
which goes to zero as $|x|\to\infty$ to prove Theorem \ref{existence}.

\section{Results}\label{result}

We begin with introducing notation.
Set
$B_\rho(x_0)=\{x\in\mathbb R^2; |x-x_0|<\rho\}$, where
$x_0\in\mathbb R^2$ and $\rho >0$.
Given exterior domain $\Omega$ with smooth boundary $\partial\Omega$,
we fix $R\geq 1$ such that
$\mathbb R^2\setminus \Omega\subset B_R(0)$.
For $\rho\geq R$ we set 
$\Omega_\rho=\Omega\cap B_\rho(0)$.
Let $D$ be one of $\Omega, \mathbb R^2$ and $\Omega_\rho$, 
and let $1\leq q\leq \infty$.
We denote by $L^q(D)$ the usual Lebesgue space with norm 
$\|\cdot\|_{L^q(D)}$.
It is also convenient to introduce the weak-$L^2$ space
$L^{2,\infty}(D)$ (one of the Lorentz spaces, see \cite{BL}) by
$L^{2,\infty}(D)=\left(L^1(D), L^\infty(D)\right)_{1/2,\infty}$
with norm $\|\cdot\|_{L^{2,\infty}(D)}$,
where $(\cdot,\cdot)_{1/2,\infty}$ stands for the real interpolation functor.
The measurable function $f$ belongs to $L^{2,\infty}(D)$ if and only if
$\sup_{\tau>0}\tau|\{x\in D; |f(x)|>\tau\}|^{1/2}<\infty$,
where $|\cdot|$ denotes the Lebesgue measure.
Note that $L^2(D)\subset L^{2,\infty}(D)$; indeed,
$|x|^{-1}\in L^{2,\infty}(D)$.
By $H^k(D)$ and $H_0^1(D)$ we respectively denote the 
$L^2$-Sobolev space of $k$-th order ($k\geq 1$) with norm
$\|\cdot\|_{H^k(D)}$
and the completion of $C_0^\infty(D)$
(consisting of smooth functions with compact support)
in $H^1(D)$.
We use the same symbol for denoting the spaces
of scalar, vector and tensor valued functions.

Before stating our results, we briefly explain the derivation of the
system \eqref{rot-stokes} for the readers' convenience
although that is the same as in 3D case (\cite{Ga-2}, \cite{H}).
Suppose a compact obstacle (rigid body)
$\mathbb R^2\setminus\Omega$ is rotating about the origin in the
plane with constant angular velocity
$a\in\mathbb R\setminus\{0\}$,
and let us start with the nonstationary Navier-Stokes system
\[
\partial_tv+v\cdot\nabla v=\Delta v-\nabla q+g, \qquad
\mbox{div $v$}=0
\]
in the time-dependent exterior domain
$\Omega(t)=\{y=O(at)x;\, x\in\Omega\}$ with
\[
O(t)=\left(
\begin{array}{cc}
\cos t & -\sin t \\
\sin t & \cos t
\end{array}
\right),
\]
where $v(y,t)$ and $q(y,t)$ are unknowns, while $g(y,t)$ is a given
external force.
The fluid velocity is assumed to attain the rotational velocity
of the rigid body
on the boundary $\partial\Omega(t)$ (no-slip condition),
while it is at rest at infinity, that is,
\[
v|_{\partial \Omega(t)}=ay^\perp, \qquad
v\to 0 \quad\mbox{as $|y|\to\infty$}.
\]
We take the frame attached to the obstacle by making change of variables
\begin{equation}
\begin{split}
&y=O(at)x, \qquad
u(x,t)=O(at)^Tv(y,t), \qquad
p(x,t)=q(y,t), \\
&f(x,t)=O(at)^Tg(y,t),
\end{split}
\label{transform}
\end{equation}
so that the equation of momentum is reduced to 
\begin{equation*}
\begin{split}
\partial_tu
&=O(at)^T\partial_tv
+O(at)^T\left(a\,\dot{O}(at)x\right)\cdot\nabla_yv
+a\,\dot{O}(at)^Tv \\
&=O(at)^T(-v\cdot\nabla_y v+\Delta_yv-\nabla_yq+g)
+a\left(x^\perp\cdot\nabla_xu-u^\perp\right) \\
&=-u\cdot\nabla_x u+\Delta_xu-\nabla_xp+f
+a\left(x^\perp\cdot\nabla_xu-u^\perp\right)
\end{split}
\end{equation*}
in $\Omega$,
where $\dot{O}(t)=\frac{d}{dt}O(t)$.
If $f$ is independent of $t$, then one can consider the steady problem
\begin{equation}
-\Delta u-a\left(x^\perp\cdot\nabla u-u^\perp\right)+\nabla p+u\cdot\nabla u=f,
\qquad\mbox{div $u$}=0 \qquad\mbox{in $\Omega$}
\label{rot-ns}
\end{equation}
subject to
\begin{equation}
u|_{\partial\Omega}=ax^\perp, \qquad u\to 0\quad\mbox{as $|x|\to\infty$}.
\label{rot-bc}
\end{equation}
It is sometimes convenient to write the LHS of $\mbox{\eqref{rot-ns}}_1$
as divergence form
\begin{equation*}
\begin{split}
&\quad \Delta u+a\left(x^\perp\cdot\nabla u-u^\perp\right)-\nabla p
-u\cdot\nabla u  \\
&=\mbox{div $\big(S(u,p)-u\otimes u\big)$}
=\left(\sum_{k=1}^2\partial_k
\left\{S_{jk}(u,p)-u_ju_k\right\}\right)_{j=1,2}
\end{split}
\end{equation*}
with
\begin{equation}
S(u,p)=\big(S_{jk}(u,p)\big)=
T(u,p)+a\left(u\otimes x^\perp-x^\perp\otimes u\right)
\label{new-stress}
\end{equation}
where $T(u,p)$ is given by \eqref{stress},
$u\otimes v=(u_jv_k)$ stands for the matrix for given 
vector fields $u$ and $v$, and $\partial_k=\partial_{x_k}$.

The only problem we intend to address in this paper is
the associated linear system \eqref{rot-stokes}.
On account of the relation 
\begin{equation}
\int_\Omega\left[(x^\perp\cdot\nabla u-u^\perp)\cdot v +u\cdot(x^\perp\cdot\nabla v-v^\perp)\right]\,dx 
=\int_{\partial\Omega}(\nu\cdot x^\perp)(u\cdot v)\,d\sigma  
\label{skew}
\end{equation}
for vector fields $u$ and $v$
(so long as the calculation \eqref{skew-2} in section \ref{proof-1}
makes sense),
the opeartor $u\mapsto x^\perp\cdot\nabla u-u^\perp$
is skew-symmetric under the homogeneous boundary condition.
Also, by using the auxiliary function \eqref{auxi} below,
our problem with boundary condition $\mbox{\eqref{rot-bc}}_1$
can be reduced to the problem with the homogeneous one.
Hence, it is not hard to find at least one solution with
$\nabla u\in L^2(\Omega)$
for \eqref{rot-stokes} 
(even for the Navier-Stokes system \eqref{rot-ns} without restriction
on the size of $|a|$)
subject to the boundary condition
$u|_{\partial\Omega}=ax^\perp$ 
(only $\mbox{\eqref{rot-bc}}_1$)
along the same procedure as in Leray \cite{L}
provided $f=\mbox{div $F$}$ with
$F\in L^2(\Omega)$,
however, we do not know whether the behavior
$\mbox{\eqref{rot-bc}}_2$ at infinity is verified.
The asymptotic structure of this solution 
for $f$ decaying sufficiently fast at infinity
and, more generally,
that of $\{u,p\}$ satisfying \eqref{rot-stokes} without assuming any
boundary condition on $\partial\Omega$ are given by the following theorem.
For simplicity we are concerned with smooth solutions
although the result can be extended to less regular solutions
(in view of Proposition \ref{justify} for the whole plane problem).
\begin{theorem}
Let $a\in\mathbb R\setminus\{0\}$.
Suppose that
$\{u,p\}\in H^1_{loc}(\overline{\Omega})\times L^2_{loc}(\overline{\Omega})$
is a smooth solution to the system \eqref{rot-stokes}
with $f\in C^\infty(\Omega)$ satisfying
\begin{equation}
\int_\Omega |x||f(x)|\,dx<\infty, \qquad
|f(x)|\leq\frac{C}{(1+|x|^3)\big(\log\, (e+|x|)\big)}
\label{f-cond}
\end{equation}
where the constant $C>0$ is independent of $x\in\Omega$.
Assume either

\emph{(i)}
$\nabla u\in L^r(\Omega\setminus B_R(0))$ 
for some $r\in (1,\infty)$

\noindent
or

\emph{(ii)}
$u(x)=o(|x|)$ as $|x|\to\infty$.

\noindent
Then there are constants 
$u_\infty\in\mathbb R^2$ and $p_\infty\in\mathbb R$ such that:
\begin{enumerate}
\item
{\em (asymptotic behavior)}
\begin{equation}
\left\{
\begin{aligned}
&u(x)=u_\infty+ (1+|a|^{-1})\,O(|x|^{-1}), \\
&p(x)=-a\,u_\infty^\perp\cdot x+p_\infty+O(|x|^{-1}),
\end{aligned}
\right.
\label{resolution}
\end{equation}
as $|x|\to\infty$.
\item
{\em (energy balance)}

We have 
$\nabla u\in L^2(\Omega)$
(even if we do not assume \emph{(i)} with $r=2$) and
\begin{equation}
\begin{split}
\frac{1}{2}\int_\Omega |Du|^2dx
&=\int_{\partial\Omega}\left[\big(\widetilde T(u,p)\nu\big)\cdot(u-u_\infty)
+\frac{a\,(\nu\cdot x^\perp)}{2}|u-u_\infty|^2\right]d\sigma  \\
&\qquad +\int_\Omega f\cdot(u-u_\infty)\,dx
\end{split}
\label{energy}
\end{equation}
with
$\widetilde T(u,p):=T(u,\;p+a\,u_\infty^\perp\cdot x-p_\infty)$,
where $Du$ and $T(\cdot,\cdot)$ are as in \eqref{stress}.
\item
{\em (asymptotic representation)}

If in addition
\begin{equation}
f(x)=o(|x|^{-3}(\log |x|)^{-1}) \quad
\mbox{as $|x|\to\infty$},
\label{f-cond-2}
\end{equation}
then
\begin{equation}
u(x)-u_\infty
=\frac{\alpha x^\perp-2\beta x}{4\pi |x|^2}+ (1+|a|^{-1})\,o(|x|^{-1}) \quad
\mbox{as $|x|\to\infty$},
\label{asym-rep}
\end{equation}
where
\begin{equation}
\begin{split}
&\alpha=\int_{\partial\Omega}
y^\perp\cdot\left\{\big(T(u,p)+a\,u\otimes y^\perp\big)\,\nu\right\}\,d\sigma_y
+\int_\Omega y^\perp\cdot f\,dy,  \\
&\beta=\int_{\partial\Omega}\nu\cdot u\,d\sigma.
\end{split}
\label{coe-asy}
\end{equation}
If in particular $f\in C_0^\infty(\overline{\Omega})$,
that is, the support of $f$ is compact in $\mathbb R^2$,
then the remainder decays like $O(|x|^{-2})$ in \eqref{asym-rep}.
\end{enumerate}
\label{decay}
\end{theorem}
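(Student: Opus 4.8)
The plan is to reduce the exterior problem to one on the whole plane $\mathbb R^2$, where the fundamental solution constructed in Section 3 and its asymptotic representation \eqref{asym-funda} are available, and then to read off the behaviour at infinity from the associated volume potential \eqref{vol}. Fix a cut-off $\phi\in C^\infty(\mathbb R^2)$ with $\phi\equiv 0$ on $B_R(0)$ and $\phi\equiv 1$ outside $B_{2R}(0)$, so that $\nabla\phi$ is supported in the annulus $A=B_{2R}(0)\setminus B_R(0)$. Since $\phi u$ is not solenoidal, I would correct it with the Bogovskii operator $\mathbb B_A$ on $A$: setting $v=\phi u-\mathbb B_A[\nabla\phi\cdot u]$ and $\pi=\phi p$ gives $\operatorname{div}v=0$ on $\mathbb R^2$, with $v=u$, $\pi=p$ outside $B_{2R}(0)$, and $\{v,\pi\}$ solves \eqref{rot-stokes} on the whole plane with datum $F=\phi f+g$, where $g=-2\nabla\phi\cdot\nabla u-(\Delta\phi)u-a(x^\perp\cdot\nabla\phi)\,u+p\nabla\phi$ together with the Bogovskii contribution. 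The point is that $\operatorname{supp}g\subset A$ is compact and $g\in L^2$ by the local regularity of $\{u,p\}$.

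Next I would represent $v$ through the potential $U=\Gamma*F$ of \eqref{vol}. The summability estimates of Lemma \ref{loc-sum}, combined with the compact support of $g$ and the decay \eqref{f-cond} of $f$ (whose logarithmic weight is exactly what renders the potential convergent), ensure that $U$ is well defined and solves the whole-plane problem with datum $F$. The difference $v-U$ then satisfies the homogeneous whole-plane system, and under hypothesis (i) $\nabla u\in L^r$ or (ii) $u=o(|x|)$ a Liouville-type argument (in the spirit of Proposition \ref{justify}) forces $v-U$ to reduce to a constant vector. Hence $u=U+\text{const}$ for $|x|\ge 2R$, and the leading part of $\Gamma$ in \eqref{asym-funda}, which is $O(|x|^{-1})$ with an $a$-dependent constant, yields \eqref{resolution}: the velocity tends to some $u_\infty$ at the rate $(1+|a|^{-1})|x|^{-1}$. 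Balancing the Coriolis term $a\,u^\perp\sim a\,u_\infty^\perp$ against $\nabla p$ in \eqref{rot-stokes} far out, where $f$ is negligible, forces $\nabla p\sim -a\,u_\infty^\perp$, so the pressure inherits the linear term $-a\,u_\infty^\perp\cdot x$ plus $p_\infty+O(|x|^{-1})$.

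For part 3, under the stronger decay \eqref{f-cond-2} I would expand $U$ one order further to obtain
\[
U(x)-\text{const}=\frac{\alpha x^\perp-2\beta x}{4\pi|x|^2}+o(|x|^{-1}),
\]
the profiles $x^\perp/|x|^2$ and $x/|x|^2$ being the rotlet and source parts of the leading term of $\Gamma$, with coefficients the total torque $\int_{\mathbb R^2}y^\perp\cdot F\,dy$ and the total flux of $F$. Identifying these moments with the boundary expressions \eqref{coe-asy} is then integration by parts: writing \eqref{rot-stokes} in divergence form $\operatorname{div}S(u,p)=-f$ via \eqref{new-stress}, integrating over $\Omega_\rho$, and using part 1 to discard the integrals over $|x|=\rho$, the Bogovskii term supplies $\beta=\int_{\partial\Omega}\nu\cdot u\,d\sigma$, while contracting the equation with $y^\perp$ produces $\alpha$. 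Compact support of $f$ removes the logarithmic loss and upgrades the remainder to $O(|x|^{-2})$. For the energy balance \eqref{energy} I would test \eqref{rot-stokes} with $u-u_\infty$ on $\Omega_\rho$: expressing the rotation term through $u-u_\infty$ splits off the constant $-a\,u_\infty^\perp$, which is precisely $\nabla(a\,u_\infty^\perp\cdot x)$ and is absorbed into the \emph{modified} stress $\widetilde T(u,p)=T(u,\,p+a\,u_\infty^\perp\cdot x-p_\infty)$; the skew-symmetry relation \eqref{skew} applied to $u-u_\infty$ generates the boundary term $\tfrac{a}{2}(\nu\cdot x^\perp)|u-u_\infty|^2$, and the decay \eqref{resolution} annihilates the integrals over $|x|=\rho$ as $\rho\to\infty$, leaving \eqref{energy} together with $\nabla u\in L^2(\Omega)$.

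The step I expect to be the real obstacle is the whole-plane analysis underlying the second paragraph: justifying convergence of the volume potential at the borderline weight \eqref{f-cond}, which is what forces the centering/oscillation device behind Lemma \ref{loc-sum}, and establishing the Liouville reduction of $v-U$ to a constant under the weak hypotheses (i) or (ii). In two dimensions the logarithmic growth of the Stokes kernel makes both of these delicate, and everything downstream — the constant $u_\infty$, the expansion \eqref{asym-rep}, and the vanishing of the remote boundary terms in \eqref{energy} — rests on them.
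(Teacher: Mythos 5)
Your overall strategy is the same as the paper's: cut off near the obstacle, repair the divergence with the Bogovskii operator, represent the resulting whole-plane solution by the volume potential of $\Gamma_a$, remove polynomial ambiguities by the Fourier-support (Liouville) lemma under (i) or (ii), and read off \eqref{resolution}, \eqref{asym-rep} from \eqref{asy-u}; the energy identity by testing with $u-u_\infty$ also matches. However, there is a genuine gap at the very first step. The Bogovskii operator $\mathbb B_A$ solves $\operatorname{div} w=h$ in $A$ with zero boundary values \emph{only when} $\int_A h\,dx=0$, and for your choice $h=\nabla\phi\cdot u$ the divergence theorem gives
\begin{equation*}
\int_A \nabla\phi\cdot u\,dx=\int_A \operatorname{div}(\phi u)\,dx
=\int_{|x|=2R}\nu\cdot u\,d\sigma=\int_{\partial\Omega}\nu\cdot u\,d\sigma=\beta ,
\end{equation*}
which has no reason to vanish. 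So $v=\phi u-\mathbb B_A[\nabla\phi\cdot u]$ is simply not defined (or not solenoidal) unless $\beta=0$, and your construction proves the theorem only in that special case. This is exactly why the paper inserts a preliminary reduction: it subtracts the flux carrier $w(x)=-\beta\,(x-x_0)/(2\pi|x-x_0|^2)$, $x_0\in\operatorname{int}(\mathbb R^2\setminus\Omega)$, so that $\widetilde u=u-w$ has vanishing flux \eqref{zero-fl} and the Bogovskii correction becomes admissible. The omission is not cosmetic: the profile $-2\beta x/(4\pi|x|^2)$ in \eqref{asym-rep} is produced precisely by this carrier (via \eqref{carrier}), whereas the whole-plane potential built from your datum could only ever generate the rotlet part; your remark that ``the Bogovskii term supplies $\beta$'' cannot be made to work, since the Bogovskii term is supported in $A$ and contributes nothing to the leading term at infinity.

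A secondary, smaller gap is in your pressure argument: to conclude $\nabla p\to -a\,u_\infty^\perp$ from the equation you must know that $x^\perp\cdot\nabla u$ tends to zero in a suitable sense, and the bound $\nabla u=O(|x|^{-1})$ gives only boundedness of that term, not decay. The paper circumvents this by establishing the summability properties \eqref{sum-property1}--\eqref{sum-property2} (namely $\Delta u,\ \nabla p\in L^r$ and $x^\perp\cdot\nabla u\in L^r$ for $r>2$ outside a ball) and then letting $|x|\to\infty$ in \eqref{rot-stokes} to identify $\nabla P_q=-a\,u_\infty^\perp$; some substitute for this step is needed in your write-up as well. Finally, note that the identification of $\alpha$ with the boundary torque is, in the paper, a rather delicate computation (the terms $J_1,\dots,J_4$) precisely because the flux carrier and the relation \eqref{div-1}, which produces the extra volume term $-2a\,y\cdot v$, must be accounted for; your one-line appeal to integration by parts underestimates this, but it is repairable, unlike the Bogovskii issue, which requires the paper's flux reduction to be inserted before anything else.
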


Note that $T(u,p)\nu$ belongs to
$H^{-1/2}(\partial\Omega):=H^{1/2}(\partial\Omega)^*$
by the normal trace theorem since
$T(u,p)\in L^2(\Omega_R)$ and
$\mbox{div $T(u,p)$}=-a\big(x^\perp\cdot\nabla u-u^\perp\big)-f\in L^2(\Omega_R)$.
Therefore, the boundary integral
$\int_{\partial\Omega}y^\perp\cdot\big(T(u,p)\nu\big)\,d\sigma_y$
can be understood as
${}_{H^{1/2}(\partial\Omega)}\langle y^\perp, T(u,p)\nu\rangle_{H^{-1/2}(\partial\Omega)}$ in \eqref{coe-asy}.
Since $u\in H^{1/2}(\partial\Omega)$, the same reasoning as above justifies
$\int_{\partial\Omega}\big(\widetilde T(u,p)\nu\big)\cdot (u-u_\infty)\,d\sigma$ 
in \eqref{energy}.
All the other integrals in \eqref{energy} and \eqref{coe-asy} also make sense.

For the usual Stokes system \eqref{stokes},
under the same growth condition on $u(x)$ as in Theorem \ref{decay},
there is a constant $u_\infty\in\mathbb R^2$ such that
(Chang and Finn \cite[Theorem 1]{CF})
\begin{equation}
u(x)=u_\infty+E(x)N+O(|x|^{-1})
\label{stokes-expan}
\end{equation}
as $|x|\to\infty$, where
\begin{equation}
E(x)=\frac{1}{4\pi}
\left[\left(\log\frac{1}{|x|}\right)\mathbb I
+\frac{x\otimes x}{|x|^2}\right]
\label{stokes-funda}
\end{equation}
is the Stokes fundamental solution
and $N$ denotes the net force \eqref{net}.
We observe the remarkable difference between
$\mbox{\eqref{resolution}}_1$ and \eqref{stokes-expan};
in fact, the flow is bounded in Theorem \ref{decay}
even though the net force $N$ does not vanish.
We would say that this is the resolution of the Stokes paradox.

The leading term of \eqref{asym-rep} is of interest since it
contains the rotational profile $x^\perp/|x|^2$, which comes from
the leading term of the fundamental solution
of \eqref{rot-stokes}, see \eqref{asym-funda}.
The other profile 
$-x/(2\pi |x|^2)$ is called the flux carrier.
If in particular the flux $\beta$ at the boundary vanishes,
then the leading term is purely rotational and that is just the case
in the next theorem.
Look at the coefficient \eqref{coe-asy}
of $x^\perp/|x|^2$, where the integral
$\int_{\partial\Omega}y^\perp\cdot\big(T(u,p)\nu\big)\,d\sigma_y$
stands for the torque exerted by the fluid to the obstacle.
It is worth while noting that, in three dimensions,
one finds the rotational profile
$(e_3\times x)/|x|^3$,
whose coefficient involves the torque,
in the second term after the leading one.
For details, see Farwig and Hishida \cite[Theorem 1.1]{FH1}.
It is reasonable that both
$x^\perp/|x|^2=\nabla^\perp\log |x|$
and
$x/|x|^2=\nabla\log |x|$
are solutions to \eqref{rot-stokes} with $f=0$
in $\mathbb R^2\setminus\{0\}$ together with the constant pressure and, 
therefore, so is the leading term of \eqref{asym-rep}.
In fact, we observe
\[
\Delta\frac{x^\perp}{|x|^2}=0, \qquad
x^\perp\cdot\nabla\frac{x^\perp}{|x|^2}=\frac{(x^\perp)^\perp}{|x|^2}, \qquad
\mbox{div}\;\frac{x^\perp}{|x|^2}=0 \qquad
\mbox{in $\mathbb R^2\setminus\{0\}$}
\]
as well as \eqref{carr} (with $x_0=0$) below.

In Theorem \ref{decay} it is also possible to find the asymptotic 
representation of the pressure $p(x)$
without assuming any growth condition on $p(x)$ itself
since it can be controlled by the growth of $u(x)$
via the equation $\mbox{\eqref{rot-stokes}}_1$.
The leading profile of
$p(x)+a\,u_\infty^\perp\cdot x-p_\infty$
in $\mbox{\eqref{resolution}}_2$ is just the fundamental solution
$Q(x)=\frac{x}{2\pi |x|^2}$ of the pressure to the Stokes system.
This is because
\begin{equation}
\mbox{{div $\big(x^\perp\cdot\nabla u-u^\perp\big)$}}
=x^\perp\cdot\nabla\mbox{{div $u$}}=0
\label{solenoidal}
\end{equation}
so that the pressure part of the fundamental solution is independent
of $a\in\mathbb R$.
Thus we are not interested in the asymptotic representation of
the pressure, which the rotation of the obstacle does not affect so much.
The coefficient of the leading profile $Q(x)$ is rather complicated
in Theorem \ref{decay}, but it becomes
just the force in the next theorem, see \eqref{rot-p-expan}.

The next question is whether one can actually construct a solution
to \eqref{rot-stokes} when zero velocity is prescribed at infinity
as in \eqref{rot-bc}.
The following theorem gives an affirmative answer.

\begin{theorem}
Let $a\in\mathbb R\setminus\{0\}$.
Suppose that $f=\mbox{\emph{div} $F$}\in C^\infty(\Omega)$,
with $F\in L^2(\Omega)$, satisfies \eqref{f-cond}.
Then the system \eqref{rot-stokes} subject to \eqref{rot-bc} admits a 
smooth solution $\{u,p\}$, which is of class
$u\in L^{2,\infty}(\Omega)\cap H^1_{loc}(\overline{\Omega})$,
$p\in L^2_{loc}(\overline{\Omega})$
as well as
$\nabla u\in L^2(\Omega)$ and fulfills
\begin{equation}
\begin{split}
\|u\|_{L^{2,\infty}(\Omega)}
&\leq C\Big[
1+|a|+(1+|a|^{-1})\Big(\|F\|_{L^2(\Omega)} \\
&\qquad +\int_{\Omega}|x||f(x)|\,dx
+\sup_{x\in\Omega}|x|^3\big(\log\,(e+|x|)\big)|f(x)|\Big)\Big],  \\
&\quad \\
\|\nabla u\|_{L^2(\Omega)}
&\leq \|F\|_{L^2(\Omega)}+C|a|,
\end{split}
\label{weak-est}
\end{equation}
with some $C>0$ independent of $f$ and $a\in\mathbb R\setminus\{0\}$,
and $\{u,p\}$ enjoys all the assertions in Theorem \ref{decay} with
$\{u_\infty,p_\infty\}=\{0,0\}$.
In particular, we have
\begin{equation}
u(x)=\left(\int_{\partial\Omega}y^\perp\cdot\big(T(u,p)\nu\big)\,d\sigma_y
+\int_\Omega y^\perp\cdot f\,dy
\right)
\frac{x^\perp}{4\pi|x|^2}+(1+|a|^{-1})\,o(|x|^{-1}),
\label{rot-u-expan}
\end{equation}
\begin{equation}
p(x)=\left(\int_{\partial\Omega}T(u,p)\nu\,d\sigma
+\int_\Omega f\,dy
\right)\cdot
\frac{x}{2\pi|x|^2}+O(|x|^{-2}),
\label{rot-p-expan}
\end{equation}
as $|x|\to\infty$ under the additional condition \eqref{f-cond-2}
(which is needed only for \eqref{rot-u-expan}).
This is the only solution in the class
$\nabla u\in L^2(\Omega)$, $\{u,p\}\in L^2_{loc}(\overline{\Omega})$
with $\{u,p\}\to \{0,0\}$ as $|x|\to\infty$.
\label{existence}
\end{theorem}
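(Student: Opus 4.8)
The plan is to first remove the inhomogeneous boundary condition $\mbox{\eqref{rot-bc}}_1$, then build the far field out of the fundamental solution of \eqref{rot-stokes} analysed in Section~\ref{result}'s successor, the whole difficulty being to force the constant $u_\infty$ at infinity to vanish. Since $ax^\perp=\nabla^\perp(a|x|^2/2)$ is a curl, I would take the auxiliary function \eqref{auxi} to be a solenoidal, compactly supported lift $b$ of $ax^\perp$ (cut off the stream function $a|x|^2/2$ away from $\partial\Omega$), so that $b|_{\partial\Omega}=ax^\perp$ and $b$ vanishes outside a ball. Writing $u=b+v$, the unknown $v$ solves \eqref{rot-stokes} with homogeneous boundary value and modified force $g=f+\mathcal Lb$, where $\mathcal L$ denotes the operator on the left of $\mbox{\eqref{rot-stokes}}_1$; because $b$ is smooth with compact support, $g=\mbox{div}\,G$ with $G=F+O(|a|)\in L^2(\Omega)$ and $g$ still obeys \eqref{f-cond} (and \eqref{f-cond-2} when $f$ does). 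An energy solution $v$ with $\nabla v\in L^2(\Omega)$ is then produced by the classical invading-domains argument of Leray: on each $\Omega_\rho$ the rotation term drops out of the quadratic form by the skew-symmetry \eqref{skew} (tested against $v$ under the homogeneous boundary condition), so the form is coercive and testing with $v$ gives $\|\nabla v\|_{L^2(\Omega_\rho)}\le\|G\|_{L^2}$ uniformly; passing to the limit and adding $\nabla b=O(|a|)$ yields the second bound in \eqref{weak-est}.

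At this stage $u=b+v$ satisfies $\nabla u\in L^2(\Omega\setminus B_R)$, which is hypothesis \emph{(i)} of Theorem~\ref{decay} with $r=2$; that theorem already supplies $u=u_\infty+(1+|a|^{-1})\,O(|x|^{-1})$, the energy balance \eqref{energy}, and, under \eqref{f-cond-2}, the representation \eqref{asym-rep}. \textbf{The main obstacle} is to show $u_\infty=0$, i.e. that the construction selects the decaying solution; this is exactly the point at which the Stokes paradox would survive in the purely viscous case, and where the oscillation-induced decay of the rotational fundamental solution is indispensable. I would argue through the weak-$L^2$ space: a nonzero constant does not lie in $L^{2,\infty}(\Omega)$, so $u\to u_\infty$ together with $u\in L^{2,\infty}(\Omega)$ forces $u_\infty=0$. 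To exhibit $u\in L^{2,\infty}(\Omega)$ I would compare $u$ with the volume potential $V=\Gamma*\widetilde g$ of the reduced force ($\widetilde g$ the extension of $g$ by zero, $\Gamma$ the fundamental tensor of \eqref{rot-stokes}); the decay behind \eqref{asym-funda}, quantified in Lemma~\ref{loc-sum} and Proposition~\ref{justify}, gives $V\in L^{2,\infty}(\mathbb R^2)$ with $V(x)\to0$ and $\nabla V\in L^2$. Subtracting $V$ leaves an exterior problem with compactly supported force and smooth, decaying boundary value, whose solution I would again represent through $\Gamma$ (a cut-off of $V$ corrected by Bogovskii for the solenoidal constraint, plus a layer built on $\Gamma$), so that every constituent manifestly lies in $L^{2,\infty}$ and tends to $0$. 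Summing the norms of the pieces $b$, $V$ and the correction against $\|F\|_{L^2}$, $\int_\Omega|x||f|\,dx$ and $\sup_x|x|^3(\log(e+|x|))|f|$ produces the first bound in \eqref{weak-est}.

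With $\{u_\infty,p_\infty\}=\{0,0\}$ in hand, all assertions of Theorem~\ref{decay} transfer verbatim, and \eqref{rot-u-expan}, \eqref{rot-p-expan} are the specialisations of \eqref{asym-rep}, \eqref{coe-asy} and $\mbox{\eqref{resolution}}_2$. Here I would simplify the coefficients using $u|_{\partial\Omega}=ax^\perp$: the flux $\beta=\int_{\partial\Omega}\nu\cdot u\,d\sigma=a\int_{\partial\Omega}\nu\cdot x^\perp\,d\sigma=0$ by the divergence theorem (as $\mbox{div}\,x^\perp=0$), so the flux carrier $-x/(2\pi|x|^2)$ disappears and the leading term is purely rotational; likewise $a\int_{\partial\Omega}y^\perp\cdot\big((u\otimes y^\perp)\nu\big)\,d\sigma=a^2\int_{\partial\Omega}|y|^2(\nu\cdot y^\perp)\,d\sigma=0$ because $y\cdot y^\perp=0$, which reduces $\alpha$ in \eqref{coe-asy} to the torque $\int_{\partial\Omega}y^\perp\cdot\big(T(u,p)\nu\big)\,d\sigma+\int_\Omega y^\perp\cdot f\,dy$ of \eqref{rot-u-expan}. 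The pressure coefficient in \eqref{rot-p-expan} is then the total force, the rotation not affecting the pressure part by \eqref{solenoidal}.

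Finally, uniqueness follows from the energy balance itself. If $\{u_1,p_1\}$ and $\{u_2,p_2\}$ both solve the problem in the stated class, their difference $\{w,p_w\}$ has $\nabla w\in L^2(\Omega)$, force $0$, and $w\to0$, so Theorem~\ref{decay} applies with $f=0$ and $w_\infty=0$; the boundary integrals in \eqref{energy} vanish because $w|_{\partial\Omega}=0$, leaving $\int_\Omega|Dw|^2\,dx=0$. Hence $w$ is an infinitesimal rigid motion $w=\omega\,x^\perp+c$, and $w\to0$ forces $\omega=0$, $c=0$, so $w\equiv0$; the equation then gives $\nabla p_w=0$, whence $p_w\equiv0$ by $p_w\to0$.
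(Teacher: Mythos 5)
Your opening and closing steps reproduce the paper's: the compactly supported solenoidal lift is exactly \eqref{auxi}, the coefficient simplifications $\beta=0$ and $a\int_{\partial\Omega}(\nu\cdot y^\perp)|y|^2\,d\sigma=0$ giving \eqref{rot-u-expan}--\eqref{rot-p-expan} are the paper's, and your uniqueness argument via \eqref{energy} is the paper's as well. The genuine gap sits precisely at the step you yourself single out as the main obstacle: proving $u_\infty=0$, i.e.\ $u\in L^{2,\infty}(\Omega)$, for the solution you constructed. Your plan is to take the Leray (invading-domains) solution $v$, subtract the volume potential $V$ of the reduced force, and then ``represent'' the remainder $v-V$ through $\Gamma_a$ so that every constituent visibly decays. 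But $v-V$ is a \emph{fixed} function produced by the energy method, and the only available representation of such a function through $\Gamma_a$ (the cut-off argument in the proof of Theorem \ref{decay}) yields a potential \emph{plus a polynomial} $P_v$, see \eqref{tempered}; Theorem \ref{decay} shows $P_v$ is a constant, and that constant is exactly the unknown $u_\infty$. Asserting that the constituents all tend to zero therefore begs the question. If instead you mean to build a decaying solution of the exterior problem for the data of $v-V$ by layer potentials, then (i) no layer-potential theory for the operator \eqref{diff-op} is developed in the paper or available off the shelf, and (ii) even granting it, identifying the constructed solution with $v-V$ requires uniqueness in the class of finite-Dirichlet-integral solutions tending to \emph{some} constant --- and that class has no uniqueness for this system: by linearity, a nontrivial solution with zero boundary data, zero force and $w\to c\neq 0$ exists exactly when the exterior problem with boundary value $-c$ and decay at infinity is solvable, which is the very type of existence statement being proven here (and which the resolution of the Stokes paradox makes true rather than absurd). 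So the comparison argument cannot close.

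The paper escapes this circle by changing the approximation rather than the representation: instead of Leray's method it solves the penalized problems \eqref{ep-ext} with the term $\varepsilon u_\varepsilon$ (Finn--Smith). The point is that \eqref{appro} puts $u_\varepsilon$ in $L^2(\Omega)$, so when $v_\varepsilon=(1-\psi)u_\varepsilon+B[u_\varepsilon\cdot\nabla\psi]$ is represented through the fundamental solution $\Gamma_a^{(\varepsilon)}$ of \eqref{rot-st-ep} via Proposition \ref{ep-decay} and Lemma \ref{f-supp}, an additive constant is excluded \emph{before} passing to the limit; the decay estimates of Proposition \ref{ep-decay} are proved uniformly in $\varepsilon$ (this is why the paper keeps exploiting the oscillation there instead of the trivial absolute convergence for $\varepsilon>0$), which gives \eqref{far} uniformly in $\varepsilon$, hence a uniform $L^{2,\infty}$ bound that survives the limit $\varepsilon\to 0$ and forces $u_\infty=0$. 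Your outline can be repaired only by inserting this (or some comparable) mechanism that genuinely selects the decaying solution at the level of the approximation; as written, the central step does not go through.
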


Note that, when $a=0$, the problem is not always
solvable for given external force $f=\mbox{div $F$}$ even if
$F\in C_0^\infty(\overline\Omega)$, that may be also regarded
as the Stokes paradox.
The $L^\infty$-norm of $|x||u(x)|$ away from the boundary can be
also estimated by the RHS of $\mbox{\eqref{weak-est}}_1$
(see \eqref{far} for an approximate solution).
In order to control the $L^\infty$-norm of $u(x)$ near the boundary $\partial\Omega$,
the class $H^1_{loc}(\overline{\Omega})$ is not enough.
We put the term $x^\perp\cdot\nabla u-u^\perp$ in the RHS and use the
regularity theory of the usual Stokes system up to the boundary to show that
$u\in H^2_{loc}(\overline{\Omega})\subset L^\infty_{loc}(\overline{\Omega})$
together with a certain estimate,
which enables us to obtain the similar estimate of
$\sup_{x\in\Omega}(1+|x|)|u(x)|$ to $\mbox{\eqref{weak-est}}_1$.

We conclude this section with the following exact solutions
of both the Stokes and Navier-Stokes boundary value problems
without external force.
The Stokes flow \eqref{yamazaki-st} seems to be well known 
since it is found in some old literature.
The Navier-Stokes flow \eqref{yamazaki-ns}
is found in the second edition of \cite[p.302]{Ga-b}
(I learned it from Professor Masao Yamazaki around 2008).
Suppose the unit disk (rigid body) $\overline{B_1(0)}$
is rotating about the origin with constant angular velocity
$a\in\mathbb R\setminus\{0\}$.
Then the Navier-Stokes flow in the exterior 
$\Omega=\mathbb R^2\setminus\overline{B_1(0)}$ obeys
\begin{equation}
-\Delta v+\nabla q+v\cdot\nabla v=0, \qquad  
\mbox{div $v$}=0 \qquad\mbox{in $\Omega$}  
\label{disk-ns}
\end{equation}
subject to
\begin{equation}
v|_{\partial\Omega}=ay^\perp, \qquad 
v\to 0\quad \mbox{as $|y|\to\infty$} 
\label{noslip}
\end{equation}
and this problem has a solution
\begin{equation}
v(y)=\frac{ay^\perp}{|y|^2}, \qquad
q(y)=\frac{-a^2}{2|y|^2}+\mbox{constant}.
\label{yamazaki-ns}
\end{equation}
Also, the associated Stokes problem
\[
-\Delta v+\nabla q=0, \qquad  
\mbox{div $v$}=0 \qquad\mbox{in $\Omega$},
\]
sibject to \eqref{noslip} admits a solution
\begin{equation}
v(y)=\frac{ay^\perp}{|y|^2}, \qquad
q(y)=\mbox{constant}.
\label{yamazaki-st}
\end{equation}
Note that the Stokes flow \eqref{yamazaki-st} does not
contradict the Stokes paradox because 
$\int_{\partial\Omega} T(v,q)\nu\,d\sigma=0$
due to symmetry.
When the obstacle is a disk, 
we do not necessarily have to make the change of
variables \eqref{transform}, nevertheless, we can do so and
this case is not excluded in the present paper.
Steady flows in the original frame correspond to
time-periodic flows and are not steady in general
in the reference frame via \eqref{transform}.
But the Stokes flow
\eqref{yamazaki-st} becomes the steady one
$u(x)=ax^\perp/|x|^2,\, p(x)=$ constant in the reference frame as well
and this may be regarded as a special case
in Theorems \ref{decay} and \ref{existence} (when we take
$p=0$ in the latter theorem); indeed, one can verify
\[
\int_{\partial\Omega}y^\perp\cdot\big(T(u,p)\nu\big)\,d\sigma_y
=4\pi a
\]
in \eqref{rot-u-expan}.
Recently, Hillairet and Wittwer \cite{HW} proved that if the boundary value
$v|_{\partial\Omega}$ is sufficiently close to 
$ay^\perp$ with $|a|>\sqrt{48}$ in a sense and
$\int_{\partial\Omega}\nu\cdot v\,d\sigma=0$,
then the Navier-Stokes system \eqref{disk-ns} in the exterior
$\Omega=\mathbb R^2\setminus\overline{B_1(0)}$ of the disk
subject to this boundary condition admits at least one smooth solution,
which decays like $|y|^{-1}$ as $|y|\to\infty$.
The leading profile of their solution is given by
$y^\perp/|y|^2$ with some coefficient close to $a$.

\section{Fundamental solution}
\label{funda-sol}

In this section we derive the decay structure of the fundamental solution
of the linear system \eqref{rot-stokes} in the whole plane $\mathbb R^2$
when $a\in\mathbb R\setminus\{0\}$.
Because of \eqref{solenoidal} the pressure part 
of the fundamental solution is
\begin{equation}
Q(x-y)=\frac{x-y}{2\pi |x-y|^2},
\label{pre-funda}
\end{equation}
while the velocity part is given by
\begin{equation}
\Gamma_a(x,y)=\int_0^\infty O(at)^T\,K(O(at)x-y,t)\, dt,
\label{fundamental}
\end{equation}
where
\[
K(x,t)=G(x,t)\mathbb I+H(x,t)
\]
is the fundamental solution of unsteady Stokes system ($a=0$),
and it consists of the 2D heat kernel
\[
G(x,t)=\frac{1}{4\pi t}\, e^{-|x|^2/4t}
\]
and $2\times 2$ matrix
\begin{equation}
H(x,t)
=\int_t^\infty \nabla^2G(x,s)\,ds
=\int_t^\infty
G(x,s)\left(\frac{x\otimes x}{4s^2}-\frac{\mathbb I}{2s}\right)ds.
\label{funda-2}
\end{equation}
In 2D case one can write \eqref{funda-2} in terms of elementary functions
\begin{equation}
H(x,t)=
\frac{-(x\otimes x)}{|x|^2}\,G(x,t)
+\left(\frac{x\otimes x}{|x|^2}-\frac{\mathbb I}{2}\right)
\frac{1-e^{-|x|^2/4t}}{\pi|x|^2},
\label{funda-3}
\end{equation}
while one cannot in 3D, see \cite{FH1}.
One needs more careful argument than 3D case \cite{FH1}
to prove that \eqref{fundamental}
is actually the fundamental solution,
see Proposition \ref{justify}.

Indeed the integral representation \eqref{fundamental}
does not absolutely converge, but it is convergent due to
oscillation $O(at)^T$ with $a\in\mathbb R\setminus\{0\}$,
see Lemma \ref{converge}.
This is a contrast to the case $a=0$, in which \eqref{fundamental}
is not convergent.
In this case one needs the centering technique to recover the convergence,
which leads to the Stokes fundamental solution $E(x)$
given by \eqref{stokes-funda} as follows:
\begin{equation}
\int_0^\infty\left(K(x,t)-\frac{e^{-e/4t}}{8\pi t}\,\mathbb I\right)\,dt
=E(x).
\label{centering-st}
\end{equation}
This was clarified by 
Guenther and Thomann \cite[Proposition 2.2]{GT}.
As a part of this technique \eqref{centering-st},
the fundamental solution of the Laplace
operator in two dimensions is recovered exactly as
\begin{equation}
\int_0^\infty \left(G(x,t)-\frac{e^{-1/4t}}{4\pi t}\right)dt 
=\frac{1}{2\pi}\log \frac{1}{|x|}
\label{centering}
\end{equation}
in terms of the heat kernel,
see \cite[Lemma 2.1]{GT}.
Although we do not need the centering technique in the representation 
\eqref{fundamental} itself,
we will use this technique to justify some formulae in this section.
\begin{remark}
In \cite[p.301]{FHM}
Farwig, Hishida and M\"uller mentioned that the integral kernel
$\int_0^\infty O(t)^TG(O(t)x-y,t)\,dt$
should be modified to 
recover the convergence in two dimensions.
But this is redundant
as we will see in Lemma \ref{converge}
by making use of the oscillation.
\label{redundant}
\end{remark}

For convenience we will collect a few elementary foumulae,
which will be used several times.
We omit the proof that is nothing but integration by parts.
In the first assertion below
it is possible to derive even faster decay 
$r^{-\{2(m-1)+2k\}}$ for every $k\in\mathbb N$
by $k$-times integration by parts, but \eqref{osc-1} and \eqref{osc-2}
are enough for later use.
Note that they are not absolutely convergent for $m\leq 1$
(the only case we need is $m=1$).
\begin{lemma}
Let $r>0$.

\begin{enumerate}
\item
Let $a\in\mathbb R\setminus\{0\}$ and $m>0$.
Then
\begin{equation}
\left|\int_0^\infty e^{iat}e^{-r^2/t}\,\frac{dt}{t^m}\right|
\leq \frac{C}{|a|r^{2m}},
\label{osc-1}
\end{equation}
\begin{equation}
\left|\int_0^\infty e^{iat}\int_t^\infty e^{-r^2/s}\,\frac{ds}{s^{m+1}}\, dt\right|
\leq \frac{C}{|a|r^{2m}}, 
\label{osc-2}
\end{equation}
with some $C=C(m)>0$ independent of $r>0$ and $a\in\mathbb R\setminus\{0\}$,
where $i=\sqrt{-1}$.

\item
Let $m>1$.
Then

\begin{equation}
\int_0^\infty e^{-r^2/t}\,\frac{dt}{t^m}=\frac{\gamma(m-1)}{r^{2(m-1)}},
\label{exa-1}
\end{equation}
\begin{equation}
\int_0^\infty \int_t^\infty e^{-r^2/s}\,\frac{ds}{s^{m+1}}\, dt
=\frac{\gamma(m-1)}{r^{2(m-1)}},
\label{exa-2}
\end{equation}
where $\gamma(\cdot)$ denotes the Euler gamma function.
\end{enumerate}
\label{formula}
\end{lemma}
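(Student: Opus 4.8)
The plan is to establish the two exact identities in part~(2) first, since the oscillatory estimates in part~(1) will be reduced to them. For \eqref{exa-1} I would substitute $u=r^2/t$, which turns $\int_0^\infty e^{-r^2/t}\,t^{-m}\,dt$ into $r^{2(1-m)}\int_0^\infty e^{-u}u^{m-2}\,du = r^{2(1-m)}\gamma(m-1)$; the $u$-integral converges precisely because $m>1$, which is exactly the hypothesis. For \eqref{exa-2} I would exchange the order of integration over the region $\{0<t<s<\infty\}$ (Tonelli applies since the integrand is nonnegative), so that $\int_0^\infty\!\int_t^\infty e^{-r^2/s}s^{-m-1}\,ds\,dt = \int_0^\infty s\cdot e^{-r^2/s}s^{-m-1}\,ds = \int_0^\infty e^{-r^2/s}s^{-m}\,ds$, which is \eqref{exa-1}.

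For the oscillatory estimates in part~(1) the idea is to trade the oscillation $e^{iat}$ for a factor $|a|^{-1}$ by integrating by parts via $e^{iat}=\frac{1}{ia}\frac{d}{dt}e^{iat}$. Writing $\phi(t)=e^{-r^2/t}t^{-m}$ for \eqref{osc-1}, one integration by parts gives $\int_0^\infty e^{iat}\phi(t)\,dt = \frac{1}{ia}[e^{iat}\phi(t)]_0^\infty - \frac{1}{ia}\int_0^\infty e^{iat}\phi'(t)\,dt$. The boundary terms vanish because $\phi(t)\to 0$ both as $t\to 0$ (exponential decay beats the pole) and as $t\to\infty$ (here $m>0$ is used), while $\phi'(t)=e^{-r^2/t}(r^2 t^{-m-2}-m\,t^{-m-1})$ is now absolutely integrable; taking absolute values and applying \eqref{exa-1} with exponents $m+2$ and $m+1$ (both exceeding $1$) bounds the left-hand side of \eqref{osc-1} by $\frac{1}{|a|}\big(r^2\gamma(m+1)r^{-2(m+1)}+m\gamma(m)r^{-2m}\big)=\frac{2m\gamma(m)}{|a|r^{2m}}$, using $\gamma(m+1)=m\gamma(m)$. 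For \eqref{osc-2}, setting $\psi(t)=\int_t^\infty e^{-r^2/s}s^{-m-1}\,ds$, one integration by parts produces a boundary contribution $-\frac{1}{ia}\psi(0)$ with $\psi(0)=\gamma(m)r^{-2m}$ finite by \eqref{exa-1}, plus $\frac{1}{ia}\int_0^\infty e^{iat}e^{-r^2/t}t^{-m-1}\,dt$; bounding this last integral by its absolute value and invoking \eqref{exa-1} once more (exponent $m+1>1$) yields the bound $\frac{2\gamma(m)}{|a|r^{2m}}$. In both cases the constant depends only on $m$, as required.

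The main obstacle is the justification of the integration by parts. Since for $m\le 1$ the integrals in part~(1) are only conditionally, not absolutely, convergent, I would perform each integration by parts on a truncated integral $\int_\epsilon^R$ and then pass to the limits $\epsilon\to 0$ and $R\to\infty$, verifying that the boundary terms at $\epsilon$ and $R$ behave as claimed and that the differentiated integrand is dominated uniformly by an absolutely integrable function. This bookkeeping is the only delicate point; everything else reduces to the substitution and order-exchange of part~(2) together with the elementary recursion $\gamma(m+1)=m\gamma(m)$.
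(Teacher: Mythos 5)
Your proposal is correct and matches the paper's intended argument: the paper omits the proof with the remark that it is ``nothing but integration by parts,'' which is exactly your mechanism of trading the oscillation $e^{iat}$ for a factor $|a|^{-1}$ via one integration by parts, bounding the resulting absolutely convergent integrals by the exact formulae of part (2). Your additional care with truncation at $\epsilon$ and $R$ is precisely the right way to handle the conditional (non-absolute) convergence for $m\leq 1$ that the paper itself points out, so nothing is missing.
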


We begin with the following lemma, from which the function 
\eqref{fundamental} is well-defined.
\begin{lemma}
Let $a\in\mathbb R\setminus\{0\}$.
Then the integral $\Gamma_a(x,y)$ given by \eqref{fundamental}
converges for every
$(x,y)\in\mathbb R^2\times \mathbb R^2$ with $x\neq y$.
\label{converge}
\end{lemma}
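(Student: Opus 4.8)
The plan is to localize the only real difficulty to the behaviour as $t\to\infty$, and then to extract from $K$ the precise slowly-decaying part that must be controlled by the oscillation of $O(at)^T$ rather than by absolute convergence. First I would record that the integrand $t\mapsto O(at)^TK(O(at)x-y,t)$ is continuous on $(0,\infty)$: the apparent singularity of the factor $\frac{z\otimes z}{|z|^2}$ in \eqref{funda-3} at $z=O(at)x-y=0$ (which occurs only when $|x|=|y|$, and then only at isolated $t>0$) is killed by the vanishing of its scalar coefficient there, so that $K(0,t)=\frac{\mathbb I}{8\pi t}$ is finite. Moreover, as $t\to0^{+}$ one has $O(at)x-y\to x-y\neq0$, so the integrand has a finite limit and $\int_0^1$ is harmless. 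The genuine obstruction is $\int_1^\infty$, since $G(z,t)=\frac{1}{4\pi t}e^{-|z|^2/4t}\sim\frac{1}{4\pi t}$ is not integrable in $t$; this is exactly the failure of absolute convergence announced before the lemma, and convergence has to come from $O(at)^T$.

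Next, writing $z(t)=O(at)x-y$ and noting $|z(t)|\le|x|+|y|=:M$, I would expand $K(z,t)$ for $t\ge1$ in the small parameter $u=|z|^2/4t\le M^2/4t$. Using \eqref{funda-3}, the two contributions proportional to the rotating matrix $\frac{z\otimes z}{|z|^2}$, namely $-\frac{z\otimes z}{|z|^2}G$ and $\frac{z\otimes z}{|z|^2}\,\frac{1-e^{-|z|^2/4t}}{\pi|z|^2}$, cancel to leading order, while the two isotropic contributions combine to $\frac{\mathbb I}{4\pi t}-\frac{\mathbb I}{8\pi t}=\frac{\mathbb I}{8\pi t}$. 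Concretely this produces
\[
K(z(t),t)=\frac{\mathbb I}{8\pi t}+R(t),\qquad |R(t)|\le\frac{C}{t^{2}}\quad(t\ge1),
\]
with $C=C(M)$, the remainder being $O(t^{-2})$ uniformly because every surviving term carries an extra factor $u\sim|z|^2/t$.

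Finally I would split $\int_1^\infty O(at)^TK(z(t),t)\,dt$ along this decomposition. The remainder part $\int_1^\infty O(at)^TR(t)\,dt$ converges absolutely, since $O(at)^T$ is orthogonal and $|R(t)|\le C/t^{2}$. For the leading part, $O(at)^T\frac{\mathbb I}{8\pi t}=\frac{1}{8\pi t}\,O(at)^T$ has entries that are multiples of $\frac{\cos at}{t}$ and $\frac{\sin at}{t}$, and $\int_1^\infty\frac{\cos at}{t}\,dt$, $\int_1^\infty\frac{\sin at}{t}\,dt$ converge as improper integrals for $a\neq0$ (Dirichlet's test, or a single integration by parts). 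Combined with $\int_0^1$, this gives convergence of $\Gamma_a(x,y)$ for all $x\neq y$. The same oscillatory mechanism is what Lemma \ref{formula} quantifies, and I would invoke it instead if I preferred to keep the Gaussian factors rather than pass to the isotropic leading term.

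The step I expect to be the crux is the exact cancellation in the $O(t^{-1})$ part. If the slowly-decaying leading coefficient of $K$ were not a scalar multiple of $\mathbb I$ but retained the rotating matrix $\frac{z(t)\otimes z(t)}{|z(t)|^2}$, then its product with $O(at)^T$ need not average to zero over a period and $\int^\infty dt/t$ would diverge; it is the isotropy of the $1/t$-term—special to two dimensions, where $H$ is available explicitly as \eqref{funda-3}—that lets the rotation oscillate the leading term away while relegating every anisotropic, resonance-prone contribution to the absolutely convergent order $O(t^{-2})$. Establishing this cancellation uniformly in the rotating direction $z/|z|$ (including across the values of $t$ at which $z(t)=0$) is the delicate point; the rest is routine.
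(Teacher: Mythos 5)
Your proof is correct, and it takes a genuinely different route from the paper's. The paper works with the double-integral form \eqref{funda-2} of $H$ and splits $\Gamma_a$ into the three pieces \eqref{split-fun}: the Gaussian piece $\Gamma_a^0$ and the isotropic piece $\Gamma_a^{12}$ are ``centered'' by subtracting the reference functions $e^{-1/4t}/t$ and $\int_t^\infty e^{-1/4s}s^{-2}\,ds$, whose products with $O(at)^T$ converge by the integration-by-parts bounds \eqref{osc-1}--\eqref{osc-2} of Lemma \ref{formula}, while the centered differences and the anisotropic piece $\Gamma_a^{11}$ are shown to converge absolutely. You instead keep $K$ whole, use the two-dimensional closed form \eqref{funda-3}, and extract the exact isotropic slow part: writing $P=z\otimes z/|z|^2$ and $u=|z|^2/4t$, one has $K=(\mathbb I-P)\frac{e^{-u}}{4\pi t}+\bigl(P-\tfrac12\mathbb I\bigr)\frac{1-e^{-u}}{4\pi t u}$, the $P$-parts cancel at order $1/t$, and since $|e^{-u}-1|\le u$ and $\bigl|\frac{1-e^{-u}}{u}-1\bigr|\le u/2$ the remainder is bounded by $C(|x|+|y|)^2t^{-2}$ for $t\ge 1$; Dirichlet's test on $\int_1^\infty\cos(at)\,t^{-1}dt$ and $\int_1^\infty\sin(at)\,t^{-1}dt$ finishes the job. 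Your treatment of the two delicate points is sound: at the isolated times where $O(at)x=y$ one has $K=\mathbb I/(8\pi t)$ exactly, so the integrand is continuous, and near $t=0$ it has a finite limit since $x\ne y$. In effect you have rediscovered the Guenther--Thomann decomposition \eqref{cen-sp} of \cite{GT}, which the paper deploys in Lemma \ref{loc-sum} but, interestingly, not in its proof of this lemma. What your argument buys is economy and transparency: a single scalar oscillatory integral, and a clear structural reason why rotation resolves the divergence (the non-integrable part of $K$ is isotropic, hence averaged out by $O(at)^T$, while every anisotropic, resonance-prone term is already $O(t^{-2})$). What the paper's heavier split buys is reusability and uniformity: the pieces $\Gamma_a^0,\Gamma_a^{11},\Gamma_a^{12}$ and the quantitative oscillation bounds of Lemma \ref{formula} are exactly the scaffolding that Proposition \ref{key} refines into decay in $|x|$ with explicit $|a|^{-1}$ dependence, and the paper's proof also tracks the logarithmic behavior as $|x-y|\to 0$ (discussed right after the proof), which your cutoff at $t=1$ and constant depending on $|x|+|y|$ deliberately discard. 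Note finally that your reliance on \eqref{funda-3} is special to two dimensions, which is harmless here since the lemma is two-dimensional.
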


\begin{proof}
We decompose $\Gamma_a(x,y)$ as
\[
\Gamma_a(x,y)=\Gamma_a^0(x,y)+\Gamma_a^1(x,y), \qquad
\Gamma_a^1(x,y)=\Gamma_a^{11}(x,y)+\Gamma_a^{12}(x,y)
\]
with
\begin{equation}
\begin{split}
\Gamma_a^0(x,y)
&=\int_0^\infty O(at)^T G(O(at)x-y, t)\, dt,  \\
\Gamma_a^{11}(x,y)
&=\int_0^\infty\!\!\! O(at)^T\!\!\! \int_t^\infty\!\!\! 
G(O(at)x-y, s)\,
\frac{(O(at)x-y)\otimes (O(at)x-y)}{4s^2}\,ds\, dt,  \\
\Gamma_a^{12}(x,y)
&=\int_0^\infty O(at)^T \int_t^\infty G(O(at)x-y, s)\,
\frac{-1}{2s}\,ds\, dt.
\end{split}
\label{split-fun}
\end{equation}
We start with the convergence of $\Gamma_a^0(x,y)$ by using
the centering technique as in \eqref{centering}.
By \eqref{osc-1} we know that
\begin{equation}
\left|\int_0^\infty O(at)^T e^{-1/4t}\,\frac{dt}{t}\right|\leq\frac{C}{|a|}.
\label{cent-0}
\end{equation}
Hence, it suffices to show the convergence of
\begin{equation}
\int_0^\infty O(at)^T\,
\left(e^{-|O(at)x-y|^2/4t}-e^{-1/4t}\right)\,\frac{dt}{t}.
\label{1st}
\end{equation}
As we will see, this is absolutely convergent.
For large $t$, we have
\begin{equation*}
\begin{split}
\int_1^\infty \left|e^{-|O(at)x-y|^2/4t}-e^{-1/4t} \right|\frac{dt}{t}
&\leq \int_1^\infty \big||O(at)x-y|^2-1\big|\,\frac{dt}{4t^2} \\
&\leq \frac{(|x|+|y|)^2+1}{4}.
\end{split}
\end{equation*}
For small $t$, we use the relation
\begin{equation}
|O(at)x-y|^2
=|x-y|^2+2at\big(\dot{O}(a\theta t)x\big)\cdot\left(O(a\theta t)x-y\right)
\label{small-t}
\end{equation}
for some $\theta=\theta(a,t,x,y)\in (0,1)$,
where
$\dot{O}(t)=\frac{d}{dt}O(t)$.
Then we have
\begin{equation}
\int_0^1 \left|e^{-|O(at)x-y|^2/4t}-e^{-1/4t}\right|\,\frac{dt}{t}
\leq
\int_0^1 \left(e^{-|x-y|^2/4t}\,e^{|a||x|(|x|+|y|)/2}+e^{-1/4t}\right)\frac{dt}{t}
\label{small-t-1}
\end{equation}
with
\begin{equation}
\begin{split}
&\quad \int_0^1 e^{-|x-y|^2/4t}\,\frac{dt}{t}
=\left(\int_0^1+\int_1^{1/|x-y|^2}\right) e^{-1/4t}\,\frac{dt}{t} \\
&\leq 4+\int_1^{1/|x-y|^2}\frac{dt}{t}
=4+2\log\frac{1}{|x-y|} \qquad (0<|x-y|<1),
\end{split}
\label{small-t-2}
\end{equation}
while
\[
\int_0^1 e^{-|x-y|^2/4t}\,\frac{dt}{t}
\leq 4 \qquad (|x-y|\geq 1).
\]
This concludes that \eqref{1st} is absolutely convergent.

The next integral $\Gamma_a^{11}(x,y)$ is
absolutely convergent without centering as above.
Given $(x,y)$ with $x\neq y$, there is
$\delta=\delta(a,x,y)>0$ such that
\begin{equation}
0<\frac{|x-y|^2}{2}
\leq |O(at)x-y|^2
\leq\frac{3|x-y|^2}{2}, \qquad 0\leq\forall\, t\leq\delta,
\label{near-0}
\end{equation}
on account of 
$\displaystyle{\lim_{t\to 0}|O(at)x-y|^2=|x-y|^2}$.
This together with \eqref{exa-2} implies that
\begin{equation}
\begin{split}
&\quad \int_0^\infty \int_t^\infty e^{-|O(at)x-y|^2/4s}
\,\frac{|O(at)x-y|^2}{s^3}\, ds\,dt  \\
&\leq\int_0^\delta\int_t^\infty e^{-|x-y|^2/8s}\,
\frac{3|x-y|^2}{2s^3}\,ds \,dt
+\int_\delta^\infty \int_t^\infty \frac{(|x|+|y|)^2}{s^3}\,ds \,dt \\
&\leq\frac{3|x-y|^2}{2}
\int_0^\infty \int_t^\infty e^{-|x-y|^2/8s}\,\frac{ds}{s^3}\,dt
+\frac{(|x|+|y|)^2}{2}\int_\delta^\infty \frac{dt}{t^2} \\
&=C+\frac{(|x|+|y|)^2}{2\delta}.
\end{split}
\label{2nd-conv}
\end{equation}

Finally, similarly to the argument of convergence of $\Gamma_a^0(x,y)$,
we can discuss $\Gamma_a^{12}(x,y)$.
From \eqref{osc-2} it follows that
\begin{equation}
\left|\int_0^\infty O(at)^T \int_t^\infty e^{-1/4s}\,\frac{ds}{s^2}\,dt\right|
\leq\frac{C}{|a|}.
\label{cent-12}
\end{equation}
It thus remains to show the convergence of
\begin{equation}
\int_0^\infty O(at)^T \int_t^\infty
\left(e^{-|O(at)x-y|^2/4s}-e^{-1/4s}\right)\,\frac{ds}{s^2}\,dt.
\label{3rd}
\end{equation}
For large $t$, we have
\begin{equation*}
\begin{split}
\int_1^\infty \int_t^\infty \left|e^{-|O(at)x-y|^2/4s}-e^{-1/4s}\right|
\frac{ds}{s^2}\,dt
&\leq\int_1^\infty \int_t^\infty
\big||O(at)x-y|^2-1\big|\,\frac{ds}{4s^3}\,dt  \\
&\leq\frac{(|x|+|y|)^2+1}{8}.
\end{split}
\end{equation*}
For small $t$, as in \eqref{small-t-1},
we use \eqref{small-t} to find
\begin{equation}
\begin{split}
&\quad \int_0^1 \int_t^\infty\left|e^{-|O(at)x-y|^2/4s}-e^{-1/4s}\right|
\frac{ds}{s^2}\,dt  \\
&\leq \int_0^1 \int_t^\infty
\left(e^{-|x-y|^2/4s}\, e^{|a||x|t(|x|+|y|)/2s}+e^{-1/4s}\right)\,
\frac{ds}{s^2}\,dt \\
&\leq  
e^{|a||x|(|x|+|y|)/2} 
\int_0^1 \int_t^\infty 
e^{-|x-y|^2/4s}\,\frac{ds}{s^2}\, dt +4
\end{split}
\label{small-t-3}
\end{equation}
with
\begin{equation}
\begin{split}
&\quad \int_0^1 \int_t^\infty
e^{-|x-y|^2/4s}\,\frac{ds}{s^2}\, dt
=\left(\int_0^1 +\int_1^{1/|x-y|^2}\right)
\int_t^\infty e^{-1/4s}\,\frac{ds}{s^2}\, dt \\
&\leq 4+4\int_1^{1/|x-y|^2}(1-e^{-1/4s})\,ds
\leq 4+\int_1^{1/|x-y|^2}\frac{ds}{s}  \\
&=4+2\log\frac{1}{|x-y|} \qquad (0<|x-y|<1),
\end{split}
\label{small-t-4}
\end{equation}
while
\[
\int_0^1 \int_t^\infty
e^{-|x-y|^2/4s}\,\frac{ds}{s^2}\, dt \leq 4 \qquad (|x-y|\geq 1).
\]
This implies the absolute convergence of \eqref{3rd}.
We have completed the proof.
\end{proof}

We have concentrated ourselves only on the convergence
of the integral \eqref{fundamental}.
So the estimates appeared in the proof above are
not related to the asymptotic behavior
with respect to $(x,y)$ at large distance,
which will be discussed in a different way in Proposition \ref{key},
but we have tried to derive the singular behavior for $|x-y|\to 0$
as less as possible,
see \eqref{small-t-2}, \eqref{2nd-conv} and
\eqref{small-t-4}.
This behavior should be logarithmic,
otherwise \eqref{fundamental} cannot be the fundamental solution,
but the behavior \eqref{2nd-conv} is not clear
since $\delta$ depends on $x, y$
(probably, the part $\Gamma_a^{11}(x,y)$ would be bounded 
for $|x-y|\to 0$ as in the second term of the Stokes fundamental solution
\eqref{stokes-funda}).
In order to ensure that the volume potential \eqref{vol} below
is well-defined,
we will show the following lemma.
The growth rate \eqref{loc-sum-y} with $\rho=2|x|$ will be also used
to show asymptotic representation \eqref{asy-u} below.
\begin{lemma}
Let $a\in \mathbb R\setminus\{0\}$
There is a constane $C>0$ independent of $a\in \mathbb R\setminus\{0\}$
such that
\begin{equation}
\int_{|y|\leq\rho}|\Gamma_a(x,y)|\,dy
\leq C|a|^{-1}\rho^2+C\rho^2\log\rho,
\label{loc-sum-y}
\end{equation}
\begin{equation}
\int_{|y|\leq\rho}|\nabla_x\Gamma_a(x,y)|\,dy
\leq C\rho,
\label{loc-sum-grad}
\end{equation}
for every $x\in\mathbb R^2$ and $\rho\geq |x|+e$.
\label{loc-sum}
\end{lemma}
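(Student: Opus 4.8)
The plan is to treat the two estimates separately, starting with the gradient bound \eqref{loc-sum-grad}, which is the softer of the two because differentiation improves the decay in $t$. Differentiating \eqref{fundamental} under the integral sign, each spatial derivative of the heat kernel produces a factor of order $|O(at)x-y|/t$, so that every piece of $\nabla_x\Gamma_a$ becomes absolutely convergent and no centering is needed. I would then invoke Tonelli's theorem to carry out the $y$-integration first: for fixed $t$ substitute $z=O(at)x-y$, noting that $y\in B_\rho(0)$ forces $z\in B_{2\rho}(0)$ since $|O(at)x|=|x|\le\rho$. The truncated Gaussian-moment integrals over $B_{2\rho}(0)$ obey two complementary bounds, one uniform in $\rho$ (integrating over all of $\mathbb R^2$) and one exploiting the radius $2\rho$ of the ball, whose minimum is integrable in $t$; balancing the two at the scale $t\sim\rho^2$ yields exactly the linear growth $C\rho$. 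The constant is independent of $a$ because oscillation plays no role here.

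For \eqref{loc-sum-y} I would use the decomposition $\Gamma_a=\Gamma_a^0+\Gamma_a^{11}+\Gamma_a^{12}$ of \eqref{split-fun}. The middle term $\Gamma_a^{11}$ was already shown to converge absolutely in the proof of Lemma \ref{converge}, so Tonelli again applies: integrating in $y$ first and substituting $z=O(at)x-y\in B_{2\rho}(0)$, I bound $\int_{|z|\le 2\rho}G(z,s)|z|^2 s^{-2}\,dz$ by $\min\{Cs^{-1},C\rho^4 s^{-3}\}$, swap the $s$- and $t$-integrations via $\int_0^\infty\!\int_t^\infty(\cdot)\,ds\,dt=\int_0^\infty s(\cdot)\,ds$, and obtain a contribution of order $\rho^2$. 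This part carries neither the logarithm nor the factor $|a|^{-1}$.

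The terms $\Gamma_a^0$ and $\Gamma_a^{12}$ are only conditionally convergent, so here I apply the centering of \eqref{centering}, subtracting the worst parts $e^{-1/4t}$ and $e^{-1/4s}$. The centered pieces $\int_0^\infty O(at)^T e^{-1/4t}\,\frac{dt}{t}$ and its analogue are bounded \emph{pointwise} by $C/|a|$ through the oscillation estimates \eqref{osc-1} and \eqref{osc-2} (cf. \eqref{cent-0}, \eqref{cent-12}), and integrating this constant over $B_\rho(0)$ produces precisely the term $C|a|^{-1}\rho^2$. The remaining centered integrands are absolutely convergent, and by the Frullani-type identity behind \eqref{centering} their pointwise size is governed by a logarithm; integrating a function of at most logarithmic growth over $B_\rho(0)$ gives the $C\rho^2\log\rho$ growth, where the logarithmic singularity along $|y|=|x|$ is integrable and the dominant contribution comes from the bulk $|x-y|\sim\rho$.

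The main obstacle is exactly this last point. In \eqref{fundamental} the Gaussian argument $|O(at)x-y|$ depends on $t$, so the clean formulae of Lemma \ref{formula}, which presuppose a $t$-independent radius $r$, cannot be applied verbatim to the centered remainders of $\Gamma_a^0$ and $\Gamma_a^{12}$. A naive absolute-value bound of the type $|e^{-r^2/4t}-e^{-1/4t}|\le|r^2-1|/4t$ is far too lossy and would produce a spurious $\rho^4$. I would instead sandwich $|O(at)x-y|$ between $\big||x|-|y|\big|$ and $|x|+|y|$ and split the $t$-integral at the scale $t\sim|O(at)x-y|^2$, recovering the correct logarithmic behavior of the two-dimensional Laplace/Stokes kernel uniformly in the angular variable. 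This is the delicate step; once the pointwise logarithmic bound is secured, the $y$-integration is routine and the stated growth rates follow.
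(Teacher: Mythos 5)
Your proposal is correct, but it takes a genuinely different route from the paper at the decisive step. The paper's key device is a rotation change of variables $y=O(at)z$ inside the $y$-integral: since $B_\rho(0)$ and matrix norms are rotation invariant, this replaces the $t$-dependent argument $O(at)x-y$ by the $t$-independent one $x-z$, after which the Guenther--Thomann decomposition \eqref{cen-sp} gives in one stroke the pointwise bound $\int_0^\infty\bigl|K(x-z,t)-\tfrac{e^{-1/4t}}{8\pi t}\mathbb I\bigr|\,dt\leq C\bigl(1+\bigl|\log|x-z|\bigr|\bigr)$, oscillation being used only for the subtracted term \eqref{subt}; the gradient estimate is then obtained by integrating in $t$ first, $\int_0^\infty|(\nabla K)(x-y,t)|\,dt\leq C|x-y|^{-1}$. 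You instead keep the moving radius $r(t)=|O(at)x-y|$ and tame it by monotonicity: since $r\mapsto e^{-r^2/4t}$ is decreasing, $|e^{-r(t)^2/4t}-e^{-1/4t}|$ is dominated by the sum of the corresponding differences at the two fixed radii $\bigl||x|-|y|\bigr|$ and $|x|+|y|$, each of which has constant sign in $t$, so the Frullani/centering identity behind \eqref{centering} applies and yields a $t$-independent logarithmic bound. This bound is coarser than the paper's (it is singular on the whole circle $|y|=|x|$ rather than only at $y=x$), but it still integrates over $B_\rho(0)$ to $C\rho^2\log\rho$, which is all that is needed. Likewise your separate Tonelli treatment of $\Gamma_a^{11}$ (complementary bounds $\min\{Cs^{-1},C\rho^4s^{-3}\}$, then $\int_0^\infty\int_t^\infty\,ds\,dt=\int_0^\infty s\,(\cdot)\,ds$) and of the gradient ($y$-integration first, balancing the two bounds at $t\sim\rho^2$) are sound and give the stated $C\rho^2$ and $C\rho$; the paper's substitution handles these more compactly, while your argument has the merit of not relying on the rotation-invariance trick at all.

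Two caveats. First, ``differentiating \eqref{fundamental} under the integral sign'' requires justification, because \eqref{fundamental} is only conditionally convergent; this is exactly why the paper proves \eqref{grad-funda} by a distributional argument. Your route can be repaired cheaply: by \eqref{K-grad} the differentiated integrand is bounded by $Ct^{-3/2}$ uniformly in $(x,y)$, so $\int_0^T\nabla_x\bigl[K(O(at)x-y,t)\bigr]\,dt$ converges uniformly as $T\to\infty$ while $\int_0^T K\,dt$ converges pointwise, and the classical theorem on interchanging limits and derivatives applies; but some such remark must be made. Second, your phrase ``split the $t$-integral at the scale $t\sim|O(at)x-y|^2$'' is not meaningful as written, since that scale depends on $t$ itself; the sandwich must come first, and the splitting is then performed at the fixed scales $\bigl(|x|\pm|y|\bigr)^2$ (or one simply invokes the constant-sign Frullani integral directly). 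With these two repairs your proof is complete and the constants are, as you say, independent of $a$ except for the $C|a|^{-1}\rho^2$ term coming from the centering pieces.
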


\begin{proof}
To this end, it is convenient to
use another representation \eqref{funda-3} of $H(x,t)$ together with
the centering technique \eqref{centering-st} due to 
Guenther and Thomann \cite{GT}.
But we subtract $\left(e^{-1/4t}/8\pi t\right)\mathbb I$
instead of $\left(e^{-e/4t}/8\pi t\right)\mathbb I$
since there is no need to derive $E(x)$.
First of all, it follows from \eqref{osc-1} that
\begin{equation}
\left|\int_0^\infty O(at)^T\,\frac{e^{-1/4t}}{8\pi t}\,dt\right|
\leq\frac{C}{|a|}.
\label{subt}
\end{equation}
We set
\begin{equation}
\widetilde\Gamma_a(x,y)
:=\int_0^\infty O(at)^T
\left(K(O(at)x-y,t)-\frac{e^{-1/4t}}{8\pi t}\mathbb I \right)\,dt.
\label{tilde}
\end{equation}
We will see that the integral of this integrand
over $(0,\infty)\times \overline{B_\rho(0)}$ with respect to $(t,y)$
is absolutely convergent.
By the transformation $y=O(at)z$ we have
\begin{equation}
\begin{split}
&\quad \int_0^\infty\int_{|y|\leq\rho}
\left|K(O(at)x-y, t)-\frac{e^{-1/4t}}{8\pi t}\mathbb I\right|\,dy\,dt  \\
&=\int_0^\infty\int_{|z|\leq\rho}
\left|K(x-z,t)-\frac{e^{-1/4t}}{8\pi t}\mathbb I\right|\,dz\,dt.
\end{split}
\label{trans}
\end{equation}
The useful decomposition discovered by \cite{GT} is
\begin{equation}
\begin{split}
&\quad K(x,t)-\frac{e^{-1/4t}}{8\pi t}\mathbb I \\
&=\frac{e^{-|x|^2/4t}-e^{-1/4t}}{8\pi t}\mathbb I
+\left(\frac{e^{-|x|^2/4t}}{8\pi t}
-\frac{1-e^{-|x|^2/4t}}{2\pi |x|^2}
\right)
\left(\mathbb I-\frac{2x\otimes x}{|x|^2}\right)
=:\frac{A+B}{8\pi}.
\end{split}
\label{cen-sp}
\end{equation}
Then we find
\begin{equation}
\int_0^\infty |A|\,dt
=C\,\big|\log |x|\big|,
\label{cen-1}
\end{equation}
while we see from 
the transformation $\tau=|x|^2/4t$ that
\begin{equation}
\int_0^\infty |B|\,dt
=\left|\mathbb I-\frac{2x\otimes x}{|x|^2}\right|
\int_0^\infty \frac{-\tau e^{-\tau}+1-e^{-\tau}}{\tau^2}\,d\tau
=\left|\mathbb I-\frac{2x\otimes x}{|x|^2}\right| 
\leq C
\label{cen-2}
\end{equation}
since
\[
0<\frac{-\tau e^{-\tau}+1-e^{-\tau}}{\tau^2}
=\frac{d}{d\tau}\left(\frac{e^{-\tau}-1}{\tau}\right)
\]
for every $\tau >0$.
By the Fubini theorem we obtain
\begin{equation*}
\begin{split}
\int_{|y|\leq\rho}|\widetilde\Gamma_a(x,y)|\,dy
&\leq C\int_{|y|\leq\rho}\left(1+\big|\log |x-y|\big|\right)\,dy \\
&\leq C\rho^2+C\int_{|y-x|\leq\rho+|x|}\big|\log |x-y|\big|\,dy \\
&\leq C\rho^2+C\rho^2\log\rho
\end{split}
\end{equation*}
for $\rho\geq |x|+e$.
This together with \eqref{subt} concludes \eqref{loc-sum-y}.

For the estimate of $\nabla_x\Gamma_a(x,y)$,
we first need to justify
\begin{equation}
\nabla_x\Gamma_a(x,y)
=\int_0^\infty O(at)^T\nabla_x \big[K(O(at)x-y,t)\big]\,dt
\label{grad-funda}
\end{equation}
with the aid of 
$\widetilde \Gamma_a(x,y)$ given by \eqref{tilde}.
Given $\varphi\in C_0^\infty(\mathbb R^2)$ arbitrarily, we have
\[
\langle\widetilde\Gamma_a(\cdot,y), \mbox{div $\varphi$}\rangle
=\int_0^\infty \left\langle O(at)^T
\left(K(O(at)x-y,t)-\frac{e^{-1/4t}}{8\pi t}\mathbb I\right),\,
\mbox{div $\varphi$}\right\rangle\,dt
\]
because this integral
over $(0,\infty)\times B_L(0)$ with respect to $(t,x)$
is absolutely convergent
by the same reasoning as in the proof of \eqref{loc-sum-y},
where $L>0$ is taken in such a way that
$\mbox{Supp $\varphi$}\subset B_L(0)$.
We then use
\begin{equation}
|(\nabla K)(x,t)|
\leq Ct^{-3/2}e^{-|x|^2/16t}
+C\int_t^\infty s^{-5/2}e^{-|x|^2/16s}\,ds
\label{K-grad}
\end{equation}
together with \eqref{exa-1}--\eqref{exa-2} to get the absolute convergence
\begin{equation*}
\begin{split}
\int_0^\infty\int_{|x|\leq L}
\left|\nabla_x\big[K(O(at)x-y,t)\big]\right|\,dx\,dt
&\leq C\int_0^\infty\int_{|x|\leq L}
|(\nabla K)(O(at)x-y,t)|\,dx\,dt  \\
&=C\int_0^\infty\int_{|x|\leq L}
|(\nabla K)(x-y,t)|\,dx\,dt  \\
&\leq C\int_{|x|\leq L}\frac{dx}{|x-y|}
\end{split}
\end{equation*}
as in \eqref{trans}.
Hence we obtain
\begin{equation*}
\begin{split}
\langle\widetilde\Gamma_a(\cdot,y), \mbox{div $\varphi$}\rangle
&=-\int_0^\infty\left\langle O(at)^T\nabla_x\big[K(O(at)x-y,t)\big], 
\varphi\right\rangle\,dt  \\
&=-\left\langle \int_0^\infty O(at)^T\nabla_x\big[K(O(at)x-y,t)\big]\,dt,
\varphi\right\rangle
\end{split}
\end{equation*}
for all $\varphi\in C_0^\infty(\mathbb R^2)$,
which implies \eqref{grad-funda} since
$\nabla_x\widetilde\Gamma_a(x,y)=\nabla_x\Gamma_a(x,y)$.
Once we have that, by the same reasoning as above we get
\[
\int_{|y|\leq\rho}|\nabla_x\Gamma_a(x,y)|\,dy
\leq C\int_{|y|\leq\rho}\int_0^\infty
|(\nabla K)(O(at)x-y,t)|\,dt\,dy
\leq C\int_{|y|\leq\rho}\frac{dy}{|x-y|}
\]
which leads to \eqref{loc-sum-grad} for $\rho\geq |x|+e$.
\end{proof}

The following estimate provides the decay structure of $\Gamma_a(x,y)$
and plays a crucial role in this paper.
\begin{proposition}
Let $a\in\mathbb R\setminus\{0\}$.
\begin{enumerate}
\item
There is a constant $C>0$ independent of $a\in\mathbb R\setminus\{0\}$
such that
\begin{equation}
\left|\Gamma_a(x,y)-\frac{x^\perp\otimes y^\perp}{4\pi |x|^2}\right|
\leq \frac{C(|a|^{-1}+|y|^2)}{|x|^2}
\label{asy-xy}
\end{equation}
for all $(x,y)\in\mathbb R^2\times \mathbb R^2$ with $|x|> 2|y|$.
In particular, we have
\begin{equation}
\Gamma_a(x,y)
=\frac{x^\perp\otimes y^\perp}{4\pi |x|^2}
+O(|x|^{-2}),
\label{asym-funda}
\end{equation}
as $|x|\to\infty$ so long as $|y|\leq\rho$,
where $\rho >0$ is fixed.

\item
Similarly, there is a constant $C>0$ independent of $a\in\mathbb R\setminus\{0\}$
such that
\begin{equation}
\left|\Gamma_a(x,y)-\frac{x^\perp\otimes y^\perp}{4\pi |y|^2}\right| 
\leq \frac{C(|a|^{-1}+|x|^2)}{|y|^2}
\label{asy-xy2}
\end{equation}
for all $(x,y)\in\mathbb R^2\times \mathbb R^2$ with $|y|> 2|x|$.
\end{enumerate}
\label{key}
\end{proposition}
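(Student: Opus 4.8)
The plan is to prove the first assertion \eqref{asy-xy} by a first-order Taylor expansion of the kernel in $y$ about $y=0$, and then to deduce the second assertion \eqref{asy-xy2} from it by a transpose symmetry; in this way essentially all the work is concentrated in the regime $|x|>2|y|$. Throughout I work with the Gaussian representation \eqref{funda-2}, that is, with the decomposition \eqref{split-fun}, so that every integrand is a genuine Gaussian $e^{-|\cdot|^2/4s}$ and Lemma \ref{formula} applies directly. Let $(\partial_jK)$ denote the derivative of $K$ in its first (spatial) argument.

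First I reduce part (2) to part (1). Since the heat kernel is rotation invariant and $H$ in \eqref{funda-2} is assembled from $G$, $x\otimes x$ and $\mathbb I$, one has the covariance $K(Rz,t)=RK(z,t)R^T$ for every rotation $R$ together with the evenness $K(-z,t)=K(z,t)$. Using $O(-at)=O(at)^T$ and the identity $O(at)^Ty-x=O(at)^T(y-O(at)x)$, a direct manipulation of \eqref{fundamental} yields $\Gamma_a(x,y)=\Gamma_{-a}(y,x)^T$. Granting \eqref{asy-xy} for all $a\neq 0$, I apply it with $a,x,y$ replaced by $-a,y,x$ (so that the valid range becomes $|y|>2|x|$), take the transpose, and use $(y^\perp\otimes x^\perp)^T=x^\perp\otimes y^\perp$ together with $|-a|=|a|$ to obtain exactly \eqref{asy-xy2}. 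Hence it suffices to prove \eqref{asy-xy}.

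The decisive structural point is that at $y=0$ the exponent $|O(at)x|^2=|x|^2$ is independent of $t$, so I expand $K(O(at)x-y,t)=K(O(at)x,t)-\sum_j y_j(\partial_jK)(O(at)x,t)+\mathcal R$ with the second-order integral remainder $\mathcal R=\sum_{j,k}y_jy_k\int_0^1(1-\theta)(\partial_j\partial_kK)(O(at)x-\theta y,t)\,d\theta$. The zeroth-order term $\Gamma_a(x,0)$ has a $t$-independent Gaussian exponent and is therefore controlled by \eqref{osc-1}--\eqref{osc-2} with $r=|x|/2$, giving $|\Gamma_a(x,0)|\le C|a|^{-1}|x|^{-2}$, which is absorbed into the right-hand side of \eqref{asy-xy}. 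For the remainder I use that $|x|>2|y|$ forces $|O(at)x-\theta y|\ge |x|/2$ for all $t\in(0,\infty)$ and $\theta\in[0,1]$; replacing the exponent by this $t$-uniform lower bound and integrating the second-derivative kernel bounds (of the type \eqref{K-grad}) against \eqref{exa-1}--\eqref{exa-2} yields $\int_0^\infty|(\partial_j\partial_kK)(O(at)x-\theta y,t)|\,dt\le C|x|^{-2}$, whence the remainder contributes at most $C|y|^2|x|^{-2}$, with no factor $|a|^{-1}$.

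The heart of the argument, and the step I expect to be the main obstacle, is the first-order term. Using $(\partial_jG)(z,t)=-\frac{z_j}{2t}G(z,t)$ and the analogous formulas for $\partial_jH$ read off from \eqref{funda-2}, the first-order integrands, after multiplication on the left by $O(at)^T$, become polynomials in $\cos at,\sin at$ times $e^{-|x|^2/4t}$ over powers of $t$ and $s$. Writing these products through $\cos^2,\sin^2,\sin\cos$, I split each into its mean value and a purely oscillatory part at frequency $2a$. The oscillatory part is estimated by \eqref{osc-1}--\eqref{osc-2} (frequency $2a$, $r=|x|/2$), producing a bound $C|a|^{-1}|y||x|^{-3}\le C|a|^{-1}|x|^{-2}$ since $|y|<|x|/2$. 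The mean-value part involves only the exact, $t$-independent integrals \eqref{exa-1}--\eqref{exa-2}: a short computation shows that the $G$-contribution evaluates to $\frac{x^\perp\otimes y^\perp+x\otimes y}{4\pi|x|^2}$, while the two $H$-contributions $\Gamma_a^{11},\Gamma_a^{12}$ combine to cancel the flux-type piece $\frac{x\otimes y}{4\pi|x|^2}$, leaving precisely the stated leading term $\frac{x^\perp\otimes y^\perp}{4\pi|x|^2}$. Collecting the three orders gives \eqref{asy-xy}, and \eqref{asym-funda} is then the special case $|y|\le\rho$. The delicate part is exactly this bookkeeping: one must track the matrix products $O(at)^T(\,\cdot\,)$ and the trigonometric averaging carefully, and verify the cancellation of the $x\otimes y$ term between the heat-kernel and the $H$ parts, since any surviving $x\otimes y$ would destroy the purely rotational form of the leading profile.
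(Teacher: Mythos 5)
Your proposal is correct and follows essentially the same route as the paper: the identical transpose symmetry $\Gamma_a(x,y)=\Gamma_{-a}(y,x)^T$ reduces assertion (2) to assertion (1), and assertion (1) is proved by the same second-order Taylor expansion in $y$ about $y=0$ applied to the pieces of the decomposition \eqref{split-fun}, with the zeroth-order and oscillatory first-order parts bounded by \eqref{osc-1}--\eqref{osc-2}, the $t$-independent mean parts evaluated exactly via \eqref{exa-1}--\eqref{exa-2}, and the quadratic remainder bounded without oscillation using $|O(at)x-\theta y|\geq |x|/2$. Your claimed cancellation is precisely what the paper computes in \eqref{decay-0}, \eqref{decay-1} and \eqref{decay-2}: the $G$-part contributes $(x\otimes y+x^\perp\otimes y^\perp)/(4\pi|x|^2)$ while the two $H$-parts contribute $\bigl(-(x\otimes y)+x^\perp\otimes y^\perp\bigr)/(8\pi|x|^2)$ and $-(x\otimes y+x^\perp\otimes y^\perp)/(8\pi|x|^2)$, so the $x\otimes y$ terms cancel and the leading profile $x^\perp\otimes y^\perp/(4\pi|x|^2)$ survives (your only imprecision, that the oscillatory parts sit at frequency $2a$, is harmless since the $H$-parts also carry frequencies $a$ and $3a$, all of which the oscillation lemma handles equally well).
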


\begin{proof}
Tha latter assertion
follows from the former one because 
$\Gamma_a(x,y)=\!\Gamma_{-a}(y,x)^T$ and
$(y^\perp\otimes x^\perp)^T=x^\perp\otimes y^\perp$.
We will show \eqref{asy-xy}, which immediately implies \eqref{asym-funda}.
Let us start with $\Gamma_a^0(x,y)$ given by \eqref{split-fun}.
We use the Taylor formula with respect to $y$
around $y=0$ to see that
there is $\theta=\theta(a,t,x,y)\in (0,1)$ satisfying
\begin{equation}
\begin{split}
&\quad  e^{-|O(at)x-y|^2/4t}  \\
&=e^{-|x|^2/4t}+e^{-|x|^2/4t}\,\frac{\left(O(at)x\right)\cdot y}{2t}  \\
&\qquad +\frac{1}{2}\,e^{-|O(at)x-\theta y|^2/4t}\; y^T\,
\frac{\left(O(at)x-\theta y\right)\otimes \left(O(at)x-\theta y\right)-2t\,\mathbb I}
{4t^2}\, y.
\end{split}
\label{taylor}
\end{equation}
According to this formula, we decompose $\Gamma_a^0(x,y)$ as
\[
\Gamma_a^0(x,y)
=\Gamma_a^{01}(x)+\Gamma_a^{02}(x,y)+\Gamma_a^{03}(x,y).
\]
It follows from \eqref{osc-1} that
\begin{equation}
|\Gamma_a^{01}(x)|
=\left| \frac{1}{4\pi}\int_0^\infty O(at)^T e^{-|x|^2/4t}\,\frac{dt}{t}\right|
\leq \frac{C}{|a||x|^2}.
\label{decay-01}
\end{equation}
Since
\begin{equation}
(O(at)x)\cdot y=(x\cdot y)\cos at+(x^\perp\cdot y)\sin at
\label{from2nd}
\end{equation}
and, thereby,
\begin{equation}
\begin{split}
\left\{(O(at)x)\cdot y\right\}O(at)^T
&=\frac{1}{2}
\left( 
\begin{array}{cc}
x\cdot y & x^\perp\cdot y \\
-x^\perp\cdot y & x\cdot y
\end{array} 
\right)
+\frac{\cos 2at}{2}
\left(
\begin{array}{cc}
x\cdot y & -x^\perp\cdot y \\
x^\perp\cdot y & x\cdot y
\end{array}
\right)  \\
&\qquad  +\frac{\sin 2at}{2}
\left(
\begin{array}{cc} 
x^\perp\cdot y & x\cdot y \\
-x\cdot y & x^\perp\cdot y
\end{array}
\right),
\end{split}
\label{from-2}
\end{equation}
we have
\begin{equation}
\begin{split}
\Gamma_a^{02}(x,y)
&=
\frac{1}{16\pi}\int_0^\infty e^{-|x|^2/4t}\,\frac{dt}{t^2}
\left(
\begin{array}{cc}
x\cdot y & x^\perp\cdot y \\
-x^\perp\cdot y & x\cdot y
\end{array}
\right)
+M_a^{02}(x,y)  \\
&=\frac{1}{4\pi |x|^2}
\left( 
\begin{array}{cc}
x\cdot y & x^\perp\cdot y \\
-x^\perp\cdot y & x\cdot y
\end{array}
\right)
+M_a^{02}(x,y)
\end{split}
\label{decay-02-L}
\end{equation}
with
\begin{equation}
|M_a^{02}(x,y)|
\leq\frac{C|y|}{|a||x|^3}
\leq\frac{C}{|a||x|^2}
\label{decay-02}
\end{equation}
for $|x|>2|y|$,
which follows from \eqref{osc-1}.
Since
$e^{-|O(at)x-\theta y|^2/4t}\leq e^{-|x|^2/16t}$
for $|y|<|x|/2$,
it is easily seen that
\begin{equation}
|\Gamma_a^{03}(x,y)|
\leq C|y|^2\int_0^\infty \left( |x|^2t^{-3}+t^{-2}\right)e^{-|x|^2/16t}\,dt
= \frac{C|y|^2}{|x|^2}
\label{decay-03}
\end{equation}
without using oscillation.
Then \eqref{decay-01}, \eqref{decay-02-L},
\eqref{decay-02} and \eqref{decay-03} imply that
\begin{equation}
\left|
\Gamma_a^0(x,y)
-\frac{1}{4\pi |x|^2}
\left(
\begin{array}{cc}
x\cdot y & x^\perp\cdot y \\
-x^\perp\cdot y & x\cdot y
\end{array}
\right)
\right|
\leq C(|a|^{-1}+|y|^2)|x|^{-2}
\label{decay-0}
\end{equation}
for $|x|> 2|y|$.

We proceed to the decay structure of $\Gamma_a^1(x,y)$
given by \eqref{split-fun}.
Similarly to \eqref{taylor}, we have the formula
\begin{equation}
\begin{split}
&\quad  e^{-|O(at)x-y|^2/4s}  \\
&=e^{-|x|^2/4s}+e^{-|x|^2/4s}\,\frac{\left(O(at)x\right)\cdot y}{2s}  \\
&\qquad +\frac{1}{2}\,e^{-|O(at)x-\theta y|^2/4s}\; y^T\,
\frac{\left(O(at)x-\theta y\right)\otimes \left(O(at)x-\theta y\right)-2s\,\mathbb I}
{4s^2}\, y
\end{split}
\label{taylor2}
\end{equation}
with some $\theta=\theta(a,t,s,x,y)\in (0,1)$ and, correspondingly,
we decompose $\Gamma_a^{11}(x,y)$ given by \eqref{split-fun} as
\[
\Gamma_a^{11}(x,y)=\Gamma_a^{111}(x,y)+\Gamma_a^{112}(x,y)+\Gamma_a^{113}(x,y).
\]
We write
\begin{equation}
\begin{split}
&\quad O(at)^T [(O(at)x-y)\otimes (O(at)x-y)] \\
&=(x-O(at)^Ty)\otimes (O(at)x-y) \\
&=A_0+(\cos at)\,A_c+(\sin at)\,A_s
+\frac{\cos 2at}{2}\,\widetilde A_c
+\frac{\sin 2at}{2}\,\widetilde A_s
\end{split}
\label{A}
\end{equation}
with
\begin{equation*}
\begin{split}
&A_0=A_0(x,y)
=\frac{-3(x\otimes y)+(x^\perp\otimes y^\perp)}{2}, \\
&A_c=A_c(x,y)
=\left(
\begin{array}{cc}
x_1^2+y_1^2 & x_1x_2+y_1y_2 \\
x_1x_2+y_1y_2 & x_2^2+y_2^2 
\end{array}
\right),  \\
&A_s=A_s(x,y)
=\left(
\begin{array}{cc}
-x_1x_2+y_1y_2 & x_1^2+y_2^2 \\
-(x_2^2+y_1^2) & x_1x_2-y_1y_2 
\end{array}
\right), \\
&\widetilde A_c=\widetilde A_c(x,y)
=\left(
\begin{array}{cc}
-x\cdot y & x^\perp\cdot y \\
-x^\perp\cdot y & -x\cdot y
\end{array}
\right),   \\
&\widetilde A_s=\widetilde A_s(x,y)
=\left(
\begin{array}{cc} 
-x^\perp\cdot y & -x\cdot y \\
x\cdot y & -x^\perp\cdot y
\end{array}
\right).
\end{split}
\end{equation*}
Using \eqref{osc-2} and \eqref{exa-2}, we get
\begin{equation}
\begin{split}
\Gamma_a^{111}(x,y)
&=\frac{A_0}{16\pi}\int_0^\infty \int_t^\infty
e^{-|x|^2/4s}\,\frac{ds}{s^3}\,dt
+M_a^{111}(x,y)  \\
&=
\frac{-3(x\otimes y)+(x^\perp\otimes y^\perp)}{8\pi |x|^2}
+M_a^{111}(x,y)
\end{split}
\label{decay-11-L}
\end{equation}
with
\begin{equation}
|M_a^{111}(x,y)|\leq \frac{C}{|a||x|^2}
\label{decay-11}
\end{equation}
for $|x|>2|y|$.
Look at \eqref{from2nd} and \eqref{A} to obtain
\[
\{(O(at)x)\cdot y\}
O(at)^T [(O(at)x-y)\otimes (O(at)x-y)]
=B_0+(\mbox{remainder})
\]
with
\[
B_0=\frac{x\cdot y}{2}A_c+\frac{x^\perp\cdot y}{2}A_s 
=\frac{x\cdot y}{2}(x\otimes x)+\frac{x^\perp\cdot y}{2}(x\otimes x^\perp)
+B_1
=\frac{|x|^2(x\otimes y)}{2}+B_1
\]
which is independent of $t$,
where $B_1$ is of degree one (resp. three) 
with respect to $x$ (resp. $y$)
and the remainder consists of all terms involving
$\cos kat$ and $\sin kat$ ($1\leq k\leq 3$).
We thus find from \eqref{osc-2} and \eqref{exa-2} that
\begin{equation}
\begin{split}
\Gamma_a^{112}(x,y)
&=\frac{|x|^2(x\otimes y)}{64\pi}
\int_0^\infty \int_t^\infty e^{-|x|^2/4s}\,\frac{ds}{s^4}\, dt
+M^{112}_a(x,y)  \\
&=\frac{x\otimes y}{4\pi|x|^2}
+M^{112}_a(x,y)
\end{split}
\label{decay-12-L}
\end{equation}
with
\begin{equation}
|M^{112}_a(x,y)|
\leq \frac{C\,(|a|^{-1}|y|+|y|^3)}{|x|^3}
\leq \frac{C\,(|a|^{-1}+|y|^2)}{|x|^2}
\label{decay-12}
\end{equation}
for $|x|>2|y|$.
Without using oscillation, we see that
\begin{equation}
|\Gamma_a^{113}(x,y)|
\leq C|y|^2|x|^2\int_0^\infty \int_t^\infty
\left(|x|^2s^{-5}+s^{-4}\right) e^{-|x|^2/16s}\,ds\, dt
= \frac{C|y|^2}{|x|^2}
\label{decay-13}
\end{equation}
for $|x|>2|y|$.
We collect \eqref{decay-11-L}, \eqref{decay-11}, 
\eqref{decay-12-L}, \eqref{decay-12} and \eqref{decay-13} to find
\begin{equation}
\left|
\Gamma_a^{11}(x,y)
-\frac{-(x\otimes y)+(x^\perp\otimes y^\perp)}{8\pi |x|^2}
\right|
\leq C(|a|^{-1}+|y|^2)|x|^{-2}
\label{decay-1}
\end{equation}
for $|x|>2|y|$.

Finally, we decompose $\Gamma_a^{12}(x,y)$ given by \eqref{split-fun} as
\[
\Gamma_a^{12}(x,y)
=\Gamma_a^{121}(x)+\Gamma_a^{122}(x,y)+\Gamma_a^{123}(x,y)
\]
by use of \eqref{taylor2} and deduce its decay structure.
By \eqref{osc-2} we have
\begin{equation}
|\Gamma_a^{121}(x)|\leq \frac{C}{|a||x|^2}.
\label{decay-21}
\end{equation}
As in the argument for $\Gamma_a^{02}(x,y)$,
we employ \eqref{from-2} to obtain
\begin{equation}
\begin{split}
\Gamma_a^{122}(x,y)
&=
\frac{-1}{32\pi}\int_0^\infty \int_t^\infty
e^{-|x|^2/4s}\,\frac{ds}{s^3}\, dt
\left(
\begin{array}{cc}
x\cdot y & x^\perp\cdot y \\
-x^\perp\cdot y & x\cdot y
\end{array}
\right)
+M^{122}_a(x,y)  \\
&=\frac{-1}{8\pi |x|^2}
\left(
\begin{array}{cc}
x\cdot y & x^\perp\cdot y \\
-x^\perp\cdot y & x\cdot y
\end{array}
\right)
+M^{122}_a(x,y)
\end{split}
\label{decay-22-L}
\end{equation}
with
\begin{equation}
|M^{122}_a(x,y)|
\leq\frac{C|y|}{|a||x|^3}
\leq\frac{C}{|a||x|^2}
\label{decay-22}
\end{equation}
for $|x|>2|y|$.
Similarly to the argument for $\Gamma_a^{113}(x,y)$, it is seen that
\begin{equation}
|\Gamma_a^{123}(x,y)|
\leq C|y|^2\int_0^\infty \int_t^\infty
\left(|x|^2s^{-4}+s^{-3}\right)
e^{-|x|^2/16s}\,ds\, dt
= \frac{C|y|^2}{|x|^2}
\label{decay-23}
\end{equation}
for $|x|>2|y|$.
We collect \eqref{decay-21}, \eqref{decay-22-L}, \eqref{decay-22} and
\eqref{decay-23} to obtain
\begin{equation}
\left|
\Gamma_a^{12}(x,y)
-\frac{-1}{8\pi|x|^2}
\left(
\begin{array}{cc}
x\cdot y & x^\perp\cdot y \\
-x^\perp\cdot y & x\cdot y
\end{array}
\right)
\right|
\leq C(|a|^{-1}+|y|^2)|x|^{-2}
\label{decay-2}
\end{equation}
for $|x|>2|y|$.
Using the simple relation
\[
\left(
\begin{array}{cc}
x\cdot y & x^\perp\cdot y \\
-x^\perp\cdot y & x\cdot y
\end{array}
\right)
=x\otimes y+x^\perp\otimes y^\perp,
\]
we gather \eqref{decay-0}, \eqref{decay-1} and \eqref{decay-2}
to conclude \eqref{asym-funda}.
The proof is complete.
\end{proof}

We next verify that \eqref{fundamental} can be actually the fundamental solution.
To this end, we need two lemmas.
\begin{lemma}
Let $f\in L^1(\mathbb R^2)\cap L^\infty(\mathbb R^2)$ and
\begin{equation}
p(x)=\int_{\mathbb R^2}Q(x-y)\cdot f(y)\,dy,
\label{vol-p}
\end{equation}
where $Q(x)$ is given by \eqref{pre-funda}.
Set 
\begin{equation}
v^0(x,t)
=O(at)^T \int_{\mathbb R^2}G(O(at)x-y, t)f(y)\, dy,
\label{evolu-0}
\end{equation}
\begin{equation}
v^1(x,t)
=O(at)^T \int_{\mathbb R^2}H(O(at)x-y, t)f(y)\, dy,
\label{evolu-1}
\end{equation}
where $H(x,t)$ is given by \eqref{funda-2}.
Then they respectively satisfy
\begin{equation}
\partial_tv^0+L_a v^0=0, \qquad v^0(\cdot,0)=f,
\label{cp-0}
\end{equation}
\begin{equation}
\partial_tv^1+L_a v^1=0, \qquad v^1(\cdot,0)=-\nabla p,
\label{cp-1}
\end{equation}
in $\mathbb R^2\times (0,\infty)$, where
\begin{equation}
L_a v:=-\Delta v-a\left(x^\perp\cdot\nabla v-v^\perp\right).
\label{diff-op}
\end{equation}
\label{unsteady}
\end{lemma}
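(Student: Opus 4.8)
The plan is to isolate a single \emph{transformation principle} and then read off both assertions from it: \emph{if a vector field $w(z,t)$ solves the heat equation $\partial_t w=\Delta w$ componentwise on $\mathbb R^2\times(0,\infty)$, then $v(x,t):=O(at)^Tw(O(at)x,t)$ solves $\partial_tv+L_av=0$}, with $L_a$ as in \eqref{diff-op}. To prove this I would write $J$ for the matrix with $Jx=x^\perp$, so that $\dot O(t)=O(t)J=JO(t)$ and $J^T=-J$, and set $\widetilde w(x,t):=w(O(at)x,t)$. Using orthogonality of $O(at)$ — hence rotation invariance of $\Delta$ together with the identity $(\nabla\phi)(O(at)x)=O(at)\nabla_x[\phi(O(at)x)]$ — a direct differentiation gives $\partial_t\widetilde w=\Delta\widetilde w+a\,x^\perp\cdot\nabla\widetilde w$ componentwise. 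Left-multiplying by the constant matrix $O(at)^T$ turns the right-hand side into $\Delta v+a\,x^\perp\cdot\nabla v$, while the $t$-derivative falling on the prefactor contributes $a\dot O(at)^T\widetilde w=-aJ\,v=-a\,v^\perp$. Collecting terms yields exactly $\partial_tv=\Delta v+a(x^\perp\cdot\nabla v-v^\perp)$, i.e.\ $\partial_tv+L_av=0$.

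Granting this principle, both evolution equations reduce to checking that the two kernels generate heat solutions. For $v^0$ I set $w^0(z,t)=\int_{\mathbb R^2}G(z-y,t)f(y)\,dy$; since $G$ is the heat kernel and $f\in L^1\cap L^\infty$, each component of $w^0$ solves $\partial_tw^0=\Delta w^0$ with $w^0(\cdot,0)=f$. As $v^0(x,t)=O(at)^Tw^0(O(at)x,t)$, the principle gives $\partial_tv^0+L_av^0=0$, and $O(0)=\mathbb I$ gives $v^0(\cdot,0)=f$. For $v^1$ the only new point is that the matrix kernel $H$ itself solves the heat equation componentwise: from \eqref{funda-2} one reads off $\partial_tH(x,t)=-\nabla^2G(x,t)$, whereas $\Delta_xH=\int_t^\infty\nabla^2\Delta_xG\,ds=\int_t^\infty\nabla^2\partial_sG\,ds=-\nabla^2G(x,t)$ by $\Delta G=\partial_sG$ and the decay $\nabla^2G(x,s)\to0$ as $s\to\infty$. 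Hence $\partial_tH=\Delta H$, so $w^1(z,t)=\int_{\mathbb R^2}H(z-y,t)f(y)\,dy$ solves the heat equation componentwise and the principle again yields $\partial_tv^1+L_av^1=0$.

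It remains to identify the initial value $v^1(\cdot,0)=-\nabla p$, and this is the step I expect to be delicate, because by \eqref{funda-3} one has $H(x,0)=\frac{1}{\pi|x|^2}\big(\frac{x\otimes x}{|x|^2}-\frac{\mathbb I}{2}\big)\sim|x|^{-2}$, which is not locally integrable, so the limit $t\to0^+$ must be read as a singular integral. Since $O(0)=\mathbb I$, it suffices to show $\lim_{t\to0^+}\int_{\mathbb R^2}H(x-y,t)f(y)\,dy=-\nabla p$. The clean route is the centering technique: interchanging the order of integration, $w^1(x,t)=\int_t^\infty(\nabla^2G(\cdot,s)*f)(x)\,ds$, and letting $t\to0$ gives $w^1(x,0)=\big(\int_0^\infty\nabla^2G(\cdot,s)\,ds\big)*f$. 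By \eqref{centering} the $x$-independent subtraction drops under $\nabla^2$, so $\int_0^\infty\nabla^2G(x,s)\,ds=\nabla^2\big(\frac{1}{2\pi}\log\frac1{|x|}\big)=-\nabla Q(x)$ with $Q$ as in \eqref{pre-funda}; consequently $w^1(\cdot,0)=(-\nabla Q)*f=-\nabla(Q*f)=-\nabla p$ by \eqref{vol-p}.

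The main obstacle is precisely making the previous paragraph rigorous: justifying the Fubini interchange and the passage $t\to0^+$ in spite of the non-integrable singularity of $\nabla^2G$ integrated down to $s=0$. I would handle this exactly as in Lemma \ref{loc-sum}, subtracting the worst part $\frac{e^{-1/4s}}{4\pi s}$ and invoking \eqref{centering} so that the remaining integrand becomes absolutely convergent; this simultaneously legitimizes the interchange and the limit and produces the correct principal-value contribution matching $-\nabla p$. The differentiations under the integral sign defining $v^0$ and $v^1$ for fixed $t>0$ are routine by dominated convergence, since $G(\cdot,t)$, $H(\cdot,t)$ and their $x$-derivatives are smooth and rapidly decaying and $f\in L^1\cap L^\infty$ makes every convolution absolutely convergent.
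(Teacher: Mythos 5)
Your transformation principle and your treatment of $v^0$ are correct and coincide with the paper's proof, which simply invokes the derivation \eqref{transform} of the rotating-frame equations; your observation that $H(\cdot,t)$ solves the heat equation componentwise is also correct, so the equation $\partial_t v^1+L_av^1=0$ is established along a route slightly different from (and no worse than) the paper's. The genuine problem is the step you yourself flag as the main obstacle: the initial condition $v^1(\cdot,0)=-\nabla p$. Your sketch of that step does not close the gap, and as written it would produce the wrong answer.

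The chain $w^1(\cdot,0)=\bigl(\int_0^\infty\nabla^2G(\cdot,s)\,ds\bigr)*f=(-\nabla Q)*f=-\nabla(Q*f)$ conflates a pointwise identity with a distributional one. The identity $\int_0^\infty\nabla^2G(x,s)\,ds=-\nabla Q(x)$ holds only for $x\neq 0$; the limiting kernel is homogeneous of degree $-2$, hence not locally integrable, so $(-\nabla Q)*f$ has no meaning as an absolutely convergent integral. If you read it as a principal value, the last equality fails: the distributional gradient of $Q$ carries a Dirac term,
\begin{equation*}
\partial_kQ_j=\mathrm{p.v.}\,[\partial_kQ_j]+c_{jk}\,\delta_0,\qquad
c_{jk}=\int_{|y|=1}Q_j\,y_k\,d\sigma=\tfrac12\delta_{jk},
\end{equation*}
consistent with the fact that $\mbox{div $Q$}=\delta_0$ while the pointwise kernel is trace-free. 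Hence $-\nabla p=\mathrm{p.v.}\,[-\nabla Q]*f-\tfrac12 f$, and an argument that, in your words, ``produces the correct principal-value contribution matching $-\nabla p$'' is off by exactly $\tfrac12 f$: the limit $\lim_{t\to 0}H(\cdot,t)*f$ does equal $-\nabla p$, but only because the time truncation generates this local term in addition to the principal value, and any proof must track it. Two ways to repair your route: (i) do the limit distributionally --- pair $H(\cdot,t)$ with a test function $\varphi$, move $\nabla^2$ onto $\varphi$, use $\int\nabla^2\varphi\,dx=0$ to insert the centering term of \eqref{centering} for free, and identify $\lim_{t\to0}H(\cdot,t)$ with the \emph{distributional} Hessian of $\frac{1}{2\pi}\log\frac1{|x|}$, delta term included; or (ii) check on the Fourier side that $\widehat{H(\cdot,t)*f}=-\frac{\xi\otimes\xi}{|\xi|^2}e^{-t|\xi|^2}\widehat f\to-\frac{\xi\otimes\xi}{|\xi|^2}\widehat f=\widehat{-\nabla p}$, i.e.\ $H(\cdot,t)*f$ is a composition of Riesz transforms applied to $e^{t\Delta}f$, which converges in $L^q$ by semigroup continuity. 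Note that the paper sidesteps the singular limit altogether by running the computation in the opposite direction: it \emph{defines} $v^1$ as the rotated heat evolution of the datum $-\nabla p\in L^q$ --- so \eqref{cp-1}, initial condition included, is immediate from your own transformation principle --- and then shows, for fixed $t>0$ where all integrals converge, that this expression coincides with \eqref{evolu-1}, using $Q(y)=-\int_0^\infty\nabla G(y,\tau)\,d\tau$ and the semigroup property $G(\cdot,t)*G(\cdot,\tau)=G(\cdot,t+\tau)$. That direction of argument is the cheapest fix of all.
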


\begin{proof}
The well-known estimate of singular integrals yields
$\nabla p\in L^q(\mathbb R^2)$ for every $q\in (1,\infty)$.
By the derivation \eqref{transform} of the equation \eqref{rot-ns},
it is obvious that $v^0(x,t)$ is a solution to
the Cauchy problem \eqref{cp-0},
where the initial condition is understood as
$\lim_{t\to 0}\|v^0(t)-f\|_{L^q(\mathbb R^2)}=0$
for every $q\in (1,\infty)$.
By the same reasoning,
$v^1(x,t)=(v_1^1,v_2^1)^T$ with
\begin{equation*}
\begin{split}
v^1_j(x,t)
&=-\sum_k O(at)_{kj} \int_{\mathbb R^2}G(O(at)x-y, t)\partial_k p(y)\, dy  \\
&=-\int_{\mathbb R^2}\partial_{x_j}G(O(at)x-y, t)
\int_{\mathbb R^2}Q(y-z)\cdot f(z)\,dz\, dy  \qquad (j=1,2)
\end{split}
\end{equation*}
solves \eqref{cp-1}.
Note that the integration by parts above can be justified
since $p\in L^r(\mathbb R^2)$ for every $r\in (2,\infty)$
by the Hardy-Littlewood-Sobolev inequality.
So we have only to deduce the representation \eqref{evolu-1}.
Using the relation
\[
Q(y)=\frac{y}{2\pi |y|^2}=-\int_0^\infty \nabla G(y,\tau)\, d\tau
\]
and the semigroup property of the heat kernel, we find
\begin{equation*}
\begin{split}
v^1_j(x,t)
&=\int_{\mathbb R^2}\sum_m 
\int_0^\infty \int_{\mathbb R^2}
\partial_{x_j}G(O(at)x-y, t)(\partial_m G)(y-z,\tau)\,dy\,
d\tau\, f_m(z)\,dz  \\
&=-\int_{\mathbb R^2}\sum_m
\int_0^\infty \partial_{x_j}\partial_{z_m} G(O(at)x-z,t+\tau)\,d\tau\, f_m(z)\,dz  \\
&=\int_{\mathbb R^2}\sum_{k,m}O(at)_{kj}\int_t^\infty
(\partial_k \partial_mG)(O(at)x-z,s)\,ds\, f_m(z)\, dz
\end{split}
\end{equation*}
which leads us to \eqref{evolu-1}.
\end{proof}

\begin{lemma}
Let $\varepsilon \geq 0$.
Let $U\in{\cal S}^\prime(\mathbb R^2)$ fulfill
\[
\varepsilon U-\Delta U-a\,x^\perp\cdot\nabla U=0 \qquad\mbox{in $\mathbb R^2$},
\]
where ${\cal S}^\prime$ is the class of tempered distributions.
Then $\mbox{\emph{Supp} $\widehat U$}\subset\{0\}$,
where $\widehat U$ denotes the Fourier transform of $U$.
Similarly, if $u\in {\cal S}^\prime(\mathbb R^2)$ and
$p\in{\cal S}^\prime(\mathbb R^2)$ satisfy
\begin{equation}
\varepsilon u-\Delta u-a\big(x^\perp\cdot\nabla u-u^\perp\big)+\nabla p=0, \qquad
\mbox{\emph{div} $u$}=0 \qquad\mbox{in $\mathbb R^2$},
\label{rot-homo}
\end{equation}
then
$\mbox{\emph{Supp} $\widehat u$}\subset\{0\}$ and
$\mbox{\emph{Supp} $\widehat p$}\subset\{0\}$.
\label{f-supp}
\end{lemma}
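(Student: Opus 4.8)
The plan is to pass to the Fourier side, where the operator $x^\perp\cdot\nabla$ becomes the angular derivative, so that the equation turns into a first-order transport equation whose characteristics are the circles about the origin; the periodicity of these characteristics is what will force $\widehat U$ (resp. $\widehat u,\widehat p$) to vanish away from $0$. For the scalar equation I would first record the key identity that, since $x^\perp\cdot\nabla$ is the infinitesimal generator of the rotation group and the Fourier transform commutes with rotations, one has $\widehat{x^\perp\cdot\nabla U}=\xi^\perp\cdot\nabla_\xi\widehat U$ (valid in $\mathcal S'$ because $x^\perp\cdot\nabla$ maps $\mathcal S$ to $\mathcal S$). Thus the equation becomes $(\varepsilon+|\xi|^2)\widehat U=a\,\xi^\perp\cdot\nabla_\xi\widehat U$, and writing $\xi=r(\cos\phi,\sin\phi)$ with $\xi^\perp\cdot\nabla_\xi=\partial_\phi$ this reads, on $\mathbb R^2\setminus\{0\}$,
\[
a\,\partial_\phi\widehat U=(\varepsilon+r^2)\,\widehat U .
\]

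Next I would run a localized angular-Fourier-series argument. Fix a radial cutoff $\psi=\psi(r)\in C_0^\infty((0,\infty))$ and regard $\psi\widehat U$ as a compactly supported distribution on the annulus $\{r\in\mbox{supp}\,\psi\}$; expand it as $\psi\widehat U=\sum_n c_n(r)\,e^{in\phi}$ with distributional coefficients $c_n$ in $r$. Since $\psi$ is radial it commutes with $\partial_\phi$, and matching angular modes in $a\,\partial_\phi(\psi\widehat U)=(\varepsilon+r^2)(\psi\widehat U)$ gives $(ian-\varepsilon-r^2)\,c_n=0$ for every $n$. On $\mbox{supp}\,\psi$ one has $\varepsilon+r^2>0$, so the smooth coefficient $ian-\varepsilon-r^2$ (real part $-\varepsilon-r^2<0$) never vanishes there; dividing by it yields $c_n=0$, hence $\psi\widehat U=0$. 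Letting $\psi$ exhaust $(0,\infty)$ gives $\widehat U=0$ on $\mathbb R^2\setminus\{0\}$, that is, $\mbox{Supp}\,\widehat U\subset\{0\}$.

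For the system I would first recover the pressure: taking the divergence of $\mbox{\eqref{rot-homo}}_1$ and using $\mbox{div}\,u=0$ together with \eqref{solenoidal}, every term except $\Delta p$ cancels, so $\Delta p=0$; on the Fourier side $|\xi|^2\widehat p=0$, whence $\mbox{Supp}\,\widehat p\subset\{0\}$. Substituting back, the pressure contribution $i\xi\widehat p$ is then supported at the origin, so on $\mathbb R^2\setminus\{0\}$ the velocity satisfies $(\varepsilon+|\xi|^2)\widehat u-a\,\xi^\perp\cdot\nabla_\xi\widehat u+a\,\widehat u^{\,\perp}=0$, using $\widehat{u^\perp}=\widehat u^{\,\perp}$. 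The extra zeroth-order term couples the two components, but the complex combination $w=\widehat u_1+i\widehat u_2$ decouples it into $a\,\partial_\phi w=(\varepsilon+r^2+ia)\,w$. The same annular Fourier-series argument, now with the nonvanishing coefficient $ia(n-1)-\varepsilon-r^2$, forces $w=0$, hence $\widehat u=0$ on $\mathbb R^2\setminus\{0\}$, giving $\mbox{Supp}\,\widehat u\subset\{0\}$.

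The hard part will not be the algebra but the rigor of the mode-by-mode step for \emph{distributions} rather than functions: justifying the convergent angular expansion of the compactly supported distribution $\psi\widehat U$, the termwise identification of modes after applying $\partial_\phi$ and multiplying by the smooth radial factor $\varepsilon+r^2$, and the division by the nonvanishing coefficient. I would also stress that the localization must be to a full annulus, not to a neighborhood of a single point $\xi_0\neq0$, because the characteristic through $\xi_0$ is the entire circle $\{|\xi|=|\xi_0|\}$ and it is precisely its periodicity that produces the contradiction with the exponential growth dictated by the equation.
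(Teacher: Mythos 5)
Your argument is correct in substance, but it takes a genuinely different route from the paper, and it has one small gap in the vector case, noted below. The paper proves the lemma by duality: given $\psi\in C_0^\infty(\mathbb R^2\setminus\{0\})$, it sets
\[
\Phi(\xi)=\int_0^\infty O(at)\,e^{-(\varepsilon+|\xi|^2)t}\,\psi(O(at)^T\xi)\,dt
\in C_0^\infty(\mathbb R^2\setminus\{0\}),
\]
checks that $\Phi$ solves the adjoint equation
$(\varepsilon+|\xi|^2)\Phi+a\bigl(\xi^\perp\cdot\nabla_\xi\Phi-\Phi^\perp\bigr)=\psi$,
and concludes
$\langle\widehat u,\psi\rangle
=\langle(\varepsilon+|\xi|^2)\widehat u-a(\xi^\perp\cdot\nabla_\xi\widehat u-\widehat u^\perp),\Phi\rangle
=-\langle i\xi\widehat p,\Phi\rangle=0$,
the last equality holding because $\widehat p$ is supported at the origin while $\Phi$ vanishes near it (the preliminary step $\Delta p=0$ via \eqref{solenoidal} is the same as yours). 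In other words, the paper inverts the transport operator by integrating along the rotational characteristics with the damping factor $e^{-(\varepsilon+|\xi|^2)t}$, whereas you diagonalize the same operator in angular Fourier modes and invert it mode by mode, dividing by $ian-\varepsilon-r^2$. The two mechanisms are identical at heart — expanding the paper's time integral in angular modes produces exactly your mode-by-mode division — but the presentations buy different things. The paper's construction needs no distributional Fourier series on annuli, only the elementary verification that $\Phi$ is an admissible test function; moreover the matrix factor $O(at)$ absorbs the zeroth-order coupling $u^\perp$ automatically, so scalar and vector cases are handled uniformly (in fact the paper writes out only the vector case, citing earlier work for the scalar one, while your proof covers both). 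Your version makes transparent \emph{why} the statement holds — the periodicity of the characteristics; as you rightly stress, a purely local argument must fail because $e^{(\varepsilon+r^2)\phi/a}$ solves the transport equation on any angular sector — at the cost of the technical steps you flagged, all of which are indeed standard.

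The gap: in the vector case, vanishing of $w=\widehat u_1+i\,\widehat u_2$ on $\mathbb R^2\setminus\{0\}$ does not by itself give $\widehat u=0$ there; it only gives $\widehat u_1=-i\,\widehat u_2$, and since $\widehat u$ is complex-valued you cannot discard a component. You must also run the argument for the conjugate combination $v=\widehat u_1-i\,\widehat u_2$, which by the same computation satisfies $a\,\partial_\phi v=(\varepsilon+r^2-ia)\,v$ and hence has mode coefficients $ia(n+1)-\varepsilon-r^2$, again with real part $-\varepsilon-r^2<0$ on every annulus; so $v=0$ as well, and then $\widehat u_1=\widehat u_2=0$ away from the origin. (Alternatively, for real $u$ the reality condition $\widehat u_j(-\xi)=\overline{\widehat u_j(\xi)}$ lets you deduce $v=0$ from $w=0$, but the lemma as stated does not assume $u$ real.) With this one-line addition your proof is complete.
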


\begin{proof}
We will prove the second assertion along the same idea
as in \cite{FHM}, \cite[Lemma 4.2]{H06}
(in which the first assertion was shown for the case
$\varepsilon =0$).
By \eqref{solenoidal} we have
$\Delta p=0$, so that $\mbox{Supp $\widehat p$}\subset\{0\}$
is obvious.
We take the Fourier transform of
$\mbox{\eqref{rot-homo}}_1$ to find
\[
(\varepsilon +|\xi|^2)\,\widehat u
-a\,\left(\xi^\perp\cdot\nabla_\xi\widehat u-\widehat u^\perp\right)
+i\xi\widehat p=0.
\]
Given $\psi\in C_0^\infty(\mathbb R^2\setminus\{0\})$ arbitrarily,
we set
\[
\phi(\xi)
=\int_0^\infty O(at)\,e^{-(\varepsilon +|\xi|^2)t}\psi(O(at)^T\xi)\,dt
\in C_0^\infty(\mathbb R^2\setminus\{0\}),
\]
which solves
\[
(\varepsilon +|\xi|^2)\,\phi
+a\,\left(\xi^\perp\cdot\nabla_\xi\phi-\phi^\perp\right)=\psi.
\]
We thus obtain
\begin{equation*}
\begin{split}
\langle\widehat u, \psi\rangle
&=\left\langle \widehat u,
(\varepsilon +|\xi|^2)\,\phi
+a\,\left(\xi^\perp\cdot\nabla_\xi\phi-\phi^\perp\right)\right\rangle \\
&=\left\langle 
(\varepsilon +|\xi|^2)\,\widehat u
-a\,\left(\xi^\perp\cdot\nabla_\xi\widehat u-\widehat u^\perp\right),
\phi\right\rangle  \\
&=-\langle i\xi\widehat p, \phi\rangle=0,
\end{split}
\end{equation*}
which completes the proof.
\end{proof}

The following volume potential \eqref{vol} is well-defined 
on account of \eqref{loc-sum-y}
and provides a solution to \eqref{rot-st-wh} for every
$f\in C_0^\infty(\mathbb R^2)$;
that is, $\Gamma_a(x,y)$ is a fundamental solution.
We also deduce several properties of \eqref{vol} for later use, including 
asymptotic representation \eqref{asy-u}
even for less regular $f$, whose support is not necessarily compact but
which decays sufficiently fast at infinity.
\begin{proposition}
Let $a\in \mathbb R\setminus\{0\}$.
Suppose
\begin{equation}
f\in L^1(\mathbb R^2)\cap L^\infty(\mathbb R^2).
\label{f-basic}
\end{equation}
Set
\begin{equation}
u(x)=\int_{\mathbb R^2}\Gamma_a(x,y)f(y)\,dy,
\label{vol}
\end{equation}
where $\Gamma_a(x,y)$ is given by \eqref{fundamental}, and consider
$p(x)$ defined by \eqref{vol-p} as well.

\begin{enumerate}
\item
The function $u(x)$ is well-defined by \eqref{vol}
as an element of $L^\infty_{loc}(\mathbb R^2)\cap {\cal S}^\prime(\mathbb R^2)$.

\item
Suppose further that
\begin{equation}
\int_{\mathbb R^2}|x||f(x)|\,dx <\infty, \qquad
f(x)=O(|x|^{-3}(\log |x|)^{-1}) \quad\mbox{as $|x|\to\infty$}.
\label{f-cond-1}
\end{equation}
Then the functions $u(x)$ and $p(x)$ enjoy
\begin{equation}
|u(x)|+|\nabla u(x)|+|p(x)|=O(|x|^{-1}) \quad\mbox{as $|x|\to\infty$}
\label{decay-rate}
\end{equation}
with estimate
\begin{equation}
\sup_{|x|\geq \rho}|x||u(x)|
\leq C(1+|a|^{-1})\left[\int_{\mathbb R^2}(1+|x|)|f(x)|\,dx
+\sup_{|x|\geq \rho/2}|x|^3(\log |x|)|f(x)|\right]
\label{est-u-far}
\end{equation}
for every $\rho \geq e$, 
where the constant $C>0$ is independent of $\rho\in [e,\infty)$ and
$a\in\mathbb R\setminus\{0\}$.
Furthermore, we have
\begin{equation}
p(x)
=\int_{\mathbb R^2}f\,dy \cdot\frac{x}{2\pi|x|^2}+O(|x|^{-2})
\quad\mbox{as $|x|\to\infty$}.
\label{asy-p}
\end{equation}

\item
In addition to \eqref{f-basic} and \eqref{f-cond-1}, assume \eqref{f-cond-2}.
Then we have
\begin{equation}
u(x)
=\int_{\mathbb R^2}y^\perp\cdot f\,dy\; \frac{x^\perp}{4\pi|x|^2}
+ (1+|a|^{-1})\,o(|x|^{-1})
\quad\mbox{as $|x|\to\infty$}.
\label{asy-u}
\end{equation}
If in particular the support of $f$ is compact,
then the remainder decays like $O(|x|^{-2})$ in \eqref{asy-u}.

\item 
Under the conditions \eqref{f-basic} and \eqref{f-cond-1},
the pair $\{u,p\}$ satisfies
\begin{equation}
-\Delta u-a\left(x^\perp\cdot\nabla u-u^\perp\right)+\nabla p=f, \qquad
\mbox{\emph{div} $u$}=0 \qquad\mbox{in $\mathbb R^2$}
\label{rot-st-wh}
\end{equation}
in the sense of distributions as well as
\begin{equation}
(\nabla^2u,\; \nabla p,\; x^\perp\cdot\nabla u-u^\perp)
\in L^q(\mathbb R^2) \qquad\mbox{for $\forall\, q\in (1,\infty)$},
\label{sum-property1}
\end{equation}
\begin{equation}
x^\perp\cdot\nabla u\in L^r(\mathbb R^2)
\qquad\mbox{for $\forall\, r\in (2,\infty)$}.
\label{sum-property2}
\end{equation}
If in addition $f\in C^\infty(\mathbb R^2)$, then we have
$\{u,p\}\in C^\infty(\mathbb R^2)$.

\end{enumerate}
\label{justify}
\end{proposition}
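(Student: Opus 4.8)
The plan is to reduce assertions 1--3 to a single decomposition of the integral \eqref{vol} into the three regimes $|y|\le|x|/2$, $|x|/2<|y|<2|x|$ and $|y|\ge 2|x|$, handled respectively by \eqref{asy-xy}, Lemma \ref{loc-sum} and \eqref{asy-xy2}, reserving a genuinely different argument for the differential identity \eqref{rot-st-wh} and the summability \eqref{sum-property1}. For assertion~1 I would fix $x$ and split at $|y|=|x|+e$: on the inner ball \eqref{loc-sum-y} together with $f\in L^\infty$ gives a finite, locally bounded contribution, while on $\{|y|>2|x|\}$ the bound \eqref{asy-xy2} yields $|\Gamma_a(x,y)|\le C(|x|/|y|+(|a|^{-1}+|x|^2)/|y|^2)$, integrable against $f\in L^1$; the same splitting shows at most polynomial growth, hence $u\in L^\infty_{loc}\cap\mathcal S^\prime$. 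For assertion~2 I use the three-regime split. On $|y|\le|x|/2$ the leading matrix $x^\perp\otimes y^\perp/(4\pi|x|^2)$ produces $O(|x|^{-1})$ through $\int(1+|y|)|f|\,dy$, and the remainder in \eqref{asy-xy}, after trading $|y|^2\le\tfrac{|x|}{2}|y|$, is again $\le C(1+|a|^{-1})|x|^{-1}\int(1+|y|)|f|\,dy$. On $|x|/2<|y|<2|x|$ I bound $|f|$ by the tail quantity $\sup_{|y|\ge\rho/2}|y|^3(\log|y|)|f(y)|$ and use \eqref{loc-sum-y} with $\rho=2|x|$, the factor $(|a|^{-1}+\log|x|)/\log|x|$ collapsing to $1+|a|^{-1}$; the outer region uses \eqref{asy-xy2} and the same tail bound. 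Collecting the three pieces gives both the rate in \eqref{decay-rate} and the explicit estimate \eqref{est-u-far}. The gradient bound follows identically after differentiating under the integral, legitimate by \eqref{grad-funda}, using \eqref{loc-sum-grad} and the gradient analogues of Proposition \ref{key} obtained by the same computation. Finally \eqref{asy-p} is classical: since $Q$ is $a$-independent by \eqref{solenoidal}, expanding $Q(x-y)=Q(x)+O(|y|/|x|^2)$ on $|y|\le|x|/2$ isolates $Q(x)\int f\,dy$ and leaves $O(|x|^{-2})$ thanks to $\int|y||f|\,dy<\infty$.

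Assertion~3 is the same inner-region analysis carried one order further. I would write $u(x)=\frac{x^\perp}{4\pi|x|^2}\int_{\mathbb R^2}y^\perp\cdot f\,dy+R(x)$; the discrepancy between $\int_{\mathbb R^2}$ and $\int_{|y|\le|x|/2}$ equals $\frac{x^\perp}{4\pi|x|^2}\int_{|y|>|x|/2}y^\perp\cdot f\,dy=O(|x|^{-1})\,o(1)$ because $\int|y||f|\,dy<\infty$. The inner remainder from \eqref{asy-xy} is now genuinely $o(|x|^{-1})$, since the stronger decay \eqref{f-cond-2} upgrades $\int_{|y|\le|x|/2}|y|^2|f|\,dy$ to $o(|x|/\log|x|)$, whence $|x|^{-2}$ times it is $o(|x|^{-1})$, while the $|a|^{-1}$ part is $O(|x|^{-2})$; the middle and outer regions are likewise $o(|x|^{-1})$ because the tail quantity tends to $0$ under \eqref{f-cond-2}. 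When $\mathrm{Supp}\,f$ is compact only the inner region survives for large $|x|$, and \eqref{asy-xy} gives the remainder $O(|x|^{-2})$ directly, proving \eqref{asy-u}.

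For assertion~4 the mechanism is the time integration hidden in \eqref{fundamental}. By construction $u=\int_0^\infty V\,dt$ with $V:=v^0+v^1$ from \eqref{evolu-0}--\eqref{evolu-1}, and Lemma \ref{unsteady} gives $\partial_tV+L_aV=0$ with $V(\cdot,0)=f-\nabla p$, where $L_a$ is as in \eqref{diff-op}. Testing against $\varphi\in C_0^\infty(\mathbb R^2)$ and using $\langle V,L_a^*\varphi\rangle=\langle L_aV,\varphi\rangle=-\langle\partial_tV,\varphi\rangle$ (with $L_a^*$ the formal adjoint) together with Fubini in $t$, I obtain $\langle u,L_a^*\varphi\rangle=\langle V(0),\varphi\rangle-\lim_{t\to\infty}\langle V(t),\varphi\rangle=\langle f-\nabla p,\varphi\rangle$, which is precisely $\mbox{\eqref{rot-st-wh}}_1$; the relation $\mathrm{div}\,u=0$ follows because the scalar $\mathrm{div}\,V$ solves a diffusion equation with zero initial data on account of \eqref{solenoidal}. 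For the summability, $\nabla p$ is a Calderón--Zygmund integral of $f$ (its kernel $\nabla Q$ being homogeneous of degree $-2$ with vanishing spherical mean), so $\nabla p\in L^q$ for every $q\in(1,\infty)$ since $f\in\bigcap_q L^q$; the bound $\nabla^2u\in L^q$ comes from Calderón--Zygmund theory for the kernel $\nabla_x^2\Gamma_a(x,y)$, whose defining time integral is rendered convergent by the oscillation exactly as in \eqref{grad-funda}. The equation then gives $x^\perp\cdot\nabla u-u^\perp=\tfrac1a(-\Delta u+\nabla p-f)\in L^q$, completing \eqref{sum-property1}, and since $u=O(|x|^{-1})\in L^r(\mathbb R^2)$ for $r>2$ we get $x^\perp\cdot\nabla u=(x^\perp\cdot\nabla u-u^\perp)+u^\perp\in L^r$, which is \eqref{sum-property2}. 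If $f\in C^\infty$, then $u\in W^{2,q}_{loc}$ and interior regularity for the elliptic Stokes system bootstraps to $\{u,p\}\in C^\infty$.

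I expect assertion~4 to be the crux. Two points need real care: justifying the interchange $\int_0^\infty\langle V,L_a^*\varphi\rangle\,dt=\langle\int_0^\infty V\,dt,L_a^*\varphi\rangle$ and the vanishing of $\langle V(t),\varphi\rangle$ as $t\to\infty$, both of which must exploit the oscillation $O(at)^T$ and the centering device of Lemma \ref{loc-sum}, since \eqref{fundamental} converges only conditionally; and verifying the Calderón--Zygmund conditions for the non-absolutely-convergent kernel $\nabla_x^2\Gamma_a$, which is the heaviest computation. Lemma \ref{f-supp} is in reserve to dispose of any residual ambiguity: two tempered solutions constructed this way differ by a solution of the homogeneous system whose Fourier transform is supported at the origin, hence a polynomial, which the decay \eqref{decay-rate} forces to vanish.
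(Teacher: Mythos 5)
Your treatment of assertions 1--3 is essentially the paper's own proof: the same three-regime decomposition $|y|<|x|/2$, $|x|/2\leq|y|\leq 2|x|$, $|y|>2|x|$, handled by \eqref{asy-xy}, Lemma \ref{loc-sum} with $\rho=2|x|$, and \eqref{asy-xy2} respectively, with the same trade $|y|^2\leq\frac{|x|}{2}|y|$ in the inner remainder and the same tail-quantity bound in the middle region. One caveat there: you invoke ``gradient analogues of Proposition \ref{key} obtained by the same computation,'' but Remark \ref{rem-grad} states explicitly that such sharp estimates of $\nabla_x\Gamma_a(x,y)$ are \emph{not} simple consequences of that computation and would need several more pages; the paper instead uses only the crude, non-oscillatory bounds \eqref{grad}, which suffice because only $\nabla u=O(|x|^{-1})$ is claimed. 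Your argument survives if you replace your claim by those cruder bounds, together with the distributional identification \eqref{grad-vol} which must be checked (the paper does this via absolute convergence of the relevant double integrals).

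The genuine gap is in assertion 4, in two places. First and most seriously, \eqref{sum-property1}: you propose to obtain $\nabla^2u\in L^q$ by ``verifying the Calder\'on--Zygmund conditions for the kernel $\nabla_x^2\Gamma_a(x,y)$.'' This is not a viable shortcut. The operator $f\mapsto\nabla^2 u$ here is not a convolution operator ($\Gamma_a(x,y)$ is not a function of $x-y$), the defining time integral converges only conditionally, and the first-order term $x^\perp\cdot\nabla u$ has an unbounded coefficient; establishing the $L^q$-theory for exactly this ``winding'' singular integral is the content of an entire separate paper, \cite{FHM} (with an alternative proof in \cite{GK}), and the present paper simply cites that result. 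Treating it as a heavy but routine computation is where your proof would fail. Second, your verification of $\mbox{\eqref{rot-st-wh}}_1$ writes $\langle u,L_a^*\varphi\rangle=\langle V(0),\varphi\rangle-\lim_{t\to\infty}\langle V(t),\varphi\rangle$ as if Fubini could be applied to $V=v^0+v^1$ itself; the paper stresses that this interchange does \emph{not} work for $V$, only for the centered $\widetilde v$. You do flag the centering device, but the point is not merely ``care'': after centering, $\widetilde v-v$ is a nonzero spatially constant vector, $L_a$ does not annihilate it, and the resulting extra term \eqref{zure-3} must be computed explicitly and shown to cancel against $\langle u-\widetilde u,L_{-a}\varphi\rangle$ via \eqref{zure}. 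Your outline, as written, silently drops these correction terms, so the displayed identity is not justified. (Your alternative route to $\mbox{div}\,u=0$ through the evolution equation is plausible but inherits the same interchange problem; the paper's route---take the divergence of the stationary equation, apply Lemma \ref{f-supp}, and use the decay \eqref{decay-rate}---avoids it.)
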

\begin{remark}
It is also possible to deduce
$\nabla u(x)=O(|x|^{-2})$ at infinity
by use of similar estimates of $\nabla_x\Gamma_a(x,y)$,
see \eqref{grad-funda}, to
Proposition \ref{key} (such estimates of $\nabla_x\Gamma_a(x,y)$ are not
simple consequences of Proposition \ref{key} and
one needs further several pages).
Since slower decay $\nabla u(x)=O(|x|^{-1})$ 
in \eqref{decay-rate} is enough for the proof of Theorem \ref{decay},
we postpone precise analysis of 
$\nabla_x\Gamma_a(x,y)$ until a forthcoming paper,
in which the external force $f=\mbox{\emph{div} $F$}$ with
$F(x)=O(|x|^{-2})$ will be treated by using estimates of
$\nabla_y\Gamma_a(x,y)$.
\label{rem-grad}
\end{remark}

\noindent
{\it Proof of Proposition \ref{justify}}.
Let $|x|\geq e$, then we take $\rho=2|x|$ in \eqref{loc-sum-y}
to obtain
\[
\int_{|y|\leq 2|x|}|\Gamma_a(x,y)||f(y)|\,dy
\leq C(1+|a|^{-1})\|f\|_{L^\infty(\mathbb R^2)}|x|^2\log |x| \qquad (|x|\geq e).
\]
By \eqref{asy-xy2} we also have
\begin{equation*}
\begin{split}
\int_{|y|>2|x|}|\Gamma_a(x,y)||f(y)|\,dy
&\leq C\int_{|y|>2|x|}\left(\frac{|x|}{|y|}+\frac{1}{|a||y|^2}\right)|f(y)|\,dy \\
&\leq C(1+|a|^{-1})\|f\|_{L^1(\mathbb R^2)}  \qquad (|x|\geq e).
\end{split}
\end{equation*}
When $|x|<e$, we similarly use \eqref{loc-sum-y} with $\rho=2e$ and
\eqref{asy-xy2} to find
\begin{equation}
|u(x)|
\leq\int_{|y|\leq 2e}+\int_{|y|>2e}
\leq C(1+|a|^{-1})\big(\|f\|_{L^\infty(\mathbb R^2)}+\|f\|_{L^1(\mathbb R^2)}\big)
\qquad (|x|<e).
\label{loc-bdd}
\end{equation}
We thus obtain $u\in L^\infty_{loc}(\mathbb R^2)\cap {\cal S}^\prime(\mathbb R^2)$.

We next divide \eqref{vol} into three parts:
\begin{equation*}
\begin{split}
u(x) 
&=U_1(x)+U_2(x)+U_3(x) \\ 
&:=\left(\int_{|y|<|x|/2}+\int_{|x|/2\leq |y|\leq 2|x|}+\int_{|y|>2|x|}\right) 
\Gamma_a(x,y)f(y)\,dy. 
\end{split} 
\end{equation*}
By \eqref{asy-xy} and \eqref{f-cond-1} we have
\begin{equation} 
U_1(x) 
=\frac{x^\perp}{4\pi|x|^2}
\int_{|y|<|x|/2}y^\perp\cdot f(y)\,dy+W(x)
\label{asy-u1}
\end{equation}
with
\begin{equation}
\begin{split}
|W(x)| 
&\leq C|a|^{-1}|x|^{-2}\int_{|y|<|x|/2}|f(y)|\,dy
+C|x|^{-2}\int_{|y|<|x|/2}|y|^2|f(y)|\,dy \\ 
&\leq C|a|^{-1}|x|^{-2}\|f\|_{L^1(\mathbb R^2)}
+C|x|^{-1}\int_{\mathbb R^2}|y||f(y)|\,dy.
\end{split}
\label{remain-1}
\end{equation}
The second term of the first line of \eqref{remain-1} can be estimated even by
\[
C|x|^{-2}\int_0^{|x|/2}\big(\log\,(e+r)\big)^{-1}\,dr=o(|x|^{-1}) \quad
\mbox{as $|x|\to\infty$}.
\]
Note that this holds true under weaker assumption 
$f(x)=o(|x|^{-3})$ than $\mbox{\eqref{f-cond-1}}_2$.
This together with
\[
\left|\frac{x^\perp}{4\pi |x|^2}\int_{|y|\geq |x|/2}y^\perp\cdot f(y)\,dy\right|
\leq \frac{C}{|x|}\int_{|y|\geq |x|/2}|y||f(y)|\,dy
=o(|x|^{-1})
\]
implies that
\begin{equation}
U_1(x)
=\frac{x^\perp}{4\pi |x|^2}\int_{\mathbb R^2}y^\perp\cdot f(y)\,dy
+o(|x|^{-1}) \quad\mbox{as $|x|\to\infty$}.
\label{u-asy-1}
\end{equation}
Let $|x|\geq e$, then
it follows from \eqref{loc-sum-y} with $\rho=2|x|$ and \eqref{f-cond-1} that
\begin{equation}
\begin{split}
|U_2(x)| 
&\leq\int_{|x|/2\leq |y|\leq 2|x|}
|\Gamma_a(x,y)||f(y)|\,dy  \\
&\leq C|x|^{-3}\big(\log\,\frac{|x|}{2}\big)^{-1}
\int_{|y|\leq 2|x|}|\Gamma_a(x,y)|\,dy 
\sup_{|y|\geq |x|/2}|y|^3(\log |y|)|f(y)|  \\
&\leq C(1+|a|^{-1})|x|^{-1}
\sup_{|y|\geq |x|/2}|y|^3(\log |y|)|f(y)| \qquad
(|x|\geq e).
\end{split}
\label{remain-2}
\end{equation}
Under stronger assumption \eqref{f-cond-2}, we see that
$U_2(x)=o(|x|^{-1})$ as $|x|\to\infty$.
We remark that \eqref{f-cond-2} is needed only here.
We use \eqref{asy-xy2} to find
\begin{equation}
\begin{split}
|U_3(x)| 
&\leq C\int_{|y|>2|x|}\left(\frac{|x|}{|y|}+\frac{1}{|a||y|^2}\right)|f(y)|\,dy  \\
&\leq C(|x|^{-1}+|a|^{-1}|x|^{-3})\int_{|y|>2|x|}|y||f(y)|\,dy
=o(|x|^{-1})
\end{split}
\label{remain-3}
\end{equation}
as $|x|\to\infty$.
We gather \eqref{asy-u1}, \eqref{remain-1}, \eqref{remain-2} and \eqref{remain-3} 
to conclude \eqref{est-u-far} for every $\rho \geq e$.
Then \eqref{est-u-far} with $\rho =e$
together with \eqref{loc-bdd} for $|x|<e$ yields 
\begin{equation}
\begin{split}
&\quad \sup_{x\in\mathbb R^2}(1+|x|)|u(x)|  \\
&\leq C(1+|a|^{-1})\left[\int_{\mathbb R^2}(1+|x|)|f(x)|\,dx
+\sup_{x\in\mathbb R^2}\,(1+|x|^3)\big(\log\,(e+|x|)\big)|f(x)|\right].
\end{split}
\label{est-u}
\end{equation}
Furthermore, we collect \eqref{u-asy-1}, \eqref{remain-2}
and \eqref{remain-3} to find the asymptotic representation
\eqref{asy-u} as long as \eqref{f-cond-2} is additionally imposed.
If in particular $\mbox{Supp $f$}\subset B_\rho(0)$ 
for some $\rho>0$, then $u(x)=U_1(x)$ for $|x|\geq 2\rho$.
In view of the first line of \eqref{remain-1},
we have
\[
|W(x)|\leq C(|a|^{-1}+\rho^2)|x|^{-2}\int_{|y|<\rho}|f(y)|\,dy=O(|x|^{-2}) \quad
\mbox{as $|x|\to\infty$}.
\]

To show the decay of $\nabla u(x)$, consider
\begin{equation*}
\begin{split}
V(x)&:=\int_{\mathbb R^2}\nabla_x\Gamma_a(x,y)f(y)\,dy  \\
&=\int_{|y|<|x|/2}+\int_{|x|/2\leq |y|\leq 2|x|}
+\int_{|y|>2|x|}  
=:V_1(x)+V_2(x)+V_3(x).
\end{split}
\end{equation*}
Neglecting the oscillation and using \eqref{grad-funda}--\eqref{K-grad} 
together with \eqref{exa-1}--\eqref{exa-2}, we deduce
\begin{equation}
|\nabla_x\Gamma_a(x,y)|\leq
\left\{
\begin{array}{ll}
C|x|^{-1}, \qquad &|x|>2|y|, \\
C|y|^{-1}, &|y|>2|x|.
\end{array}
\right.
\label{grad}
\end{equation}
Although they are not sharp (Remark \ref{rem-grad}), they respectively yield
\[
|V_1(x)|\leq C|x|^{-1}\|f\|_{L^1(\mathbb R^2)}
\]
and
\[
|V_3(x)|\leq C\int_{|y|>2|x|}|y|^{-1}|f(y)|\,dy
\leq C|x|^{-2}\int_{|y|>2|x|}|y||f(y)|\,dy.
\]
Let $|x|\geq e$ and use \eqref{loc-sum-grad} with $\rho=2|x|$ to find
\[
|V_2(x)|\leq C|x|^{-3}\big(\log\,\frac{|x|}{2}\big)^{-1}
\int_{|y|\leq 2|x|}|\nabla_x\Gamma(x,y)|\,dy
\leq C|x|^{-2}\big(\log\,\frac{|x|}{2}\big)^{-1}.
\]
We thus obtain
\[
|V(x)|\leq \frac{C}{|x|} \qquad (|x|\geq e).
\]
In order to conclude
$\nabla u(x)=O(|x|^{-1})$ as $|x|\to\infty$,
it suffices to show that
\begin{equation}
\nabla u=V \qquad
\mbox{in ${\cal D}^\prime(\mathbb R^2\setminus\overline{B_e(0)})$}.
\label{grad-vol}
\end{equation}
Given $\varphi\in C_0^\infty(\mathbb R^2\setminus\overline{B_e(0)})$
arbitrarily, we have
\[
\langle u, \mbox{div $\varphi$}\rangle
=\left\langle\int_{\mathbb R^2}\Gamma_a(\cdot,y)f(y)\,dy, \mbox{div $\varphi$}
\right\rangle
=\int_{\mathbb R^2}\langle\Gamma_a(\cdot,y), \mbox{div $\varphi$}
\rangle\,f(y)\,dy,
\]
in which the last equality is correct because
\[
\int_{e<|x|<M}\int_{\mathbb R^2}|\Gamma_a(x,y)||f(y)|
|\mbox{div $\varphi(x)$}|\,dy\,dx
\leq C\int_{e<|x|<M}\frac{|\mbox{div $\varphi(x)$}|}{|x|}\,dx <\infty
\]
follows from the proof of \eqref{est-u-far},
where $\mbox{Supp $\varphi$}\subset B_M(0)\setminus\overline{B_e(0)}$.
We further obtain
\[
\langle u, \mbox{div $\varphi$}\rangle
=-\int_{\mathbb R^2}\langle\nabla_x\Gamma_a(\cdot,y), \varphi\rangle\,f(y)\,dy
=-\langle V, \varphi\rangle
\]
since we have
\[
\int_{e<|x|<M}\int_{\mathbb R^2}|\nabla_x\Gamma_a(x,y)||f(y)| 
|\varphi(x)|\,dy\,dx
\leq C\int_{e<|x|<M}\frac{|\varphi(x)|}{|x|}\,dx <\infty
\]
by computation as above.
We are thus led to \eqref{grad-vol}.

We turn to the decay property of the pressure
\[
p(x)=\frac{x}{2\pi|x|^2}\cdot\int_{\mathbb R^2}f(y)\,dy +R(x),
\]
where the remainder $R(x)$ is divided into three parts:
\begin{equation*}
\begin{split}
R(x) 
&=R_1(x)+R_2(x)+R_3(x) \\   
&:=\frac{1}{2\pi} 
\left(\int_{|y|<|x|/2}+\int_{|x|/2\leq |y|\leq 2|x|}+\int_{|y|>2|x|}\right) 
\left(\frac{x-y}{|x-y|^2}-\frac{x}{|x|^2}\right)\cdot f(y)\,dy.
\end{split} 
\end{equation*}
We then observe
\[
|R_1(x)|  
\leq\frac{1}{2\pi}\int_{|y|<|x|/2}\int_0^1\frac{3|y|}{|x-ty|^2}\,dt\,|f(y)|\,dy
\leq C|x|^{-2}\int_{\mathbb R^2}|y||f(y)|\,dy
\]
and
\begin{equation*}
\begin{split} 
|R_2(x)|
&\leq C|x|^{-3}\big(\log\,\frac{|x|}{2}\big)^{-1}
\left(\int_{|y-x|\leq 3|x|}\frac{1}{|x-y|}\,dy
+\frac{1}{|x|}\int_{|y|\leq 2|x|}dy\right) \\
&=C|x|^{-2}\big(\log\,\frac{|x|}{2}\big)^{-1}
\end{split} 
\end{equation*}
as well as
\begin{equation*} 
\begin{split} 
|R_3(x)| 
&\leq\frac{1}{2\pi}\int_{|y|>2|x|}
\left(\frac{1}{|x-y|}+\frac{1}{|x|}\right)\,|f(y)|\,dy  \\
&\leq C|x|^{-2}\int_{|y|>2|x|}|y||f(y)|\,dy=o(|x|^{-2})
\end{split}
\end{equation*}
as $|x|\to\infty$.
We thus obtain \eqref{asy-p}.

We will show that \eqref{vol} is a solution to \eqref{rot-st-wh}.
We use $v^0$ and $v^1$ given by
\eqref{evolu-0} and \eqref{evolu-1},
which satisfy \eqref{cp-0} and \eqref{cp-1}, respectively, by \eqref{f-basic}.
We set
\[
v(x,t)=v^0(x,t)+v^1(x,t), \qquad
w(x)=\int_0^\infty v(x,t)\,dt.
\]
Since neither $u$ nor $w$ can absolutely converge,
we are unable to apply the Fubini theorem directly to them.
We will show, nevertheless, that they do converge and coincide.
Let us employ the centering technique as in \eqref{tilde}.
We consider
\[
\widetilde u(x)=\int_{\mathbb R^2}\widetilde \Gamma_a(x,y)f(y)\,dy,
\]
and
\[
\widetilde v(x,t)
=O(at)^T\int_{\mathbb R^2}
\left(K(O(at)x-y,t)-\frac{e^{-1/4t}}{8\pi t}\mathbb I\right)f(y)\,dy,
\]
\[
\widetilde w(x)=\int_0^\infty \widetilde v(x,t)\,dt,
\]
where $\widetilde \Gamma_a(x,y)$ is given by \eqref{tilde}.
Then both integrals of $\widetilde u$ and $\widetilde w$
are absolutely convergent over $(0,\infty)\times \mathbb R^2$
with respect to $(t,y)$.
In fact, as in \eqref{trans}, it follows from \eqref{cen-sp}--\eqref{cen-2}
together with the assumption \eqref{f-cond-1} that
\begin{equation*}
\begin{split}
&\quad \int_0^\infty\int_{\mathbb R^2}
\left|K(O(at)x-y,t)-\frac{e^{-1/4t}}{8\pi t}\mathbb I\right||f(y)|\,dy\,dt \\
&=\int_0^\infty\int_{\mathbb R^2} 
\left|K(x-y,t)-\frac{e^{-1/4t}}{8\pi t}\mathbb I\right||f(O(at)y)|\,dy\,dt \\
&\leq C\int_{\mathbb R^2}
\frac{\left(\big|\log |x-y|\big|+1\right)}{1+|y|^3}\,dy
\end{split}
\end{equation*}
which is actually convergent.
We thus obtain $\widetilde u=\widetilde w$.
Since
\begin{equation}
u-\widetilde u
=\int_0^\infty O(at)^T e^{-1/4t}\frac{dt}{8\pi t}\int_{\mathbb R^2}f(y)\, dy
=w-\widetilde w
\label{zure}
\end{equation}
and since \eqref{zure} does converge by \eqref{subt},
we eventually conclude that $u=w$.
We now show that $\{u,p\}$ actually satisfies $\mbox{\eqref{rot-st-wh}}_1$
in the sense of distributions.
Given $\varphi\in C_0^\infty(\mathbb R^2)$ arbitrarily,
let us consider 
$\langle \widetilde u, L_{-a}\varphi\rangle$
since we have the adjoint relation $L_{-a}=L_a^*$, see \eqref{diff-op}.
Then we find
\[
\langle\widetilde u, L_{-a}\varphi\rangle
=\langle\widetilde w, L_{-a}\varphi\rangle
=\int_0^\infty\langle \widetilde v(t), L_{-a}\varphi\rangle\,dt,
\]
in which the Fubini theorem is employed.
Note that the argument does not work
if $\widetilde v$ is replaced by $v$.
By integration by parts we have
\begin{equation}
\langle\widetilde u, L_{-a}\varphi\rangle
=\int_0^\infty\langle L_a v(t), \varphi\rangle\,dt
+\int_0^\infty
\left\langle L_a\big(\widetilde v(t)-v(t)\big), \varphi\right\rangle\,dt,
\label{zure-2}
\end{equation}
however, since $\widetilde v-v$ is independent of $x$ and since
\[
\int_{\mathbb R^2}(L_{-a}\varphi)(x)\,dx
=-a\int_{\mathbb R^2}\varphi^\perp(x)\,dx,
\]
we obtain
\begin{equation}
\begin{split}
\int_0^\infty\left\langle L_a\big(\widetilde v(t)-v(t)\big), \varphi\right\rangle\,dt
&=-a(u-\widetilde u)^\perp\cdot\int_{\mathbb R^2}\varphi(x)\,dx \\
&=a(u-\widetilde u)\cdot\int_{\mathbb R^2}\varphi^\perp(x)\,dx \\
&=-\langle u-\widetilde u, L_{-a}\varphi\rangle,
\end{split}
\label{zure-3}
\end{equation}
see \eqref{zure}.
On the other hand, in view of \eqref{cp-0} and \eqref{cp-1} and by taking
\[
\lim_{t\to\infty}\;\langle v(t), \varphi\rangle =0, \qquad
\lim_{t\to 0}\;\langle v(t)-(f-\nabla p), \varphi\rangle =0,
\]
into account, we have
\begin{equation}
\int_0^\infty\langle L_a v(t), \varphi\rangle\,dt
=-\int_0^\infty\partial_t \langle v(t), \varphi\rangle\,dt
=\langle f-\nabla p, \varphi\rangle.
\label{from-IC}
\end{equation}
We collect \eqref{zure-2}, \eqref{zure-3} and \eqref{from-IC} to obtain
\[
\langle u, L_{-a}\varphi\rangle=\langle f-\nabla p, \varphi\rangle
\]
for all $\varphi\in C_0^\infty(\mathbb R^2)$.
Since $\Delta p=\mbox{div $f$}$,
we take the divergence of $\mbox{\eqref{rot-st-wh}}_1$ to see that
$(\mbox{div $u$})\in {\cal S}^\prime(\mathbb R^2)$ obeys
\[
-\Delta (\mbox{div $u$})-a\,x^\perp\cdot\nabla (\mbox{div $u$})=0
\]
on account of \eqref{solenoidal}.
By Lemma \ref{f-supp}, $\mbox{div $u$}$ is a polynomial,
however, from \eqref{decay-rate} we conclude that $\mbox{div $u$}=0$.
Since $f\in L^q(\mathbb R^2)$ for every $q\in (1,\infty)$,
the result of \cite{FHM}
(see also another proof given by \cite{GK})
implies \eqref{sum-property1}.
And then,
\eqref{est-u} combined with \eqref{sum-property1} especially
for $x^\perp\cdot\nabla u-u^\perp$
leads to \eqref{sum-property2}.
Finally, if $f\in C^\infty(\mathbb R^2)$,
then we put the term $x^\perp\cdot\nabla u-u^\perp$ in the RHS together with
such $f$ to use the regularity theory
of the usual Stokes system.
As a consequence, we find $\{u,p\}\in C^\infty(\mathbb R^2)$.
This completes the proof.
\hfill
$\Box$
\bigskip

For the proof of Theorem \ref{existence} we also need analysis of
the system
\begin{equation}
\varepsilon u-\Delta u-a\big(x^\perp\cdot\nabla u-u^\perp\big)+\nabla p=f, \qquad
\mbox{div $u$}=0 \qquad\mbox{in $\mathbb R^2$},
\label{rot-st-ep}
\end{equation}
where the term $\varepsilon u$ is introduced
in order to control the behavior of solutions at infinity.
Indeed \eqref{rot-st-ep} is the resolvent system,
but the only case we are going to consider is $\varepsilon >0$.
The velocity part of the associated fundamental solution is given by
\begin{equation}
\Gamma^{(\varepsilon)}_a(x,y)
=\int_0^\infty e^{-\varepsilon t}O(at)^TK(O(at)x-y,t)\,dt,
\label{ep-funda}
\end{equation}
while the pressure part is the same, see \eqref{pre-funda}.
Of course, \eqref{ep-funda} converges without using oscillation, however,
what we need is to derive a certain estimate uniformly with respect to 
$\varepsilon >0$.
Therefore, we still use oscillation as well as the centering technique.
\begin{proposition}
Let $a\in \mathbb R\setminus\{0\}$.
Suppose $f$ satisfies \eqref{f-basic} and \eqref{f-cond-1}.
Set
\begin{equation}
u_\varepsilon(x)=\int_{\mathbb R^2}
\Gamma^{(\varepsilon)}_a(x,y)f(y)\,dy, \qquad
\varepsilon >0,
\label{ep-vol}
\end{equation}
where $\Gamma^{(\varepsilon)}_a(x,y)$ is given by \eqref{ep-funda}.
Then $u_\varepsilon(x)$ enjoys \eqref{est-u-far}
for every $\rho\geq e$,
where the constant $C>0$ is independent of $\varepsilon >0$
(as well as $\rho\in [e,\infty)$ and $a\in \mathbb R\setminus\{0\}$).
Furthermore, the pair $\{u_\varepsilon,p\}$
is a solution to \eqref{rot-st-ep} in the sense of distributions,
where $p(x)$ given by \eqref{vol-p}.
\label{ep-decay}
\end{proposition}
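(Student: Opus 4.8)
The plan is to observe that $\Gamma_a^{(\varepsilon)}(x,y)$ differs from $\Gamma_a(x,y)$ only through the damping factor $e^{-\varepsilon t}$ inside the time integral \eqref{ep-funda}, and to re-run the entire chain Lemma \ref{formula} $\to$ Lemma \ref{loc-sum} $\to$ Proposition \ref{key} $\to$ Proposition \ref{justify}(2) with this factor inserted, taking care that every constant stays independent of $\varepsilon>0$. Once the $\varepsilon$-analogues of the decay bounds \eqref{asy-xy} and \eqref{asy-xy2} are established with $\varepsilon$-uniform constants, the decomposition $u_\varepsilon=U_1+U_2+U_3$ used in the proof of Proposition \ref{justify} reproduces \eqref{est-u-far} verbatim, with $p$ the same ($\varepsilon$-independent) pressure from \eqref{vol-p} since $\Delta p=\mbox{div }f$ is unaffected by the zeroth-order term $\varepsilon u$.

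The heart of the matter is the uniform-in-$\varepsilon$ version of the oscillation estimates \eqref{osc-1} and \eqref{osc-2}. The only role played by the oscillation $O(at)^T$ there is through integration by parts in $t$: for the damped integrand one writes $e^{-\varepsilon t}e^{iat}=e^{(ia-\varepsilon)t}$ and integrates by parts, which produces the prefactor $1/(ia-\varepsilon)$. Since $|ia-\varepsilon|=(a^2+\varepsilon^2)^{1/2}\ge|a|$, we have $|ia-\varepsilon|^{-1}\le|a|^{-1}$, so the bounds $C|a|^{-1}r^{-2m}$ survive with the same constant for every $\varepsilon>0$; the boundary terms vanish exactly as in the undamped case, and the remaining integrand, multiplied by $e^{-\varepsilon t}\le 1$, is dominated by its value at $\varepsilon=0$. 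Every oscillatory step in the proofs of Lemma \ref{loc-sum} and Proposition \ref{key} is of precisely this type (cf. \eqref{subt}, \eqref{cent-0}, \eqref{cent-12}, \eqref{decay-01}, \eqref{decay-02}), while the steps carried out \emph{without} using oscillation (such as \eqref{decay-03} and \eqref{decay-13}) only improve under the extra factor $e^{-\varepsilon t}\le 1$. Hence all the $\varepsilon$-analogues hold with constants independent of $\varepsilon$, and \eqref{est-u-far} follows for $u_\varepsilon$.

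It remains to verify that $\{u_\varepsilon,p\}$ solves \eqref{rot-st-ep} in the sense of distributions. For fixed $\varepsilon>0$ the damping renders all the time integrals absolutely convergent, so the centering device is no longer needed for convergence and the Fubini theorem applies directly. Starting from the unsteady solutions $v^0,v^1$ of Lemma \ref{unsteady} and setting $v=v^0+v^1$, $w_\varepsilon(x)=\int_0^\infty e^{-\varepsilon t}v(x,t)\,dt$, an integration by parts in $t$ converts $\int_0^\infty e^{-\varepsilon t}(\partial_t v+L_a v)\,dt=0$ into
\[
\varepsilon\,w_\varepsilon+L_a w_\varepsilon=v(\cdot,0)=f-\nabla p,
\]
the boundary contributions being supplied by the initial and decay conditions in \eqref{cp-0}--\eqref{cp-1}; identifying $w_\varepsilon=u_\varepsilon$ yields $\mbox{\eqref{rot-st-ep}}_1$ (pairing against $C_0^\infty$ test functions through the adjoint $L_{-a}=L_a^*$ if one wishes to be fully rigorous). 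Finally $\mbox{div }u_\varepsilon=0$: taking the divergence and using \eqref{solenoidal} together with $\Delta p=\mbox{div }f$ shows that $\mbox{div }u_\varepsilon$ satisfies the hypothesis of Lemma \ref{f-supp} with this $\varepsilon>0$, hence is a polynomial, and the decay just established forces it to vanish.

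The main obstacle is precisely the uniformity in $\varepsilon$. It is tempting to exploit the easy absolute convergence available for each fixed $\varepsilon>0$, but that route produces $\varepsilon$-dependent constants and is useless for the limiting argument in Theorem \ref{existence}. The single point requiring genuine care is the elementary inequality $|ia-\varepsilon|\ge|a|$, which is what makes the damping harmless — indeed mildly helpful — in the oscillation estimates; everything else is a faithful transcription of the $\varepsilon=0$ arguments already carried out in Section \ref{funda-sol}.
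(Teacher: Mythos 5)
Your proposal is correct and is essentially the paper's own proof: the uniform oscillation bound obtained by integrating $e^{(ia-\varepsilon)t}$ by parts and using $\sqrt{a^2+\varepsilon^2}\ge |a|$ is precisely the paper's estimate \eqref{ep-osc}, the resulting $\varepsilon$-uniform kernel bounds (the paper's \eqref{ep-est}) give \eqref{est-u-far} exactly as in the proof of Proposition \ref{justify}, and the verification of \eqref{rot-st-ep} exploits the absolute convergence available for fixed $\varepsilon>0$ (so no centering is needed) together with Lemma \ref{f-supp}, just as the paper does. One caveat on your middle step: re-running Proposition \ref{key} verbatim does \emph{not} yield $\varepsilon$-uniform analogues of the exact leading-term identities such as \eqref{decay-02-L}, because \eqref{exa-1}--\eqref{exa-2} fail once $e^{-\varepsilon t}$ is inserted --- the damped integrals are only \emph{bounded} by the undamped ones, and the deficit can be of the same order as the leading term itself --- so only the magnitude bounds survive; this is exactly why the paper splits the exponential into just two terms, remarking that it does not intend to identify the leading term in this proposition. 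Since \eqref{est-u-far} requires only those bounds (and likewise your final step needs only that the polynomial $\mbox{div}\,u_\varepsilon$ is killed by decay, which the paper arranges via $|\nabla u_\varepsilon(x)|=O(|x|^{-1})$ and you can arrange by testing against dilated test functions), this imprecision does not affect your conclusion.
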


\begin{proof}
Let $m>0$.
As in the proof of \eqref{osc-1}--\eqref{osc-2}
by use of integration by parts,
we easily find
\begin{equation}
\begin{split}
&\quad \left|\int_0^\infty e^{-\varepsilon t+iat}e^{-r^2/t}\,\frac{dt}{t^m}\right|
+\left|\int_0^\infty e^{-\varepsilon t+iat}
\int_t^\infty e^{-r^2/s}\,\frac{ds}{s^{m+1}}\,dt\right|  \\
&\leq\frac{C}{\sqrt{\varepsilon^2+a^2}\; r^{2m}}
\leq\frac{C}{|a|r^{2m}}
\end{split}
\label{ep-osc}
\end{equation}
with some $C=C(m)>0$ independent of
$\varepsilon \geq 0$, $r>0$ and $a\in\mathbb R\setminus\{0\}$.
Owing to \eqref{ep-osc}, we have the similar estimates to 
\eqref{loc-sum-y}, \eqref{asy-xy} and \eqref{asy-xy2}
uniformly in $\varepsilon >0$; namely, there is a constant $C>0$
independent of $\varepsilon >0$ such that
\begin{equation}
\begin{split}
&\int_{|y|\leq 2|x|}|\Gamma^{(\varepsilon)}_a(x,y)|\,dy
\leq C|a|^{-1}|x|^2+C|x|^2\log |x|, \qquad |x|\geq e, \\
&|\Gamma^{(\varepsilon)}_a(x,y)|
\leq
\left\{
\begin{array}{ll}
C|x|^{-1}|y|+C|a|^{-1}|x|^{-2}, \qquad &|x|>2|y|, \\
C|y|^{-1}|x|+C|a|^{-1}|y|^{-2}, &|y|>2|x|.
\end{array}
\right.
\end{split}
\label{ep-est}
\end{equation}
In fact, it follows from \eqref{ep-osc} that
\[
\left|\int_0^\infty e^{-\varepsilon t}O(at)^T\,\frac{e^{-1/4t}}{8\pi t}\,dt\right|
\leq\frac{C}{|a|}
\]
with some $C>0$ independent of $\varepsilon >0$,
which together with the same computing as in the proof of Lemma \ref{loc-sum}
by means of centering technique as in \eqref{tilde}
yields $\mbox{\eqref{ep-est}}_1$.
Also, look at the proof of Proposition \ref{key},
in which oscillation is used in \eqref{decay-01} and so on.
This time, we employ \eqref{ep-osc} to get 
\[
\left|\int_0^\infty e^{-\varepsilon t}O(at)^T\,e^{-|x|^2/4t}\frac{dt}{t}\right|
\leq\frac{C}{|a||x|^2}
\]
and so on, where $C>0$ is independent of $\varepsilon >0$.
The other estimates without using oscillation are obvious.
For the purpose here it is enough to split
the exponential function into two terms rather than \eqref{taylor} and
\eqref{taylor2} since we do not intend to find out the leading term.
As a consequence, we obtain $\mbox{\eqref{ep-est}}_2$.
With use of \eqref{ep-est}
the desired estimate \eqref{est-u-far} for $u_\varepsilon$ uniformly in 
$\varepsilon >0$ is deduced in exactly the same way as in 
the proof of Proposition \ref{justify}.

The proof of the latter assertion is easier than the corresponding part
(the 4th assertion) of Proposition \ref{justify},
in which we are forced to introduce $\widetilde u$.
We do not need it since $u_\varepsilon$ itself converges absolutely.
Hence, we have
\[
u_\varepsilon(x)=\int_0^\infty v_\varepsilon(x,t)\,dt
\]
with
\[
v_\varepsilon(x,t)
=e^{-\varepsilon t}O(at)^T
\int_{\mathbb R^2}K(O(at)x-y,t)f(y)\,dy,
\]
which satisfies
\[
\partial_tv_\varepsilon+(\varepsilon +L_a)v_\varepsilon=0, \qquad
v_\varepsilon(\cdot,0)=f-\nabla p
\]
in $\mathbb R^2\times (0,\infty)$, where $L_a$ is given by \eqref{diff-op}.
We thus obtain
\[
\langle u_\varepsilon, (\varepsilon +L_{-a})\varphi\rangle
=\langle f-\nabla p, \varphi\rangle
\]
for all $\varphi\in C_0^\infty(\mathbb R^2)$.
This combined with $\Delta p=\mbox{div $f$}$ implies
$\mbox{div $u_\varepsilon$}=0$ by Lemma \ref{f-supp} since
$|\nabla u_\varepsilon(x)|=O(|x|^{-1})$ as $|x|\to\infty$,
where this decay property is verified along the same line as the case 
$\varepsilon =0$ by use of \eqref{loc-sum-grad} and \eqref{grad} 
for $\nabla_x\Gamma^{(\varepsilon)}_a(x,y)$ without using oscillation.
The proof is complete.
\end{proof}

\section{Proof of Theorem \ref{decay}}
\label{proof-1}

To find the asymptotic representation \eqref{asym-rep},
it would be standard to employ a potential representation formula
in terms of the fundamental solution 
$\Gamma_a(x,y)$ as in \cite{FH1} for the 3D problem,
but we have to establish the decay properties \eqref{resolution}
in advance in order to justify such a formula.
This procedure consisting of those two steps 
would be also fine (and actually it works),
however, there is another way, 
which is straightforward and leads us directly
to \eqref{asym-rep} as well as \eqref{resolution},
by means of a cut-off technique.
We will adopt the latter way to prove Theorem \ref{decay}.
The only disadvantage compared with the former one by use of
the potential representation formula
is that the coefficient of the leading profile needs
a bit lengthy (but elementary) calculation.
\bigskip
 
\noindent
{\it Proof of Theorem \ref{decay}}.
We use a cut-off technique as mentioned above.
In order to recover the solenoidal condition by use of
the correction term with compact support, 
we first reduce the problem to the one with vanishing flux
at the boundary $\partial\Omega$.
To this end, we fix
$x_0\in\mbox{int $(\mathbb R^2\setminus\Omega)$}$ and introduce
the flux carrier
\[
w(x)=\beta\,\nabla\left(\frac{1}{2\pi}\log\frac{1}{|x-x_0|}\right)
=\frac{-\beta\,(x-x_0)}{2\pi |x-x_0|^2}, \qquad
\beta=\int_{\partial\Omega}\nu\cdot u\,d\sigma,
\]
for given smooth solution $\{u,p\}$ of \eqref{rot-stokes}.
Then we have
\[
\int_{\partial\Omega}\nu\cdot w\,d\sigma=\beta,
\]
\begin{equation}
\mbox{div $w$}=0, \qquad
\Delta w=0, \qquad
(x-x_0)^\perp\cdot\nabla w=w^\perp \qquad\mbox{in $\mathbb R^2\setminus\{x_0\}$}
\label{carr}
\end{equation}
and
\begin{equation}
\nabla^j w(x)=\nabla^j\left(\frac{-\beta x}{2\pi |x|^2}\right)
+O(|x|^{-(2+j)}) \qquad (j=0, 1)
\label{carrier}
\end{equation}
as $|x|\to\infty$.
So the pair
\[
\widetilde u=u-w, \qquad
\widetilde p=p-a\,x_0^\perp\cdot w
\]
fulfills \eqref{rot-stokes} subject to
\begin{equation}
\int_{\partial\Omega}\nu\cdot\widetilde u\,d\sigma=0,
\label{zero-fl}
\end{equation}
where we note the relation
\begin{equation}
\partial_k (x_0^\perp\cdot w)
=\sum_j(x_0^\perp)_j 
\partial_k\partial_j\left(\frac{\beta}{2\pi}\log\frac{1}{|x-x_0|}\right)
=x_0^\perp\cdot\nabla w_k  \qquad (k=1,2).
\label{car-grad}
\end{equation}

We fix $R\geq 1$ such that
$\mathbb R^2\setminus\Omega\subset B_R(0)$.
Let 
$\psi\in C_0^\infty(B_{3R}(0); [0,1])$
be a cut-off function satisfying
$\psi(x)=1$ for $|x|\leq 2R$.
By using the Bogovskii operator $B$ in the annulus
\[
A=\{x\in\mathbb R^2;\, R<|x|<3R\},
\]
see \cite{B}, \cite{BS} and \cite{Ga-b}, we set
\[
v=(1-\psi)\widetilde u +B[\widetilde u\cdot\nabla\psi],\qquad
q=(1-\psi)\widetilde p.
\]
It should be noted that
$\int_A \widetilde u\cdot\nabla\psi\,dx=0$ follows from \eqref{zero-fl}.
Then the pair $\{v,q\}$ obeys
\begin{equation}
-\Delta v-a\left(x^\perp\cdot\nabla v-v^\perp\right)+\nabla q=g+(1-\psi)f, \qquad
\mbox{div $v$}=0 \qquad\mbox{in $\mathbb R^2$}
\label{whole} 
\end{equation}
for some function $g\in C_0^\infty(\mathbb R^2)$
whose support is a compact set in $A$.
Here, we do not need any explicit form of $g$;
in fact, the important quantity \eqref{alpha-0} below can be calculated only
by taking account of the structure of the equation \eqref{whole},
that is,
$\mbox{div $S(v,q)$}=-g-(1-\psi)f$, see \eqref{new-stress}.
When
$u(x)=o(|x|)$, it is obvious that $v\in {\cal S}^\prime(\mathbb R^2)$.
Under the alternative assumption
$\nabla u\in L^r(\Omega\setminus B_R(0))$ for some $r<\infty$,
we have 
$\nabla v\in {\cal S}^\prime(\mathbb R^2)$,
which implies
$v\in {\cal S}^\prime(\mathbb R^2)$
by \cite[Proposition 1.2.1]{C}.
Going back to \eqref{whole}, we observe
$\nabla q\in {\cal S}^\prime(\mathbb R^2)$ and thereby
$q\in {\cal S}^\prime(\mathbb R^2)$, too.
Proposition \ref{justify} together with Lemma \ref{f-supp}
concludes that
\begin{equation}
\begin{split}
&v(x)=\int_{\mathbb R^2}\Gamma_a(x,y)\{g+(1-\psi)f\}(y)\,dy +P_v(x),  \\
&q(x)=\int_{\mathbb R^2}Q(x-y)\cdot \{g+(1-\psi)f\}(y)\,dy+P_q(x),
\end{split} 
\label{tempered}
\end{equation}
with some polynomials $P_v$ and $P_q$, however,
it turns out from \eqref{decay-rate} and
from either
$\nabla v\in L^r(\mathbb R^2)$ with some $r\in (1,\infty)$ or
$v(x)=o(|x|)$
that $P_v$ must be a constant vector $u_\infty$.
Thus we have
\begin{equation}
u(x)
=w(x)+\int_{\mathbb R^2}\Gamma_a(x,y)\{g+(1-\psi)f\}(y)\,dy +u_\infty \qquad 
(|x|\geq 3R),
\label{u-far}
\end{equation}
from which combined with
\eqref{asy-u} and \eqref{carrier} we obtain
\eqref{asym-rep} under the additional condition \eqref{f-cond-2}
as well as $\mbox{\eqref{resolution}}_1$,
where the coefficient
\begin{equation}
\alpha=\int_{\mathbb R^2}y^\perp\cdot \{g+(1-\psi)f\}(y)\,dy
=-\int_{\mathbb R^2} y^\perp\cdot\mbox{div $S(v,q)$}\,dy
\label{alpha-0}
\end{equation}
is computed as follows.

Set
\[
\alpha(\rho)
:=-\int_{|y|<\rho} y^\perp\cdot\mbox{div $S(v,q)$}\,dy  \qquad (\rho >3R).
\]
In view of \eqref{new-stress} we have the relation
\begin{equation}
\begin{split}
\mbox{div $\big(y^\perp\cdot S(v,q)\big)$}
&=\sum_{j,k}\partial_k\left[(y^\perp)_jS_{jk}(v,q)\right] \\
&=y^\perp\cdot\mbox{div $S(v,q)$}-S_{12}(v,q)+S_{21}(v,q) \\
&=y^\perp\cdot\mbox{div $S(v,q)$}-2a\, y\cdot v
\end{split}
\label{div-1}
\end{equation}
to find
\[
\alpha(\rho)=-\int_{|y|=\rho}
y^\perp\cdot\left(S(\widetilde u,\widetilde p)\,\frac{y}{\rho}\right)\,d\sigma
-2a\int_{|y|<\rho} y\cdot v\,dy.
\]
Since
$\mbox{div $S(\widetilde u,\widetilde p)$}=-f$ in $\Omega$,
it follows from \eqref{div-1}
in which $v$ is replaced by $\widetilde u$ that
\[
\alpha(\rho)=\int_{\partial\Omega}
y^\perp\cdot\left(S(\widetilde u,\widetilde p)\nu\right)\,d\sigma
+2a\int_{\Omega_\rho}y\cdot (\widetilde u-v)\,dy
+\int_{\Omega_\rho}y^\perp\cdot f\,dy.
\]
We are going to compute
\begin{equation*}
\begin{split}
&\quad \int_{\partial\Omega}
y^\perp\cdot\left(S(\widetilde u,\widetilde p)\nu\right)\,d\sigma  \\
&=\int_{\partial\Omega}
y^\perp\cdot\left\{\left(T(u,p)+a\,u\otimes y^\perp\right)\nu\right\}\,d\sigma  \\
&\quad -\int_{\partial\Omega}y^\perp\cdot\left((Dw)\nu\right)\,d\sigma
+a\int_{\partial\Omega}(y^\perp\cdot\nu)(x_0^\perp\cdot w)\,d\sigma  \\
&\quad -a\int_{\partial\Omega}y^\perp\cdot
\left\{(w\otimes y^\perp)\nu\right\}\,d\sigma
-a\int_{\partial\Omega}y^\perp\cdot
\left\{(y^\perp\otimes \widetilde u)\nu\right\}\,d\sigma  \\
&=:\int_{\partial\Omega}
y^\perp\cdot\left\{\left(T(u,p)+a\,u\otimes y^\perp\right)\nu\right\}\,d\sigma
+J_1+J_2+J_3+J_4.
\end{split}
\end{equation*}
We will show that
\[
J_1=0, \qquad
J_2+J_3=0, \qquad
J_4+2a\int_{\Omega_\rho}y\cdot (\widetilde u-v)\,dy=0,
\]
which concludes
\[
\alpha(\rho)
=\int_{\partial\Omega}
y^\perp\cdot\left\{\left(T(u,p)+a\,u\otimes y^\perp\right)\nu\right\}\,d\sigma 
+\int_{\Omega_\rho} y^\perp\cdot f\,dy.
\]
Letting $\rho\to\infty$ leads us to
\begin{equation}
\alpha
=\int_{\partial\Omega} 
y^\perp\cdot\left\{\left(T(u,p)+a\,u\otimes y^\perp\right)\nu\right\}\,d\sigma
+\int_\Omega y^\perp\cdot f\,dy.
\label{alpha-1}
\end{equation}
In fact, we observe
\begin{equation*}
\begin{split}
2a\int_{\Omega_\rho}y\cdot(\widetilde u-v)\,dy
&=a\int_{\Omega_\rho}
\left\{\psi\widetilde u-B[\widetilde u\cdot\nabla\psi]\right\}
\cdot\nabla |y|^2\,dy   \\
&=a\int_{\Omega_\rho}
\mbox{div}\,\big[|y|^2\left\{\psi\widetilde u-B[\widetilde u\cdot\nabla\psi]\right\}
\big]\,dy  \\
&=a\int_{\partial\Omega}|y|^2(\nu\cdot\widetilde u)\,d\sigma=-J_4
\end{split}
\end{equation*}
and
\[
J_2+J_3
=-a\int_{\partial\Omega}(y^\perp\cdot \nu)(y-x_0)^\perp\cdot w\,d\sigma=0
\]
on account of 
$(y-x_0)^\perp\cdot w(y)=0$.
We take account of $(\nabla w)^T=\nabla w$ and $\Delta w=0$ in 
$\mathbb R^2\setminus\{x_0\}$ to see that
\begin{equation*}
\begin{split}
J_1
&=-2\int_{\partial\Omega}(y-x_0)^\perp\cdot(\nu\cdot\nabla w)\,d\sigma
-2x_0^\perp\cdot\int_{\partial\Omega}\nu\cdot\nabla w\,d\sigma   \\
&=-2\int_{\partial\Omega}(y-x_0)^\perp\cdot(\nu\cdot\nabla w)\,d\sigma
+2x_0^\perp\cdot\int_{|y-x_0|=\varepsilon}
\frac{y-x_0}{\varepsilon}\cdot\nabla w\,d\sigma
\end{split}
\end{equation*}
where $\varepsilon >0$ is taken in such a way that
$\overline{B_\varepsilon(x_0)} \subset\mbox{int $(\mathbb R^2\setminus\Omega)$}$.
Using the explicit representation
\[
\nabla w(y)=\frac{-\beta}{2\pi}
\left(\frac{\mathbb I}{|y-x_0|^2}-\frac{2(y-x_0)\otimes (y-x_0)}{|y-x_0|^4}\right),
\]
we find
\begin{equation*}
\begin{split}
\int_{\partial\Omega}(y-x_0)^\perp\cdot(\nu\cdot\nabla w)\,d\sigma
&=\frac{-\beta}{2\pi}\int_{\partial\Omega}
\frac{(y-x_0)^\perp\cdot\nu}{|y-x_0|^2}\,d\sigma  \\
&=\frac{\beta}{2\pi}
\int_{\mathbb R^2\setminus \big( \Omega\cup B_\varepsilon(x_0) \big)}
\mbox{div}\,\frac{(y-x_0)^\perp}{|y-x_0|^2}\,dy
=0
\end{split}
\end{equation*}
and
\[
\int_{|y-x_0|=\varepsilon}\frac{y-x_0}{\varepsilon}\cdot\nabla w\,d\sigma
=\frac{\beta}{2\pi\varepsilon^3}\int_{|y-x_0|=\varepsilon}(y-x_0)\,d\sigma
=0
\]
which implies that $J_1=0$.
We thus obtain \eqref{alpha-1}.

Concerning the pressure, 
it follows from \eqref{car-grad} and \eqref{tempered} that
\[
\nabla p
=a\,x_0^\perp\cdot\nabla w
+\nabla\int_{\mathbb R^2}Q(x-y)\cdot\{g+(1-\psi)f\}(y)\,dy
+\nabla P_q \quad (|x|\geq 3R).
\]
By \eqref{sum-property1} together with \eqref{carrier} we know
$\nabla (p-P_q)\in L^r(\Omega)$ for every $r\in (1,\infty)$.
Since $\Delta w=0$ in $\mathbb R^2\setminus\{x_0\}$, we obtain from \eqref{u-far}
\[
\Delta u=\Delta \int_{\mathbb R^2}\Gamma_a(x,y)\{g+(1-\psi)f\}(y)\,dy \qquad 
(|x|\geq 3R),
\]
so that $\Delta u\in L^r(\Omega)$ for every $r\in (1,\infty)$
on account of \eqref{sum-property1}.
In addition, we also have
\[
x^\perp\cdot\nabla u
=x^\perp\cdot\nabla w
+x^\perp\cdot\nabla\int_{\mathbb R^2}\Gamma_a(x,y)
\{g+(1-\psi)f\}(y)\,dy \qquad
(|x|\geq 3R).
\]
It thus follows from \eqref{sum-property2} and \eqref{carrier} that
$x^\perp\cdot\nabla u\in L^r(\Omega)$ for every $r\in (2,\infty)$.
Taking those as well as 
$\mbox{\eqref{resolution}}_1$ into account,
we go back to \eqref{rot-stokes}
and let $|x|\to\infty$ to find that
$\nabla P_q=-au_\infty^\perp$.
This implies that
\[
p=a\,x_0^\perp\cdot w
+\int_{\mathbb R^2}Q(x-y)\cdot\{g+(1-\psi)f\}(y)\,dy
-a\,u_\infty^\perp\cdot x+p_\infty  \qquad (|x|\geq 3R)
\]
for some constant $p_\infty$.
By \eqref{decay-rate} together with \eqref{carrier} we obtain
$\mbox{\eqref{resolution}}_2$.
We also use \eqref{asy-p} and carry out a bit computation to obtain
\begin{equation}
\begin{split}
p(x)+au_\infty^\perp\cdot x-p_\infty
&=\Big[
\int_{\partial\Omega}\left\{
T(\widetilde u,\widetilde p)
+a\,\big(\widetilde u\otimes y^\perp-y^\perp\otimes\widetilde u\big)
\right\}\nu\,d\sigma_y  \\
&\qquad +\int_\Omega f\,dy-\beta a x_0^\perp
\Big]\cdot\frac{x}{2\pi |x|^2}
+O(|x|^{-2})
\end{split}
\label{p-repre}
\end{equation}
as $|x|\to\infty$.
We stop further computation of the coefficient, however,
we will recall \eqref{p-repre} in Theorem \ref{existence},
in which the coefficient is much simplified.

Once we have fine decay properties \eqref{resolution},
we are able to justify the energy relation \eqref{energy}.
We first verify \eqref{skew}
for smooth vector fields $u,\, v\in H^1_{loc}(\overline{\Omega})$
without assuming their decay properties at infinity.
For each $\rho >0$ large enough we have
\begin{equation}
\begin{split}
\int_{\Omega_\rho}
\left[(x^\perp\cdot\nabla u-u^\perp)\cdot v
+u\cdot(x^\perp\cdot\nabla v-v^\perp)\right]\,dx
&=\int_{\Omega_\rho} \mbox{div $[x^\perp (u\cdot v)]$}\,dx  \\
&=\int_{\partial\Omega}(\nu\cdot x^\perp)(u\cdot v)\,d\sigma
\end{split}
\label{skew-2}
\end{equation}
since $\int_{|x|=\rho}=0$.
Letting $\rho\to\infty$, we obtain \eqref{skew}.
Now, given smooth solution 
$\{u,p\}\in H^1_{loc}(\overline{\Omega})\times L^2_{loc}(\overline{\Omega})$,
we use the constants $\{u_\infty,p_\infty\}$ found above and set
\[
u_*(x)=u(x)-u_\infty, \qquad
p_*(x)=p(x)+au_\infty^\perp\cdot x-p_\infty,
\]
which satisfy
\[
-\Delta u_*-a\,(x^\perp\cdot\nabla u_*-u_*^\perp)+\nabla p_*=f, \qquad
\mbox{div $u_*$}=0 \qquad\mbox{in $\Omega$}.
\]
We multiply $u_*$, perform integration by parts over $\Omega_\rho$
and use \eqref{skew-2} to find the following two equalities,
in which the relation
$\mbox{div $T(u_*,p_*)$}=\Delta u_*-\nabla p_*$
is used for the latter:
\begin{equation*}
\begin{split}
\int_{\Omega_\rho}|\nabla u_*|^2\,dx
&=\int_{\partial\Omega_\rho}
\big((\nabla u_*-p_*\mathbb I)\nu\big)\cdot u_* \,d\sigma
+I, \\
\frac{1}{2}\int_{\Omega_\rho}|Du_*|^2\,dx
&=\int_{\partial\Omega_\rho}
\big(T(u_*,p_*)\nu\big)\cdot u_*\,d\sigma
+I,
\end{split}
\end{equation*}
where the common term $I$ is given by
\[
I=\frac{a}{2}\int_{\partial\Omega}(\nu\cdot x^\perp)|u_*|^2\,d\sigma
+\int_{\Omega_\rho}f\cdot u_*\,dx.
\]
Note that both
$T(u_*,p_*)\nu$ and $(\nabla u_*-p_*\mathbb I)\nu$
are understood as the normal trace being in
$H^{-1/2}(\partial\Omega_\rho)$.
In view of \eqref{u-far}, we see from \eqref{decay-rate} and \eqref{carrier} that
\[
\nabla u_*(x)=\nabla u(x)=O(|x|^{-1}) \quad
\mbox{as $|x|\to\infty$}.
\]
This together with \eqref{resolution} implies that
\[
\lim_{\rho\to\infty}\int_{|x|=\rho}
\left(\frac{\partial u_*}{\partial\nu}\cdot u_*-(\nu\cdot u_*)p_*\right)\,d\sigma=0,
\]
and that
\[
\lim_{\rho\to\infty}\int_{|x|=\rho}
\big(T(u_*,p_*)\nu\big)\cdot u_*\,d\sigma=0.
\]
On the other hand, we know that
$f\cdot u_*\in L^1(\Omega)$ by \eqref{f-cond} and \eqref{resolution} together with
$u\in H^1_{loc}(\overline{\Omega})\subset L^s_{loc}(\overline{\Omega})$
for all $s<\infty$.
We thus obtain not only $\nabla u\in L^2(\Omega)$ but \eqref{energy}.
This completes the proof.
\hfill
$\Box$

\section{Proof of Theorem \ref{existence}}
\label{proof-2}

{\it Proof of Theorem \ref{existence}}.
We begin with the proof of uniqueness.
Suppose $\{u,p\}$ is a solution in the sense of distributions to
\eqref{rot-stokes} with $f=0$ subject to $u=0$ on $\partial\Omega$
and $\{u,p\}\to \{0,0\}$ as $|x|\to\infty$
within the class 
$\nabla u\in L^2(\Omega)$, $\{u,p\}\in L^2_{loc}(\overline{\Omega})$.
We put the term $x^\perp\cdot\nabla u-u^\perp$ in the RHS
and use the interior regularity theory of the usual Stokes system
to show that
$u\in H^{k+1}_{loc}(\Omega)$, $p\in H^k_{loc}(\Omega)$
for every integer $k\geq 1$ by bootstrapping argument;
hence, $u,\, p\in C^\infty(\Omega)$.
By Theorem \ref{decay} we have \eqref{energy}, in which the RHS vanishes.
So, $u$ is the rigid motion, but $u=0$ on account of the boundary condition.
Going back to \eqref{rot-stokes} (with $f=0$), 
we have $\nabla p=0$, which together with 
$p\to 0$ at infinity yields $p=0$.
This proves the uniqueness.

We turn to the existence.
It is easy to find a solution with $\nabla u\in L^2(\Omega)$
by following the method of Leray,
but one cannot exclude a constant vector $u_\infty$ at infinity
even if applying Theorem \ref{decay}.
To get around this difficulty, we will adopt an approximation
procedure specified below which brings regularizing effect at infinity.
We take the auxiliary function
\begin{equation}
w(x)=\frac{a}{2}\,\nabla^\perp\left(\zeta(|x|)|x|^2\right)
=\left\{\frac{|x|}{2}\,\zeta^\prime(|x|)+\zeta(|x|)\right\}
\left(ax^\perp\right)
\label{auxi}
\end{equation}
where
$\zeta\in C^\infty([0,\infty); [0,1])$ satisfies
$\zeta(r)=1\,(r\leq R)$ and $\zeta(r)=0\,(r\geq 2R)$,
where $R\geq 1$ is fixed such that
$\mathbb R^2\setminus\Omega\subset B_R(0)$.
Then we have
\[
w|_{\partial\Omega}=ax^\perp, \quad
\mbox{div $w$}=0, \quad
x^\perp\cdot\nabla w-w^\perp
=\mbox{div $(w\otimes x^\perp-x^\perp\otimes w)$}=0.
\]
We will find a solution of the form
$u=\widetilde u+w$, where $\widetilde u$ should obey
\begin{equation}
\left\{
\begin{array}{l}
-\Delta\widetilde u
-a\left(x^\perp\cdot\nabla\widetilde u-\widetilde u^\perp\right)+\nabla p
=f+\Delta w, \qquad
\mbox{div $\widetilde u$}=0 \quad\mbox{in $\Omega$},  \medskip \\
\widetilde u|_{\partial\Omega}=0,\qquad
\widetilde u\to 0\quad\mbox{as $|x|\to\infty$}.
\end{array}
\right.
\label{modify}
\end{equation}

As Finn and Smith performed in their paper \cite{FS1}
on the Oseen system (see also Galdi \cite[Section VII.5]{Ga-b}),
for $\varepsilon \in (0,1)$, let us consider the approximate problem
\begin{equation}
\left\{
\begin{array}{l}
\varepsilon u_\varepsilon-\Delta u_\varepsilon
-a\left(x^\perp\cdot\nabla u_\varepsilon-u_\varepsilon^\perp\right)+
\nabla p_\varepsilon
=f+\Delta w, \qquad
\mbox{div $u_\varepsilon$}=0 \quad\mbox{in $\Omega$}, \medskip \\
u_\varepsilon|_{\partial\Omega}=0, \qquad
u_\varepsilon\to 0 \quad\mbox{as $|x|\to\infty$}.
\end{array}
\right.
\label{ep-ext}
\end{equation}
By $C_{0,\sigma}^\infty(\Omega)$ we denote the class of all 
solenoidal vector fields being in $C_0^\infty(\Omega)$.
Let $H_{0,\sigma}^1(\Omega)$
be the completion of $C_{0,\sigma}^\infty(\Omega)$ in $H^1(\Omega)$.
In a usual way
(see, for instance, the proof of Lemma 5.3 of \cite{H06},
in which the problem in each bounded domain $\Omega_\rho$
is first solved by means of the Lax-Milgram theorem
and then the limit $\rho\to\infty$ is considered by using a priori
estimate uniformly in $\rho$),
one can find 
$u_\varepsilon\in H_{0,\sigma}^1(\Omega)$
which satisfies
\begin{equation}
\varepsilon\,\|u_\varepsilon\|_{L^2(\Omega)}^2+
\frac{1}{2}\|\nabla u_\varepsilon\|_{L^2(\Omega)}^2
\leq\frac{1}{2}\|F+\nabla w\|_{L^2(\Omega)}^2
\label{appro}
\end{equation}
and
\[
\varepsilon\langle u_\varepsilon, \varphi\rangle
+\langle\nabla u_\varepsilon, \nabla\varphi\rangle
-a\,\langle x^\perp\cdot\nabla u_\varepsilon-u_\varepsilon^\perp, \varphi\rangle
=\langle f+\Delta w, \varphi\rangle
\]
for all $\varphi\in C_{0,\sigma}^\infty(\Omega)$.
We choose $p_\varepsilon\in L^2_{loc}(\overline{\Omega})$ such that
$\int_{\Omega_{3R}}p_\varepsilon\,dx=0$ and that the pair
$\{u_\varepsilon, p_\varepsilon\}$ satisfies 
$\mbox{\eqref{ep-ext}}_1$ in the weak sense.
Since $f+\Delta w\in C^\infty(\Omega)$,
the interior regularity theory of the usual Stokes system implies that
$u_\varepsilon,\, p_\varepsilon\in C^\infty(\Omega)$.

As in the proof of Theorem \ref{decay}, we take
the same cut-off function $\psi$
together with the Bogovskii operator $B$ in the annulus
$A=\{x\in\mathbb R^2; R<|x|<3R\}$ and set
\[
v_\varepsilon=(1-\psi) u_\varepsilon+B[u_\varepsilon\cdot\nabla\psi], \qquad
q_\varepsilon=(1-\psi) p_\varepsilon.
\]
Then the pair $\{v_\varepsilon,q_\varepsilon\}$ obeys
\begin{equation*}
\begin{split}
\varepsilon v_\varepsilon-\Delta v_\varepsilon
-a\left(x^\perp\cdot\nabla v_\varepsilon-v_\varepsilon^\perp\right)+
\nabla q_\varepsilon
&=g_\varepsilon+(1-\psi)f  \qquad\mbox{in $\mathbb R^2$}, \\
\mbox{div $v_\varepsilon$}&=0  \qquad\mbox{in $\mathbb R^2$},
\end{split}
\end{equation*}
where
\begin{equation*}
\begin{split}
g_\varepsilon
&=\varepsilon B[u_\varepsilon\cdot\nabla\psi]
+2\nabla\psi\cdot\nabla u_\varepsilon
+(\Delta\psi+ax^\perp\cdot\nabla\psi)u_\varepsilon
-\Delta B[u_\varepsilon\cdot\nabla\psi] \\
&\qquad -ax^\perp\cdot\nabla B[u_\varepsilon\cdot\nabla\psi] 
+a B[u_\varepsilon\cdot\nabla\psi]^\perp-(\nabla\psi)p_\varepsilon.
\end{split}
\end{equation*}
Here, note that $(1-\psi)\Delta w=0$ since
$\psi =1\; (|x|\leq 2R)$ and since
$\Delta w=0\; (|x|\geq 2R)$.
We use the fundamental solution \eqref{ep-funda} for the system 
\eqref{rot-st-ep}.
Then, by Proposition \ref{ep-decay} with $\rho=6R$ and
Lemma \ref{f-supp}, we find
\[
v_\varepsilon(x)
=\int_{\mathbb R^2}\Gamma_a^{(\varepsilon)}(x,y)
\left\{g_\varepsilon+(1-\psi)f\right\}(y)\,dy
\]
subject to 
\begin{equation}
\begin{split}
\sup_{|x|\geq 6R}|x||v_\varepsilon(x)|
&\leq C(1+|a|^{-1})
\Big[
\int_{\mathbb R^2}(1+|x|) 
\left|\left\{g_\varepsilon+(1-\psi)f\right\}(x)\right|\,dx  \\
&\qquad\qquad +\sup_{|x|\geq 3R}|x|^3(\log |x|)|f(x)|
\Big]
\end{split}
\label{v-est}
\end{equation}
with $C>0$ independent of $\varepsilon$.
Here, the point is that
a constant vector can be excluded since $u_\varepsilon\in L^2(\Omega)$.

By $\int_{\Omega_{3R}}p_\varepsilon\,dx=0$ and 
$\mbox{\eqref{ep-ext}}_1$ we have
\[
\|p_\varepsilon\|_{L^2(\Omega_{3R})}
\leq C_R\|\nabla p_\varepsilon\|_{H^{-1}(\Omega_{3R})}
\leq 
C_R\left(\|u_\varepsilon\|_{H^1(\Omega_{3R})}+\|F+\nabla w\|_{L^2(\Omega_{3R})}\right),
\]
where $H^{-1}(\Omega_{3R}):=H_0^1(\Omega_{3R})^*$.
This together with \eqref{appro} and the estimate
of the Bogovskii operator (\cite{B}, \cite{BS} and \cite{Ga-b}) lead us to
\begin{equation*}
\begin{split}
\int_A|g_\varepsilon(y)|\,dy
&\leq 2\sqrt 2R\|g_\varepsilon\|_{L^2(A)} \\
&\leq C_R
\left(\|u_\varepsilon\|_{H^1(\Omega_{3R})}+\|p_\varepsilon\|_{L^2(\Omega_{3R})}\right) \\
&\leq C_R\left(\|\nabla u_\varepsilon\|_{L^2(\Omega_{3R})}
+\|F\|_{L^2(\Omega_{3R})}+|a|\right) \\
&\leq C_R(\|F\|_{L^2(\Omega)}+|a|),
\end{split}
\end{equation*}
which combined with \eqref{v-est} implies that
\begin{equation}
|u_\varepsilon(x)|=|v_\varepsilon(x)|
\leq C(1+|a|^{-1})(|a|+\|F\|_{L^2(\Omega)}+[\,f\,])\,|x|^{-1} \quad
(|x|\geq 6R),
\label{far}
\end{equation}
where
\[
[\,f\,]
:=\int_{|x|\geq 2R}|x||f(x)|\,dx
+\sup_{|x|\geq 3R}|x|^3(\log |x|)|f(x)|
\]
and $C=C(R)>0$ is independent of $\varepsilon\in (0,1)$.
By \eqref{appro} we have
\[
\|u_\varepsilon\|_{L^{2,\infty}(\Omega_{6R})}
\leq C\| u_\varepsilon\|_{L^2(\Omega_{6R})}
\leq C_R\|\nabla u_\varepsilon\|_{L^2(\Omega_{6R})}
\leq C_R(\|F\|_{L^2(\Omega)}+|a|),
\]
which together with \eqref{far} yields
\[
u_\varepsilon\in L^{2,\infty}(\Omega), \qquad
\|u_\varepsilon\|_{L^{2,\infty}(\Omega)}
\leq C\left\{1+|a|+(1+|a|^{-1})\left(\|F\|_{L^2(\Omega)}+[\,f\,]\right)\right\}
\]
with $C=C(R)>0$ independent of $\varepsilon\in (0,1)$.
Hence, there is 
$\widetilde u\in L^{2,\infty}(\Omega)\cap H^1_{loc}(\overline{\Omega})$ with
$\nabla\widetilde u\in L^2(\Omega)$
such that,
as $\varepsilon\to 0$ along a subsequence,
\begin{equation*}
\begin{split}
&u_\varepsilon\to \widetilde u \quad
\mbox{weakly-star in $L^{2,\infty}(\Omega)$}, \qquad
\nabla u_\varepsilon \to \nabla\widetilde u \quad
\mbox{weakly in $L^2(\Omega)$}, \\
&u_\varepsilon\to \widetilde u \quad
\mbox{weakly in $H^1(\Omega_\rho)$}, \qquad
u_\varepsilon\to \widetilde u \quad
\mbox{strongly in $L^2(\Omega_\rho)$},
\end{split}
\end{equation*}
for every $\rho\geq R$ and, thereby,
\[
\langle\nabla\widetilde u, \nabla\varphi\rangle
-a\,\langle x^\perp\cdot\nabla\widetilde u-\widetilde u^\perp, \varphi\rangle
=\langle f+\Delta w, \varphi\rangle
\]
holds for all $\varphi\in C_{0,\sigma}^\infty(\Omega)$,
as well as $\mbox{div $\widetilde u$}=0$.
We fix $\rho$ and use the trace inequality
\[
\|u_\varepsilon-\widetilde u\|_{L^2(\partial\Omega_\rho)}
\leq C\|u_\varepsilon-\widetilde u\|_{L^2(\Omega_\rho)}^{1/2}
\|u_\varepsilon-\widetilde u\|_{H^1(\Omega_\rho)}^{1/2}
\]
to see that $\widetilde u|_{\partial\Omega}=0$.
Since 
$\Delta\widetilde u+a\,\big(x^\perp\cdot\nabla\widetilde u-\widetilde u^\perp\big)
+f+\Delta w\in H^{-1}(\Omega_\rho)$
for every $\rho\geq R$,
we find an associated pressure $p\in L^2_{loc}(\overline{\Omega})$
such that the pair $\{\widetilde u,p\}$ solves 
$\mbox{\eqref{modify}}_1$ in the weak sense.
The interior regularity theory of the Stokes system
concludes that $\{\widetilde u,p\}$ is smooth and, therefore,
so is $u:=\widetilde u+w$.
Both estimates in \eqref{weak-est} are obvious.

Let us apply Theorem \ref{decay} to $\{u,p\}$ with $\nabla u\in L^2(\Omega)$
as well as $u\in H^1_{loc}(\overline{\Omega})$,
$p\in L^2_{loc}(\overline{\Omega})$.
Since $u\in L^{2,\infty}(\Omega)$, we have all the properties in this theorem
with $u_\infty=0$.
We denote $p-p_\infty$ by the same symbol $p$ so that
$\{u,p\}$ is the desired solution.
By $u|_{\partial\Omega}=ax^\perp$ we have
\[
\beta=\int_{\partial\Omega}\nu\cdot u\,d\sigma=0, \quad
\int_{\partial\Omega}
y^\perp\cdot\left\{(u\otimes y^\perp)\nu\right\}\,d\sigma_y
=a\!\int_{\partial\Omega}(\nu\cdot y^\perp)|y|^2\,d\sigma_y=0,
\]
and thereby \eqref{asym-rep} implies \eqref{rot-u-expan}.
Finally, asymptotic representation of the pressure
is given by \eqref{p-repre}, in which
$\{u_\infty, p_\infty\}=\{0,0\}$.
Since $\beta=0$ and $u|_{\partial\Omega}=ay^\perp$, we conclude \eqref{rot-p-expan}.
The proof is complete.
\hfill
$\Box$
\bigskip

\noindent
{\em Acknowledgments.}
I would like to thank Professor M. Yamazaki for stimulating discussions.
Thanks are also due to Professor G.P. Galdi for informing me of
\cite{HW}, and to Dr. J. Guillod for sending \cite{GW} to me.
This work is partially supported by
Grant-in-Aid for Scientific Research, No. 24540169, from JSPS.

{\small

}

\begin{thebibliography}{99}
\bibitem{A}
Amick, C.J.:
On Leray's problem of steady Navier-Stokes flow past a body in the plane,
{\em Acta Math.}
{\bf 161} (1988), 71--130.

\bibitem{A2}
Amick, C.J.:
On the asymptotic form of Navier-Stokes flow past a body in the plane,
{\em J. Differential Equations}
{\bf 91} (1991), 149--167.

\bibitem{BL}
Bergh, J., L\"ofstr\"om, J.:
{\em Interpolation Spaces},
Springer, Berlin, 1976.

\bibitem{B}
Bogovski\u\i, M.E.:
Solution of the first boundary value problem for the equation of
continuity of an incompressible medium,
{\em Soviet Math. Dokl.}
{\bf 20} (1979), 1094--1098.

\bibitem{BS}
Borchers, W., Sohr, H.:
On the equations $\mbox{rot $v$}=g$ and $\mbox{div $u$}=f$
with zero boundary conditions,
{\em Hokkaido Math. J.}
{\bf 19} (1990), 67--87.

\bibitem{CF}
Chang, I-D., Finn, R.:
On the solutions of a class of equations occurring in continuum
mechanics, with application to the Stokes paradox,
{\em Arch. Rational Mech. Anal.}
{\bf 7} (1961), 388--401.

\bibitem{C}
Chemin, J.-Y.: Fluides parfaits incompressibles,
{\em Ast\'erisque}
{\bf 230}, Soci\'et\'e Math\'ematique de France, 1995.

\bibitem{FGK}
Farwig, R, Galdi, G.P., Kyed, M.:
Asymptotic structure of a Leray solution to the Navier-Stokes flow
around a rotating body,
{\em Pacific J. Math.}
{\bf 253} (2011), 367--382.

\bibitem{FH1}
Farwig, R., Hishida, T.:
Asymptotic profile of steady Stokes flow around a rotating obstacle,
{\em Manuscripta Math.}
{\bf 136} (2011), 315--338.

\bibitem{FH2}
Farwig, R., Hishida, T.:
Leading term at infinity of steady Navier-Stokes flow around a
rotating obstacle,
{\em Math. Nachr.}
{\bf 284} (2011), 2065--2077.

\bibitem{FHM}
Farwig, R., Hishida, T., M\"uller, D.:
$L^q$-theory of a singular "winding" integral operator
arising from fluid dynamics,
{\em Pacific J. Math.}
{\bf 215} (2004), 297--312.

\bibitem{Fi}
Finn, R.:
On the Stokes paradox and related questions,
{\em Nonlinear Problems},
99--115,
Univ. of Wisconsin Press, Madison, Wis., 1963.

\bibitem{FS1}
Finn, R., Smith, D.R.:
On the linearized hydrodynamical equations in two dimensions,
{\em Arch. Rational Mech. Anal.}
{\bf 25} (1967), 1--25.

\bibitem{FS2}
Finn, R., Smith, D.R.:
On the stationary solution of the Navier-Stokes equations in two dimensions,
{\em Arch. Rational. Mech. Anal.}
{\bf 25} (1967), 26--39.

\bibitem{Ga93}
Galdi, G.P.:
Existence and uniqueness at low Reynolds numbers of stationary plane
flow of a viscous fluid in exterior domains,
{\em Recent Developments in Theoretical Fluid Mechanics}
(Paseky 1992), 1--33,
{\em Pitman Research Notes on Mathematical Series}
{\bf 291}, Longman Science Technology, Harlow, 1993.

\bibitem{Ga}
Galdi, G.P.:
Mathematical questions relating to the plane steady motion of a
Navier-Stokes fluid past a body,
{\em Recent Topics on Mathematical Theory of Viscous Incompressible Fluid},
117--160, Eds. H. Kozono and Y. Shibata,
{\em Lecture Notes in Num. Appl. Anal.}
{\bf 16}, Kinokuniya, Tokyo, 1998.

\bibitem{Ga-2}
Galdi, G.P.:
On the motion of a rigid body in a viscous liquid:
a mathematical analysis with applications,
{\em Handbook of Mathematical Fluid Dynamics}, Vol. I,
653--791, Eds. S. Friedlander and D. Serre,
North-Holland, Amsterdam, 2002.

\bibitem{Ga-3}
Galdi, G.P.:
Stationary Navier-Stokes problem in a two-dimensional 
exterior domain,
{\em Handbook of Differential Equations},
Stationary Partial Differential Equations, Vol. 1,
71--155, Eds. M. Chipot and P. Quittner,
Elsevier North-Holland, 2004.

\bibitem{Ga-b}
Galdi, G.P.:
{\em An Introduction to the Mathematical Theory of the Navier-Stokes Equations},
{\em Steady-State Problems},
Second Edition, Springer, 2011.

\bibitem{GK}
Galdi, G.P., Kyed, M.:
A simple proof of $L^q$-estimates for the steady-state Oseen and Stokes
equations in a rotating frame.
Part I: strong solutions,
{\em Proc. Amer. Math. Soc.}
{\bf 141} (2013), 573--583.

\bibitem{GW1}
Gilbarg, D., Weinberger, H.F.:
Asymptotic properties of Leray's solution of the stationary two-dimensional
Navier-Stokes equations,
{\em Russian Math. Surveys}
{\bf 29} (1974), 109--123.

\bibitem{GW2}
Gilbarg, D., Weinberger, H.F.:
Asymptotic properties of steady plane solutions of the Navier-Stokes
equations with bounded Dirichlet integral,
{\em Ann. Scuola Norm. Sup. Pisa} (4)
{\bf 5} (1978), 381--404.

\bibitem{GT}
Guenther, R.B., Thomann, E.A.:
Fundamental solutions of Stokes and Oseen problem in two
spatial dimensions,
{\em J. Math. Fluid Mech.}
{\bf 9} (2007), 489--505.

\bibitem{GW}
Guillod, J., Wittwer, P.:
Asymptotic behaviour of solutions to the stationary Navier-Stokes
equations in two dimensional exterior domains
with zero velocity at infinity,
{\em Math. Models Methods Appl. Sci.}
{\bf 25} (2015), 229--253.

\bibitem{HW}
Hillairet, M., Wittwer, P.:
On the existence of solutions to the planar exterior
Navier-Stokes system,
{\em J. Differential Equations}
{\bf 255} (2013), 2996--3019.

\bibitem{H}
Hishida, T.:
An existence theorem for the Navier-Stokes flow in the exterior
of a rotating obstacle,
{\em Arch. Rational Mech. Anal.}
{\bf 150} (1999), 307--348.

\bibitem{H06}
Hishida, T.:
$L^q$ estimates of weak solutions to the stationary Stokes equations
around a rotating body,
{\em J. Math. Soc. Japan}
{\bf 58} (2006), 743--767.

\bibitem{KS}
Kozono, H., Sohr, H.:
On a new class of generalized solutions for the Stokes equations
in exterior domains,
{\em Ann. Scuola Norm. Sup. Pisa}
{\bf 19} (1992), 155--181.

\bibitem{L}
Leray, J.:
Etude de diverses equations integrales non lineaires et de quelques problemes
que pose l'Hydrodynamique,
{\em J. Math. Pures Appl.}
{\bf 12} (1933), 1--82.

\bibitem{PiRu}
Pileckas, K., Russo, R.:
On the existence of vanishing at infinity symmetric solutions
to the plane stationary exterior Navier-Stokes problem,
{\em Math. Ann.}
{\bf 352} (2012), 643--658.

\bibitem{Sm}
Smith, D.R.:
Estimates at infinity for stationary solutions of the Navier-Stokes
equations in two dimensions,
{\em Arch. Rational Mech. Anal.}
{\bf 20} (1965), 341--372.

\bibitem{Y}
Yamazaki, M.:
Unique existence of stationary solutions to the two-dimensional
Navier-Stokes equations in exterior domains,
{\em Mathematical Analysis on the Navier-Stokes Equations and Related Topics,
Past and Future},
-- In memory of Professor Tetsuro Miyakawa --,
220--241, Eds. T. Adachi et al.,
Gakuto International Series,
{\em Mathematical Sciences and Applications} 
{\bf 35}, Gakkotosho, Tokyo, 2011.
\end{thebibliography}
\end{document}